\let\csname equation*\endcsname\relax
\let\csname endequation*\endcsname\relax
\theoremstyle{plain}
\newtheorem{theorem}{Theorem}
\newtheorem*{theorem*}{Theorem}
\newtheorem{lemma}[theorem]{Lemma}
\newtheorem{proposition}[theorem]{Proposition}
\newtheorem{corollary}[theorem]{Corollary}
\newtheorem{definition}[theorem]{Definition}
\newtheorem{remark}[theorem]{Remark}
\newtheorem{remarks}[theorem]{Remarks}
\newtheorem{example}[theorem]{Example}
\newtheorem{assumptions}[theorem]{Assumptions}
\newtheorem{algorithm}[theorem]{Algorithm}
\numberwithin{theorem}{section}
\numberwithin{equation}{section}
\newcommand{\eps}{\varepsilon} 
\newcommand{\dee}{\mathrm{d}}
\newcommand{\intr}{\mathrm{int}}
\begin{document}

\makeatother

\newcommand{\op}{\mathrm{op}}
\newcommand{\post}{\mathrm{post}}

\newcommand{\mmd}{}
\newcommand{\new}{}

\title[MAP Estimators for Piecewise Continuous Inversion]{MAP Estimators for Piecewise Continuous Inversion}
\author{M M Dunlop and A M Stuart}
\address{Computing \& Mathematical Sciences, California Institute of Technology, Pasadena, CA 91125, USA}

\eads{\mailto{mdunlop@caltech.edu}, \mailto{astuart@caltech.edu}}

\begin{abstract}
We study the inverse problem of estimating a field $u^a$ from data comprising a finite set of nonlinear functionals of $u^a$, subject to additive noise; we denote this observed data by $y$. Our interest is in the reconstruction of piecewise continuous fields $u^a$ in which the discontinuity set is described by a finite number of geometric parameters $a$. Natural applications include groundwater flow and electrical impedance tomography.  We take a Bayesian approach, placing a prior distribution on $u^a$ and determining the conditional distribution on $u^a$
given the data $y$. It is then natural to study maximum a posterior (MAP)
estimators. Recently (Dashti \emph{et al} 2013 \emph{Inverse Problems} \textbf{29} 095017) it has been shown that MAP estimators can be
characterised as minimisers of a generalised Onsager-Machlup functional, in the case where the prior measure is a Gaussian random field. We extend this theory to a more general class of prior distributions which allows for piecewise continuous fields. Specifically, the prior field is assumed to be piecewise Gaussian with random interfaces between the different Gaussians defined by a finite number of parameters. We also make connections with recent work on MAP estimators for linear problems and possibly non-Gaussian priors (Helin, Burger 2015 \emph{Inverse Problems} \textbf{31} 085009) which employs the notion of Fomin derivative.

In showing applicability of our theory we focus on the groundwater flow and EIT models, though the theory holds more generally. Numerical experiments are implemented for the groundwater flow model, demonstrating the feasibility of determining MAP estimators for these piecewise continuous models, but also that the geometric formulation can lead to multiple nearby (local) MAP estimators. We relate these MAP estimators to the behaviour of output from MCMC samples of the posterior, obtained using a state-of-the-art function space Metropolis-Hastings method.
\end{abstract}

\ams{Primary: 62G05, 65N21; Secondary: 49J55}
\noindent{\it Keywords\/}: inverse problems, Bayesian approach, geometric priors, MAP estimators, EIT, groundwater flow.

\section{Introduction}
\subsection{Context and Literature Review}

A common inverse problem is that of estimating an unknown function from noisy measurements of a (possibly nonlinear) map applied to the function. Statistical and deterministic approaches to this problem have been considered extensively. In this paper we focus on the the study of MAP estimators within the Bayesian approach; these estimators provide a natural link between deterministic and statistical methods. In the Bayesian formulation, we describe the solution probabilistically and the distribution of the unknown, given the measurements and a prior model, is termed the posterior distribution. MAP estimators attempt to work with a notion of solutions of maximal probability under this posterior 
distribution and are typically characterised variationally, linking to deterministic methods.  

There are two main approaches taken to the study of the posterior. The first is to discretise the space, and then apply finite dimensional Bayesian methodology \cite{kaipio}. An advantage to this approach is the availability of a Lebesgue density and a large amount of previous work which can then be built upon; but issues may arise (for example computationally) when the dimension of the discretisation space is increased. An alternative approach is to apply infinite dimensional methodology directly on the original space, to derive algorithms, and then discretise to implement. This approach has been studied for linear problems in \cite{linear1,linear2,linear3}, and more recently for nonlinear problems \cite{lecturenotes,lasanen1,lasanen2,inverse}. It is the latter approach that we focus on in this paper.

In some situations it may be that point estimates are more desirable, or more computationally feasible, than the entire posterior distribution. A detailed study of point estimates can be found in for example\cite{point_est}. Three different estimates are commonly considered: the posterior mean which minimises $L^2$ loss, the posterior median which minimises $L^1$ loss, and posterior modes which minimise zero-one loss. The former two estimates are unique \cite{uniqueness_median}, but a distribution may possess more than one mode. A consequence of this is that the posterior mean and median may be misleading in the case of a multi-modal posterior. Posterior modes are often termed maximum a posteriori (MAP) estimators in the literature. 

In this paper we focus on MAP estimation. If the posterior has Lebesgue density $\rho$, MAP estimators are given by the global maxima of $\rho$. The problem of MAP estimation in this case is hence a deterministic variational problem, and has been well-studied \cite{kaipio}.  In the infinite-dimensional setting there is no Lebesgue density, but there has been recent research aimed at characterising the mode variationally and linking to the classical regularisation techniques described in, for example, \cite{map} in the case when Gaussian priors are adopted. Non-Gaussian priors have also been considered in the infinite dimensional setting -- in \cite{wmap} weak MAP (wMAP) estimators are defined as generalisations of MAP estimators, and a variational characterisation of them is provided in the case that the forward map is linear, using the notion of Fomin derivative.

In this paper we make a significant extension of the work in \cite{map} to include priors which are defined by a combination of Gaussian random fields and a finite number of geometric parameters which define the different domains in which the different random fields apply. We thereby study the reconstruction of piecewise continuous fields with interfaces defined by a finite number of parameters. \mmd{Our motivation for doing so comes from the work in \cite{historymatching}, and its predecessors. In that paper a Bayesian inverse problem for piecewise constant fields, modelling the permeability appearing in a two-phase subsurface flow model, was studied. \new{Such piecewise continuous fields were also previously studied in a groundwater flow context in \cite{kui}, where existence and well-posedness of the posterior distribution were shown.} The idea of single point estimates being misleading is discussed and the existence of multiple local MAP estimators is shown. We also link our work to that in \cite{wmap}, by characterizing the MAP estimator via the Fomin derivative.

Throughout this paper we focus on two model problems: groundwater flow and electrical impedance tomography (EIT). Both of these problems are important examples of large scale inverse problems, \new{with}
applications of great economic and societal value. MAP estimation in such problems has been studied previously \cite{largescale,besovmap,reservoir,frechet}. 
\mmd{However our formulation is quite general; for brevity we simply illustrate
the theory for groundwater flow and EIT, and the numerics only in the
case of groundwater flow.}

\subsection{Mathematical Setting}
\label{ssec:bg}
Let $X$ be a separable Banach space and let $\Lambda \subseteq \mathbb{R}^k$. $X$ should be thought of as a function space and $\Lambda$ a space of geometric parameters. Given $(u,a) \in X\times\Lambda$, we construct another function $u^a \in Z$, say. Considering the ingredients $u$ and $a$ in the construction of this function $u^a$ separately will be useful in what follows. 
\mmd{An example of such a construction is shown in Figure \ref{fig:construction}.}

\begin{figure}
\begin{center}
\includegraphics[width=\textwidth]{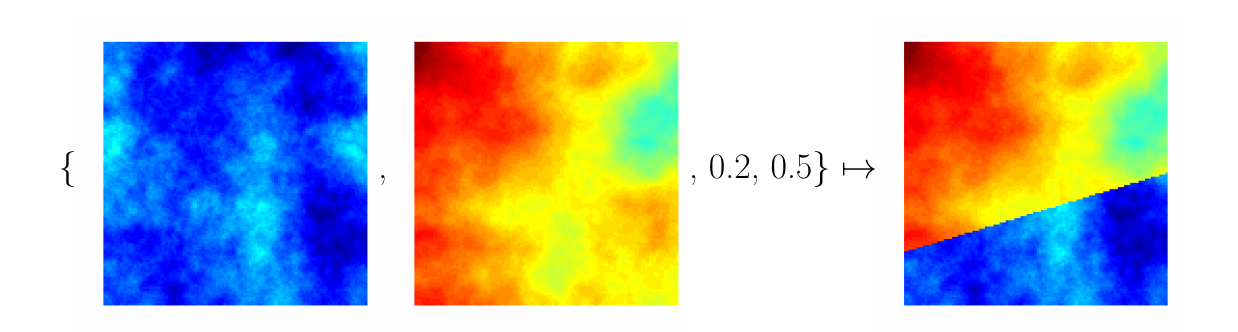}
\end{center}
\caption{\mmd{An example of construction of a piecewise continuous field, using two continuous fields and two scalar parameters. Here the scalar parameters determine the points where the interface meets each side of the domain. We work on the space of continuous fields and parameters, but it is pushforward of these by the construction map that represents the piecewise continuous field we aim to recover.}}
\label{fig:construction}
\end{figure}

Suppose we have a (typically nonlinear) forward operator $\mathcal{G}:X\times\Lambda\rightarrow Y$, where $Y = \mathbb{R}^J$. If $(u,a)$ denotes the true input to our forward problem, we observe data $y \in Y$ given by
\begin{eqnarray*}
y = \mathcal{G}(u,a) + \eta
\end{eqnarray*}
where $\eta \sim N(0,\Gamma)$, $\Gamma \in \mathbb{R}^{J\times J}$ positive definite, is some centred Gaussian noise on $Y$. Modelling everything probabilistically, we build up the joint distribution of $(u,a,y)$ by specifying a prior distribution $\mu_0\times\nu_0$ on $(u,a)$ and an independent noise model on $\eta$. We are then interested in the posterior $\mu$ on $(u,a)$ given $y$. Denote $|\cdot|$ the Euclidean norm on $\mathbb{R}^J$, and for any positive definite $A \in \mathbb{R}^{J\times J}$ denote $|\cdot|_A := |A^{-1/2}\cdot|$ the weighted norm on $\mathbb{R}^J$. Under certain conditions, using a form of Bayes' theorem, we may write $\mu$ in the form
\begin{eqnarray*}
\mu(\dee u, \dee a) \propto \exp\left(-\frac{1}{2}|\mathcal{G}(u,a)-y|_{\Gamma}^2\right)\mu_0(\dee u)\nu_0(\dee a).
\end{eqnarray*}

\mmd{The modes of the posterior distribution, termed MAP (maximum a posteriori) estimators, can be considered `best guesses' for the state $(u,a)$ given the data $y$. 
We now state rigorously what we mean by a MAP estimator for $\mu$, as in \cite{map}. Given $(u,a) \in X\times\Lambda$, denote by $B^\delta(u,a)$ the ball of radius $\delta$ centred at $(u,a)$.
\begin{definition}[MAP estimator]
For each $\delta > 0$, define
\begin{eqnarray*}
(u^\delta,a^\delta) = \underset{(u,a)\in X\times\Lambda}{\mathrm{argmax}} \mu(B^\delta(u,a)).
\end{eqnarray*}
Any point $(\bar{u},\bar{a}) \in X\times\Lambda$ satisfying
\begin{eqnarray*}
\lim_{\delta\downarrow 0}\frac{\mu(B^\delta(\bar{u},\bar{a}))}{\mu(B^\delta(u^\delta,a^\delta))} = 1
\end{eqnarray*}
is called a MAP estimator for the measure $\mu$.
\end{definition}
If this definition is applied to probability measures defined via a Lebesgue density, MAP estimators coincide with maxima of this density. Here we extend
the notion to the study of piecewise continuous fields.}

\subsection{Our Contribution}

\mmd{The primary contributions of the paper are fourfold:}

\begin{enumerate}[(i)]

\item \mmd{We develop the MAP estimator theory for infinite dimensional
geometric inverse problems involving discontinuous fields, building
on theory in both of the  recent papers \cite{map,wmap}, and opening up new avenues
for the study of MAP estimators in infinite dimensional inverse problems.
}

\item \mmd{We explicitly link MAP estimation for these geometric
inverse problems to a variational Onsager-Machlup minimization problem.
}

\item We show that the theory applies to groundwater flow model as in \cite{kui} and we show that the theory applies to the EIT problem as in \cite{eit}.

\item We implement numerical experiments for the groundwater flow model and demonstrate the feasibility of computing (local) MAP estimators within the geometric formulation, but also show that they can lead to multiple nearby solutions. We relate these multiple MAP estimators to the behaviour of output from MCMC to probe the posterior.
\end{enumerate}
\subsection{Structure of the Paper}
\begin{itemize}
\item In section \ref{sec:fwd} we describe the forward maps associated with the groundwater flow and EIT problems, and show that they have the appropriate regularity needed in sections \ref{sec:post}--\ref{sec:map}. 
\item In section \ref{sec:prior} we describe the choice of, and assumptions upon, the prior distribution whose samples comprise piecewise Gaussian random fields 
with random interfaces. 
\item In section \ref{sec:post} we show existence and uniqueness of the posterior distribution. 
\item In section \ref{sec:map} we define MAP estimators and prove their equivalence to minimisers of an appropriate Onsager-Machlup functional.
\item In section \ref{sec:num} we present numerics for the groundwater flow problem. We consider three different prior models and investigate maximisers of the posterior distribution.
\item \mmd{In section \ref{sec:concl} we conclude and outline possible future work in the area.}
\end{itemize}
\section{The Forward Problem}
\label{sec:fwd}
We consider two model problems. Our first problem (groundwater flow) is that of determining the piecewise continuous permeability of a medium, given noisy measurements of water pressure (or hydraulic head) within it. The second problem (EIT) is determination of the piecewise continuous conductivity within a body from boundary voltage measurements.

In what follows, the finite dimensional space $\Lambda$ will be a space of geometric parameters defining the interfaces between different media, and $X$ will be a product of function spaces defining the values of the permeabilities/conductivities between the interfaces.

We begin in subsection \ref{ssec:interface} by defining the construction map $(u,a)\mapsto u^a$ for the piecewise continuous fields. In subsections \ref{ssec:gwf} and \ref{ssec:eit} we describe the models for groundwater flow and EIT respectively, and prove regularity properties of the resulting forward maps; these properties are required for our subsequent theory.

\subsection{Defining the Interfaces}
\label{ssec:interface}
Let $D \subseteq \mathbb{R}^d$ be the domain of interest and let $\Lambda \subseteq \mathbb{R}^k$ be the space of geometric parameters. Take a collection of set-valued maps $A_i:\Lambda\rightarrow\mathcal{B}(D)$, $i=1,\ldots,N$ such that for each $a \in \Lambda$ we have
\begin{eqnarray*}
\bigcup_{i=1}^N{A_i(a)} = D,\;\;\;A_i(a)\cap A_j(a) = \varnothing\text{ if }i \neq j.
\end{eqnarray*}
We assume that each map $A_i$ is continuous in the sense that
\begin{eqnarray*}
|a - b|\rightarrow 0 \Rightarrow |A_i(a)\Delta A_i(b)|\rightarrow 0
\end{eqnarray*}
where $\Delta$ denotes the symmetric difference:
\begin{eqnarray*}
A\Delta B := (A\setminus B)\cup(B\setminus A).
\end{eqnarray*}

Let $X = C^0(D;\mathbb{R}^N)$. Given $u = (u_1,\ldots,u_N) \in X$ and $a \in \Lambda$ we define the function $u^a \in L^\infty(D)$ by
\begin{eqnarray}
\label{eq:construction}
u^a = F(u,a) := \sum_{i=1}^N u_i\mathds{1}_{A_i(a)}.
\end{eqnarray}
where $F:X\times\Lambda\rightarrow L^\infty(D)$ is the construction map.

We give four examples of the functions $A_i$ and the sets/interfaces they define.

\begin{example}
\label{ex:straight}
Let $D = [0,1]^2$, $\Lambda = [0,1]^2$ and $N = 2$. We specify points $a$ and $b$ on either side of the square $D$ and join them with a straight line. We then let $A_1(a,b)$ be the region of $D$ below this line and $A_2(a,b) = D\setminus A_1(a,b)$.
\end{example}

\begin{figure}[h!]
\begin{center}
\includegraphics[width=\textwidth]{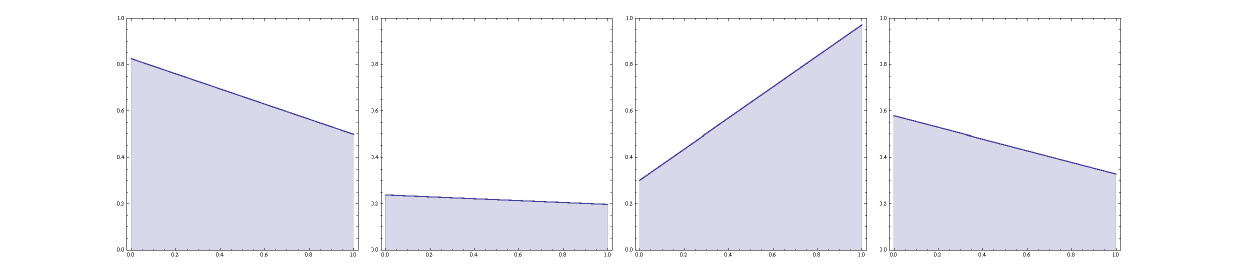}
\end{center}
\caption{Possible sets $A_i$ corresponding to Example \ref{ex:straight}}
\end{figure}

\begin{example}
\label{ex:curve}
Let $D = [0,1]^2$, $\Lambda = [0,1]^2$ and $N = 2$. Choose a continuous map $H:\Lambda\rightarrow L^{\infty}([0,1])$ such that $H(a,b)(0) = a$ and $H(a,b)(1) = b$ for all $(a,b) \in \Lambda$. Let $A_1(a,b)$ be the region of $D$ beneath the graph of the curve $H(a,b)$ and let $A_2(a,b) = D\setminus A_1(a,b)$. This setup includes the previous example: $H(a,b)(x) = a + (b-a)x$ defines the appropriate straight lines.

The continuity of $A_1$ and $A_2$ can be seen by noting that
\begin{eqnarray*}
|A_1(a_1,b_1)\Delta A_1(a_2,b_2)| &= |A_2(a_1,b_1)\Delta A_2(a_2,b_2)|\\
&\leq\int_0^1 |H(a_1,b_1)(x) - H(a_2,b_2)(x)|\,\dee x\\
&\leq \|H(a_1,b_1) - H(a_2,b_2)\|_{\infty}
\end{eqnarray*}
and using the continuity of $H$ into $L^\infty([0,1])$.

For example, one may take $H$ to be given by
\begin{eqnarray*}
H(a,b)(x) = a + (b-a)x + x\sin(6\pi x)/10
\end{eqnarray*}
which can be seen to be continuous into $L^\infty([0,1])$.
\end{example}

\begin{figure}[h!]
\begin{center}
\includegraphics[width=\textwidth]{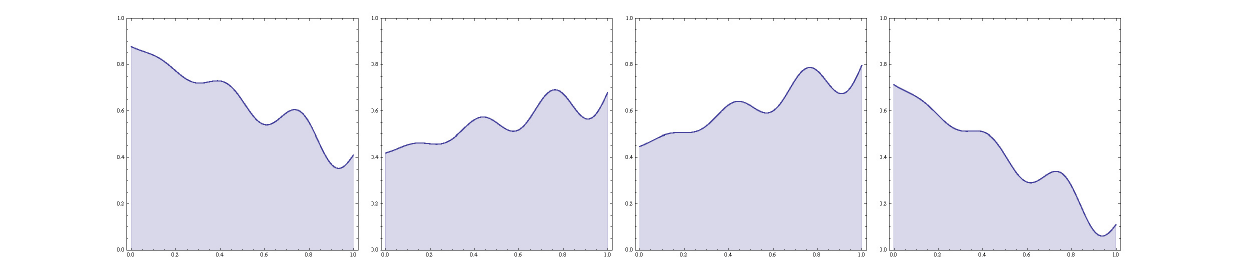}
\end{center}
\caption{Possible sets $A_i$, corresponding to Example \ref{ex:curve}}
\end{figure}

\mmd{\begin{example}
\label{ex:fault}
We can generalise the previous example to allow the inclusion of a fault. Let $D = [0,1]^2$, $\Lambda = [0,1]^2\times[-1,1]$ and $N=2$. Let $p \in (0,1)$ denote the horizontal location of the fault. Given $H:[0,1]^2\rightarrow L^\infty([0,1])$ as in the previous example, define $\tilde{H}:\Lambda\rightarrow L^\infty([0,1])$ by
\[
\tilde{H}(a,b,c)(x) =
\begin{cases}
H(a,b)(x) & x \in [0,p]\\
c + H(a,b)(x) & x \in (p,1]
\end{cases}
\]
so that the parameter $c$ determines the (signed) magnitude of the fault. Defining the sets $A_1(a,b,c)$ and $A_2(a,b,c)$ as the regions of $D$ beneath and above the curve $\tilde{H}(a,b,c)$ respectively, the continuity can be seen in a similar manner to the previous example.
\end{example}}

\begin{figure}[h!]
\begin{center}
\includegraphics[width=\textwidth]{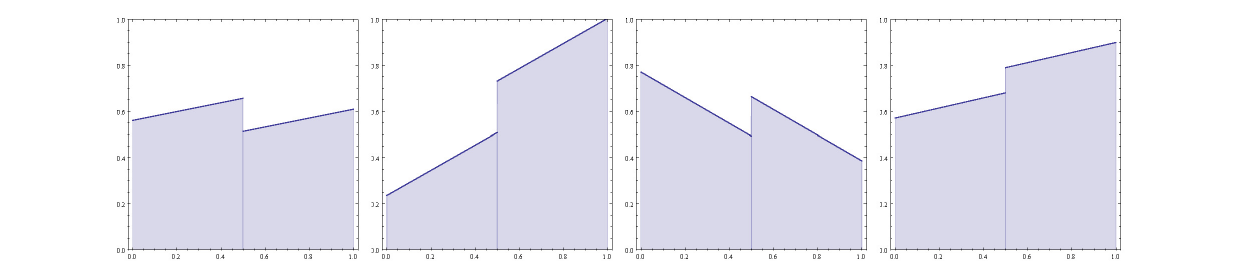}
\end{center}
\caption{\mmd{Possible sets $A_i$, corresponding to Example \ref{ex:fault} in the case $p=1/2$.}}
\end{figure}

\begin{example}
\label{ex:many}
Again working with $D = [0,1]^2$, but with a much larger parameter space, one could also select points at specific $x$-coordinates and linearly interpolate between them. Fix $K, N \in \mathbb{N}$ and set $\Lambda = \Xi_{N-1}^K \subseteq [0,1]^{(N-1)\times K}$, where $\Xi_{N-1}$ is the simplex
\begin{eqnarray*}
\Xi_{N-1} = \{(y_1,\ldots,y_{N-1})\in [0,1]^{N-1}\;|\;0\leq y_1\leq\ldots\leq y_{N-1}\leq 1\}.
\end{eqnarray*} 
Then given $a \in \Lambda$, define the functions $f_i(a)$, $i=1,\ldots,N-1$, to be the linear interpolation of the points $\big(\frac{j-1}{K-1},a_{ij}\big)_{j=1}^{K}$. $A_i(a)$, $i=1,\ldots,N-1$, is then defined to be the region between the graphs of the functions $f_i(a)$ and $f_{i-1}(a)$, and $A_N(a) = D\setminus \cup_{i=1}^{N-1} A_{i}(a)$.

In order to see the continuity of these maps, we first partition the domain into strips $D_j$,
\begin{eqnarray*}
D_j = \left\{(x,y) \in D\;\bigg|\; \frac{j-1}{K-1} \leq x \leq \frac{j}{K-1}\right\},\;\;\;j=1,\ldots,K-1
\end{eqnarray*}
so that we have
\begin{eqnarray*}
A_i(a) = \bigcup_{j=1}^{K-1}A_i(a)\cap D_j.
\end{eqnarray*}
It follows from properties of the symmetric difference that
\begin{eqnarray*}
|A_i(a)\Delta A_i(b)| \leq \sum_{j=1}^{K-1}|(A_i(a)\cap D_j)\Delta (A_i(b)\cap D_j)|.
\end{eqnarray*}
It hence suffices to show that the maps $A_i(\cdot)\cap D_j$ are continuous for all $i,j$. This follows from the same argument as in Example \ref{ex:curve}, for sufficiently small $|a-b|$.
\end{example}

\begin{figure}[h!]
\begin{center}
\includegraphics[width=\textwidth]{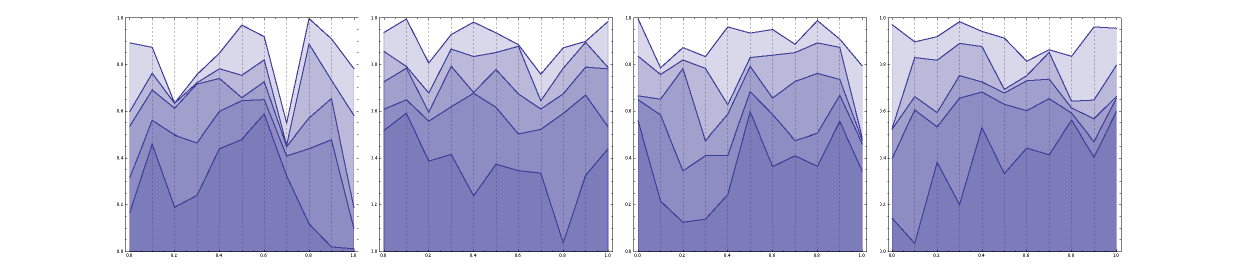}
\end{center}
\caption{Possible sets $A_i$, corresponding to Example \ref{ex:many} in the case $K=11$, $N=6$}
\end{figure}

\subsection{The Darcy Model for Groundwater Flow}
\label{ssec:gwf}
We consider the Darcy model for groundwater flow on a domain $D\subseteq \mathbb{R}^d$, $d=1,2,3$. Let $\kappa = (\kappa_{ij})$ denote the permeability tensor of the medium, $p$ the pressure of the water, and assume the viscosity of the water is constant. Darcy's law \cite{darcy} tells us that the velocity is proportional to the gradient of the pressure:
\begin{eqnarray*}
v = -\kappa\nabla p.
\end{eqnarray*}
Additionally, a local form of mass conservation tells us that
\begin{eqnarray*}
\nabla \cdot v = f.
\end{eqnarray*}
Combining these two equations, and imposing Dirichlet boundary conditions for simplicity, results in the PDE
\begin{eqnarray*}
\begin{cases}
-\nabla\cdot(\kappa\nabla p) = f & \text{ in } D\\
\hspace{1.89cm}p = g &\text{ on } \partial D.
\end{cases}
\end{eqnarray*}
This is the PDE we will consider in the forward model, and it gives rise to a solution map $\kappa\mapsto p$.

For simplicity we will work in the case where $\kappa$ is an isotropic (scalar) permeability, bounded above and below by positive constants, and so it can be represented as the image of some bounded function under a positive continuously differentiable map $\sigma:\mathbb{R}\rightarrow\mathbb{R}^+$. 

Let $V = H^1(D)$, the Sobolev space of once weakly differentiable functions on $D$ \cite{gilbarg}. Then given $f \in H^{-1}(D)$, $g \in H^{1/2}(\partial D)$, $u \in X$ and $a \in \Lambda$, define $p_{u,a} \in V$ to be the solution of the weak form of the PDE
\begin{eqnarray}
\label{pde}
\begin{cases}
-\nabla\cdot(\sigma(u^a)\nabla p_{u,a}) &= f\;\;\text{ in }D\\
\hspace{2.6cm}p_{u,a} &= g\;\,\,\text{ on }\partial D.
\end{cases}
\end{eqnarray}
We are first interested in the regularity of the map $\mathcal{R}:X\times\Lambda\rightarrow V$ given by $\mathcal{R}(u,a) = p_{u,a}$. We first recall what it means for $p_{u,a}$ to be a solution of (\ref{pde}). Since $g \in H^{1/2}(\partial D)$, by the trace theorem \cite{gilbarg} there exists $G \in V$ such that $\tr(G) = g$. The solution $p_{u,a}$ of (\ref{pde}) is then given by $p_{u,a} = q_{u,a} + G$, where $q_{u,a} \in H^1_0(D)$ solves the PDE
\begin{eqnarray}
\label{pde2}
\begin{cases}
-\nabla\cdot\big(\sigma(u^a)\nabla q_{u,a}\big) &= f + \nabla\cdot\big(\sigma(u^a)\nabla G\big)\;\;\text{ in }D\\
\hspace{2.6cm}q_{u,a} &= 0\hspace{3.5cm}\text{ on }\partial D.
\end{cases}
\end{eqnarray}
The following lemma tells us that the map $\mathcal{R}$ is well defined and has certain regularity properties. Its proof is given in the appendix.

\begin{lemma}
\label{lem:cont_darcy}
The map $\mathcal{R}:X\times\Lambda \rightarrow V$ is well-defined and satisfies:
\begin{enumerate}[(i)]
\item for each $(u,a) \in X\times\Lambda$,
\begin{eqnarray*}
\|\mathcal{R}(u,a)\|_V \leq (\|f\|_{V^*} + \|\sigma(u^a)\|_{L^\infty}\|G\|_V)/\kappa_{\min}(u,a) + \|G\|_V
\end{eqnarray*}
where $\kappa_{\min}(u,a)$ is given by
\begin{eqnarray*}
\kappa_{\min}(u,a) = \underset{x\in D}{\mathrm{essinf}}\,\sigma(u^a(x)) > 0;
\end{eqnarray*}
\item for each $a \in \Lambda$, $\mathcal{R}(\cdot,a):X\rightarrow V$ is locally Lipschitz continuous, i.e. for every $r > 0$ there exists $L(r) > 0$ such that, for all $u,v \in X$ with $\|u\|_{X}, \|v\|_{X} < r$ and all $a \in \Lambda$, we have
\begin{eqnarray*}
\|\mathcal{R}(u,a) - \mathcal{R}(v,a)\|_V \leq L(r)\|u-v\|_{X};
\end{eqnarray*}
\item for each $u \in X$, $\mathcal{R}(u,\cdot):\Lambda\rightarrow V$ is continuous.
\end{enumerate}

\end{lemma}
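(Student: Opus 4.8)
The plan is a Lax--Milgram argument combined with standard energy estimates; the only point requiring real care is that, as $a$ varies, the coefficient $\sigma(u^a)$ varies continuously only in $L^p(D)$ for $p<\infty$ and not in $L^\infty(D)$, since $u^a$ jumps across the moving interfaces. Everything rests on one elementary observation: because the sets $A_i(a)$ partition $D$ and $u^a=u_i$ on $A_i(a)$, we have $\|u^a\|_{L^\infty}\le\|u\|_X$ for \emph{every} $a\in\Lambda$, where $\|\cdot\|_X$ is the supremum norm on $X=C^0(D;\mathbb{R}^N)$; hence, $\sigma$ being positive and continuous, the constants $\kappa_{\min}(u,a)$ and $\|\sigma(u^a)\|_{L^\infty}$ lie between $\min_{|t|\le\|u\|_X}\sigma(t)>0$ and $\max_{|t|\le\|u\|_X}\sigma(t)<\infty$, uniformly in $a$. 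Likewise $|u^a(x)-v^a(x)|\le\|u-v\|_X$ pointwise, so writing $M(r)$ for a Lipschitz constant of $\sigma$ on $[-r,r]$ (available since $\sigma\in C^1$) one gets $\|\sigma(u^a)-\sigma(v^a)\|_{L^\infty}\le M(r)\|u-v\|_X$ whenever $\|u\|_X,\|v\|_X\le r$, again with no $a$-dependence. In short, the construction $(u,a)\mapsto u^a$ contributes only uniform ellipticity and Lipschitz bounds, and the argument then follows the familiar lines for the Darcy map.

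For (i), I would apply the Lax--Milgram theorem to the bilinear form $b_{u,a}(q,w)=\int_D\sigma(u^a)\nabla q\cdot\nabla w\,\dee x$ on $H^1_0(D)$, which by the above is bounded (constant $\|\sigma(u^a)\|_{L^\infty}$) and coercive (constant $\kappa_{\min}(u,a)$, via the Poincar\'e inequality), together with the bounded functional $w\mapsto\langle f,w\rangle-\int_D\sigma(u^a)\nabla G\cdot\nabla w\,\dee x$, whose norm is at most $\|f\|_{V^*}+\|\sigma(u^a)\|_{L^\infty}\|G\|_V$. This produces the unique solution $q_{u,a}$ of (\ref{pde2}); testing the weak form with $q_{u,a}$ and invoking Poincar\'e gives $\|q_{u,a}\|_V\le(\|f\|_{V^*}+\|\sigma(u^a)\|_{L^\infty}\|G\|_V)/\kappa_{\min}(u,a)$, and the stated bound follows from $\mathcal{R}(u,a)=p_{u,a}=q_{u,a}+G$ and the triangle inequality.

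For (ii), with $a$ fixed and $\|u\|_X,\|v\|_X<r$, subtracting the two instances of (\ref{pde2}) yields, for all $w\in H^1_0(D)$,
\[
\int_D\sigma(u^a)\nabla(q_{u,a}-q_{v,a})\cdot\nabla w\,\dee x=-\int_D\bigl(\sigma(u^a)-\sigma(v^a)\bigr)\nabla(q_{v,a}+G)\cdot\nabla w\,\dee x,
\]
and testing with $w=q_{u,a}-q_{v,a}$, using coercivity, Cauchy--Schwarz, the bound on $\|\sigma(u^a)-\sigma(v^a)\|_{L^\infty}$, and the uniform-in-$a$ bound on $\|q_{v,a}\|_V$ from (i), gives $\|q_{u,a}-q_{v,a}\|_V\le L(r)\|u-v\|_X$ with $L(r)$ independent of $a$; since $p_{u,a}-p_{v,a}=q_{u,a}-q_{v,a}$, this is (ii). For (iii), fix $u$ and take $a_n\to a$: from $u^{a_n}-u^a=\sum_i u_i(\mathds{1}_{A_i(a_n)}-\mathds{1}_{A_i(a)})$ and the continuity hypothesis $|A_i(a_n)\Delta A_i(a)|\to0$ one gets $\|u^{a_n}-u^a\|_{L^1}\le\|u\|_X\sum_i|A_i(a_n)\Delta A_i(a)|\to0$, hence $\|\sigma(u^{a_n})-\sigma(u^a)\|_{L^1}\to0$ by the Lipschitz bound, while $\|\sigma(u^{a_n})-\sigma(u^a)\|_{L^\infty}$ stays bounded. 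Differencing the instances of (\ref{pde2}) for $(u,a_n)$ and $(u,a)$ and rearranging as in (ii) gives
\[
\int_D\sigma(u^{a_n})\nabla(q_{u,a_n}-q_{u,a})\cdot\nabla w\,\dee x=-\int_D\bigl(\sigma(u^{a_n})-\sigma(u^a)\bigr)\nabla p_{u,a}\cdot\nabla w\,\dee x,
\]
and testing with $w=q_{u,a_n}-q_{u,a}$ leaves
\[
\|q_{u,a_n}-q_{u,a}\|_V\le\frac{1}{\kappa_{\min}(u,a_n)}\left(\int_D|\sigma(u^{a_n})-\sigma(u^a)|^2\,|\nabla p_{u,a}|^2\,\dee x\right)^{1/2};
\]
the prefactor is bounded uniformly in $n$, and the integrand converges to $0$ in measure while being dominated by the fixed integrable function $\bigl(2\max_{|t|\le\|u\|_X}\sigma(t)\bigr)^2|\nabla p_{u,a}|^2$, so the right-hand side tends to $0$ by dominated convergence; since $p_{u,a_n}-p_{u,a}=q_{u,a_n}-q_{u,a}$, part (iii) follows.

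The one genuine subtlety, and the step I would handle most carefully, is this last limit: one must \emph{not} try to pass to the limit directly in $\int_D\sigma(u^{a_n})\nabla q_{u,a_n}\cdot\nabla w\,\dee x$, where the oscillating coefficient is tested against a gradient that converges at best weakly and uniform integrability would be required. Differencing the two equations first is what saves the day, since the difference of coefficients is then paired with the \emph{fixed} gradient $\nabla p_{u,a}\in L^2(D)$, and convergence reduces to the statement that a sequence converging in measure and dominated by a fixed integrable function converges in $L^1$. All remaining estimates are routine.
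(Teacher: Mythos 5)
Your argument is correct and, for well-definedness and parts (i)--(ii), is essentially the paper's proof: Lax--Milgram with the uniform-in-$a$ ellipticity constants $\min_{|t|\le\|u\|_X}\sigma(t)$ and $\max_{|t|\le\|u\|_X}\sigma(t)$, the energy estimate from testing with $q_{u,a}$, and the difference equation for $p_{u,a}-p_{v,a}$ combined with $\|\sigma(u^a)-\sigma(v^a)\|_{L^\infty}\le\max_{|t|\le r}|\sigma'(t)|\,\|u-v\|_X$, which the paper derives via the disjointness of the $A_i(a)$ and you derive from the pointwise inequality $|u^a-v^a|\le\|u-v\|_X$ --- the same fact.

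In part (iii) you use a genuinely different (and cleaner) decomposition of the difference of the two equations. The paper arranges things so that the coefficient difference $\sigma(u^a)-\sigma(u^b)$ multiplies the $b$-dependent gradient $\nabla p_{u,b}$, arriving at terms of the form $\bigl(\int_{A_i(a)\Delta A_i(b)}|\nabla p_{u,b}|^2\bigr)^{1/2}$ and invoking Lemma \ref{lem:dct2}; since that lemma is stated for a fixed integrable function, applying it verbatim to the $b$-dependent family $|\nabla p_{u,b}|^2$ requires an extra uniform-integrability remark. You instead keep the uniformly elliptic coefficient $\sigma(u^{a_n})$ on the left and pair the coefficient difference with the \emph{fixed} gradient $\nabla p_{u,a}$, so the right-hand side is $\bigl(\int_D|\sigma(u^{a_n})-\sigma(u^a)|^2\,|\nabla p_{u,a}|^2\,\dee x\bigr)^{1/2}$ with an integrand converging to zero in measure and dominated by the single $L^1$ function $\bigl(2\max_{|t|\le\|u\|_X}\sigma(t)\bigr)^2|\nabla p_{u,a}|^2$, so dominated convergence applies directly. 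Both proofs reduce to the same underlying fact, but your arrangement makes the domination explicit and sidesteps the only delicate point in the paper's version; the subtlety you flag at the end is exactly the right one.
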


We now choose a continuous linear observation operator $\ell:V\rightarrow\mathbb{R}^J$. For example, writing $\ell = (\ell_1,\ldots,\ell_J)$, we could take
\begin{eqnarray}
\label{eq:obs_op}
\ell_i(p) = \int_D \frac{1}{(2\pi\eps)^{d/2}}e^{-|x_i-y|^2/2\eps}p(y)\,\dee x,\;\;\;i=1,\ldots,J
\end{eqnarray}
for some $\eps > 0$, so that $\ell_i$ approximates a point observation at the point $x_i \in D$. Our forward operator $\mathcal{G}:X\times\Lambda\rightarrow\mathbb{R}^J$ is then defined by $\mathcal{G} = \ell \circ \mathcal{R}$, so that it can be written as the composition
\begin{eqnarray*}
(u,a)\mapsto u^a \mapsto \kappa = \sigma(u^a) \mapsto p \mapsto \ell(p)
\end{eqnarray*}

From the above regularity of $\mathcal{R}$ we can deduce the following regularity properties of our forward operator $\mathcal{G}$:
\begin{proposition}
\label{cor:cont_gwf}
Define the map $\mathcal{G}:X\times\Lambda\rightarrow\mathbb{R}^J$ as above. Then $\mathcal{G}$ satisfies
\begin{enumerate}
\item For each $r > 0$ and $u,v \in X$ with $\|u\|_X,\|v\|_X < r$, there exists $C(r) > 0$ such that for all $a \in \Lambda$,
\begin{eqnarray*}
|\mathcal{G}(u,a) - \mathcal{G}(v,a)| \leq C(r)\|u-v\|_X.
\end{eqnarray*}
\item For each $u \in X$, the map $\mathcal{G}(u,\cdot):\Lambda\rightarrow\mathbb{R}^J$ is continuous.
\end{enumerate}
\end{proposition}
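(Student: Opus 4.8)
The plan is to deduce the proposition directly from Lemma~\ref{lem:cont_darcy} by composing $\mathcal{R}$ with the bounded linear observation operator $\ell$. Write $M := \|\ell\|$ for the operator norm of $\ell : V \to \mathbb{R}^J$, so that $|\ell(p)| \le M\|p\|_V$ for all $p \in V$. Since $\mathcal{G} = \ell\circ\mathcal{R}$ and $\ell$ is linear, for all $u,v \in X$ and $a,b \in \Lambda$ one has
\begin{eqnarray*}
|\mathcal{G}(u,a)-\mathcal{G}(v,a)| = |\ell\big(\mathcal{R}(u,a)-\mathcal{R}(v,a)\big)| \le M\,\|\mathcal{R}(u,a)-\mathcal{R}(v,a)\|_V
\end{eqnarray*}
and likewise $|\mathcal{G}(u,a)-\mathcal{G}(u,b)| \le M\,\|\mathcal{R}(u,a)-\mathcal{R}(u,b)\|_V$.

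For the first claim, I would fix $r>0$ and $u,v\in X$ with $\|u\|_X,\|v\|_X<r$, and apply Lemma~\ref{lem:cont_darcy}(ii) to bound the right-hand side of the first inequality by $M L(r)\|u-v\|_X$, uniformly over $a\in\Lambda$; thus $C(r):=ML(r)$ works. For the second claim, I would fix $u\in X$ and apply Lemma~\ref{lem:cont_darcy}(iii), which gives $\|\mathcal{R}(u,a)-\mathcal{R}(u,b)\|_V\to 0$ as $|a-b|\to 0$, so that $\mathcal{G}(u,\cdot):\Lambda\to\mathbb{R}^J$ is continuous. This is the entire argument at the level of the proposition.

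Accordingly, I do not anticipate any obstacle in proving the proposition itself; all of the analytic substance sits in Lemma~\ref{lem:cont_darcy}, whose proof is deferred to the appendix. For orientation, parts (i) and (ii) of that lemma are routine Lax--Milgram / energy estimates, using uniform ellipticity $\sigma(u^a)\ge\kappa_{\min}(u,a)>0$ and the local Lipschitz continuity of $\sigma$ applied to the difference of the two weak formulations. The delicate point — which I would expect to be the main obstacle — is part (iii), continuity in the geometric parameter $a$: this parameter enters only through the discontinuous coefficient $\sigma(u^a)=\sigma\big(\sum_{i=1}^N u_i\mathds{1}_{A_i(a)}\big)$, and $\sigma(u^a)$ does \emph{not} converge to $\sigma(u^b)$ in $L^\infty$ as $|a-b|\to 0$ — one only controls $|A_i(a)\Delta A_i(b)|\to 0$, hence $\|\sigma(u^a)-\sigma(u^b)\|_{L^p(D)}\to 0$ for finite $p$ only. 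One therefore cannot estimate a coefficient difference in operator norm. Instead I would compare $q_{u,a}$ with $q_{u,b}$ via the elliptic problem their difference satisfies, whose forcing is driven by $(\sigma(u^a)-\sigma(u^b))\nabla q_{u,b}$, and control this term in $V^*$ either using higher integrability of $\nabla q_{u,b}$ (a Meyers-type estimate) paired against the vanishing $L^p$ norm of the coefficient difference, or by a weak--strong argument: along a subsequence $q_{u,a_n}\rightharpoonup q^*$ in $H^1$ and $\sigma(u^{a_n})\to\sigma(u^b)$ a.e. and boundedly, so $\sigma(u^{a_n})\nabla\varphi\to\sigma(u^b)\nabla\varphi$ strongly in $L^2$ for fixed test $\varphi$ by dominated convergence, which lets one pass to the limit in the bilinear form, identify $q^*=q_{u,b}$ by uniqueness, and then upgrade to strong $H^1$ convergence through the energy identity.
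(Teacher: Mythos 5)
Your proof of the proposition is exactly the paper's: both reduce it to Lemma~\ref{lem:cont_darcy} by composing $\mathcal{R}$ with the bounded linear map $\ell$, taking $C(r)=\|\ell\|L(r)$ for part (i) and using continuity of $\ell$ for part (ii). (Incidentally, the paper's appendix proof of Lemma~\ref{lem:cont_darcy}(iii) is more elementary than the Meyers/weak--strong routes you sketch: since $\sigma(u^a)-\sigma(u^b)$ is uniformly bounded and supported on $\bigcup_i A_i(a)\Delta A_i(b)$, the forcing term is controlled by $\big(\int_{A_i(a)\Delta A_i(b)}|\nabla p_{u,b}|^2\big)^{1/2}$, which vanishes as $|A_i(a)\Delta A_i(b)|\to 0$ by absolute continuity of the integral.)
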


\begin{proof}
\begin{enumerate}
\item The map $\ell$ is defined to be a continuous linear functional, and so in particular is Lipschitz. Since we have $\mathcal{G} = \ell\circ\mathcal{R}$ the result follows from Lemma \ref{lem:cont_darcy}(ii).
\item This follows from the continuity of $\ell$ and Lemma \ref{lem:cont_darcy}(iii).
\end{enumerate}
\end{proof}

\subsection{The Complete Electrode Model for EIT}
\label{ssec:eit}
Electrical Impedance Tomography (EIT) is an imaging technique that aims to make inference about the internal conductivity of a body from surface voltage measurements. Electrodes are attached to the surface of the body, current is injected, and the resulting voltages on the electrodes are measured. Applications include both medical imaging, where the aim is to non-invasively detect internal abnormalities within a human patient, and subsurface imaging, where material properties of the subsurface are differentiated via their conductivities. Early references include \cite{eit78} in the context of medical imaging and \cite{ert33} in the context of subsurface imaging.  

The complete electrode model (CEM) is proposed for the forward model in \cite{cheney}, and shown to agree with experimental data up to measurement precision. In its strong form, the PDE reads
\begin{eqnarray}
\label{eq:eitpde}
\begin{cases}
\displaystyle-\nabla\cdot(\kappa(x)\nabla v(x)) = 0 & x \in D\\[.8em]
\displaystyle\int_{e_l} \kappa\frac{\partial v}{\partial n}\,\dee S = I_l & l=1,\ldots,L\\[.8em]
\displaystyle\kappa(x)\frac{\partial v}{\partial n}(x) = 0 & x \in \partial D\setminus\bigcup_{l=1}^L e_l\\[.8em]
\displaystyle v(x) + z_l\kappa(x)\frac{\partial v}{\partial n}(x) = V_l & x \in e_l, l=1,\ldots,L. 
\end{cases}
\end{eqnarray}

\begin{figure}
\begin{center}
\includegraphics[width=0.5\textwidth, trim=0cm 2cm 0cm 0cm]{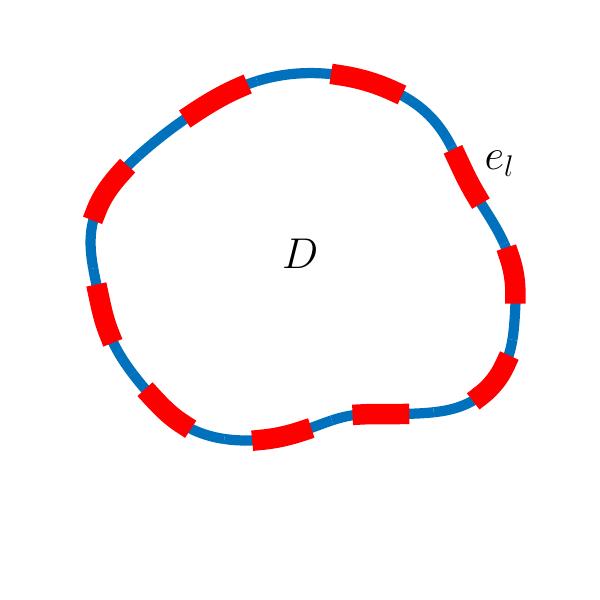}
\caption{An example domain $D$, with attached electrodes $(e_l)_{l=1}^L$, for the EIT problem.}
\label{fig:eit_domain}
\end{center}
\end{figure}

The domain $D$ represents the body, and $(e_l)_{l=1}^L\subseteq \partial D$ the electrodes attached to its surface with corresponding contact impedances $(z_l)_{l=1}^L$. A current $I_l$ is injected into each electrode $e_l$, and a voltage measurement $V_l$ made. Here $\kappa$ represents the conductivity of the body, and $v$ the potential within it. Note that the solution comprises both a function $v \in H^1(D)$ and a vector $(V_l)_{l=1}^L \in \mathbb{R}^L$ of boundary voltage measurements.

A corresponding weak form exists, and is shown to have a unique solution (up to constants) given appropriate conditions on $\kappa$, $(z_l)_{l=1}^L$ and $(I_l)_{l=1}^L$ -- see \cite{cheney} for details. Moreover, under some additional assumptions, the mapping $\kappa\mapsto (V_l)_{l=1}^L$ is known to be Fr\'echet differentiable when we equip the conductivity space with the supremum norm \cite{frechet}.

We can apply different current stimulation patterns to the electrodes to yield additional information. Assume that we have $M$ different (linearly independent) current stimulation patterns $(I^{(m)})_{m=1}^M$. This yields $M$ different mappings $\kappa\mapsto (V_l^{(m)})_{l=1}^L$ each with the regularity above, or equivalently a mapping $\kappa\mapsto V$ where $V \in \mathbb{R}^{J}$ with $J = LM$.

Analogously to the Darcy model case, we will consider isotropic conductivities of the form $\kappa = \sigma(u^a)$, where $\sigma:\mathbb{R}\rightarrow\mathbb{R}^+$ is positive and continuously differentiable. Our forward operator $\mathcal{G}:X\times\Lambda\rightarrow \mathbb{R}^J$, is then given by the composition
\begin{eqnarray*}
\fl(u,a) \mapsto u^a \mapsto \kappa = \sigma(u^a) \mapsto \big((v^{(1)},V^{(1)}),\ldots,(v^{(M)},V^{(M)})\big) \mapsto (V^{(1)},\ldots,V^{(M)}).
\end{eqnarray*}
We show in the appendix that the map defined in this way has the same regularity as the map corresponding to the Darcy model.

\begin{proposition}
\label{prop:cont_eit}
Define the map $\mathcal{G}:X\times\Lambda\rightarrow\mathbb{R}^J$ as above. Then $\mathcal{G}$ satisfies
\begin{enumerate}
\item For each $r > 0$ and $u,v \in X$ with $\|u\|_X,\|v\|_X < r$, there exists $C(r) > 0$ such that for all $a \in \Lambda$,
\begin{eqnarray*}
|\mathcal{G}(u,a) - \mathcal{G}(v,a)| \leq C(r)\|u-v\|_X.
\end{eqnarray*}
\item For each $u \in X$, the map $\mathcal{G}(u,\cdot):\Lambda\rightarrow\mathbb{R}^J$ is continuous.
\end{enumerate}
\end{proposition}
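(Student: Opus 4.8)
The plan is to follow the template of Proposition \ref{cor:cont_gwf}: reduce everything to the solution map $\kappa \mapsto (V^{(1)},\ldots,V^{(M)})$ of the weak form of \eqref{eq:eitpde}, precomposed with $(u,a)\mapsto\kappa=\sigma(u^a)$. The one genuinely new point, and the main obstacle, is that continuity of the CEM solution map is only available with respect to the supremum norm on the conductivity (via \cite{cheney,frechet}), while the map $a\mapsto u^a=F(u,a)$ is \emph{not} continuous into $L^\infty(D)$ --- moving an interface changes the function on a set of small but positive measure without making the sup-norm difference small. I would therefore return to the energy estimate for the weak CEM form and exploit absolute continuity of the Lebesgue integral.

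Fix $r>0$. For any $w\in X$ with $\|w\|_X<r$ and any $a\in\Lambda$, the function $u^a=F(w,a)$ takes values in $[-r,r]$, so $\sigma(u^a)$ lies in the fixed order interval $[\kappa_-(r),\kappa_+(r)]$ with $\kappa_\pm(r)=\min/\max_{[-r,r]}\sigma>0$; hence the weak CEM bilinear form is coercive with a constant depending only on $r$ and the fixed data $(z_l),(I^{(m)})$, and the corresponding (suitably normalised) solutions $(v^{(m)},V^{(m)})$ satisfy $H^1(D)\oplus\mathbb{R}^L$ bounds depending only on $r$, by \cite{cheney}. Since the sets $A_i(a)$ partition $D$, for $u,v\in X$ one has $u^a-v^a=\sum_i(u_i-v_i)\mathds{1}_{A_i(a)}$, whence $\|u^a-v^a\|_{L^\infty}\le\|u-v\|_X$, and as $\sigma\in C^1$ is Lipschitz on $[-r,r]$, $\|\sigma(u^a)-\sigma(v^a)\|_{L^\infty}\le C(r)\|u-v\|_X$ uniformly in $a$. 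For part (1), subtract the weak CEM forms for $\kappa_1=\sigma(u^a)$ and $\kappa_2=\sigma(v^a)$, test with the difference of solutions, and apply coercivity, Cauchy--Schwarz and the a priori bound on $\|\nabla v_2^{(m)}\|_{L^2}$ to get $|V_1^{(m)}-V_2^{(m)}|\le C(r)\|\kappa_1-\kappa_2\|_{L^\infty}$ for each $m$ (equivalently, one may quote local Lipschitz continuity of $\kappa\mapsto(V_l^{(m)})$ on the bounded convex set $\{\kappa_-(r)\le\rho\le\kappa_+(r)\}$, which follows from the Fréchet differentiability in \cite{frechet} together with the uniform bounds above). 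Composing with $\|\kappa_1-\kappa_2\|_{L^\infty}\le C(r)\|u-v\|_X$ and stacking over $m=1,\ldots,M$ gives (1).

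For part (2), fix $u\in X$, set $r=\|u\|_X+1$, and for $a,b\in\Lambda$ let $S_{a,b}=\{x\in D:u^a(x)\neq u^b(x)\}$. Each $x\in D$ lies in exactly one $A_i(a)$ and exactly one $A_i(b)$, so if $x\notin\bigcup_i(A_i(a)\Delta A_i(b))$ those indices coincide and $u^a(x)=u^b(x)$; thus $S_{a,b}\subseteq\bigcup_i(A_i(a)\Delta A_i(b))$ and $|S_{a,b}|\le\sum_i|A_i(a)\Delta A_i(b)|\to0$ as $|a-b|\to0$, by the continuity assumption on the $A_i$. Since $\kappa_a:=\sigma(u^a)$ and $\kappa_b:=\sigma(u^b)$ agree off $S_{a,b}$ and $|\kappa_a-\kappa_b|\le\kappa_+(r)-\kappa_-(r)$ on it, subtracting the weak CEM forms and testing with the difference of solutions yields, for each $m$,
\begin{eqnarray*}
|V_a^{(m)}-V_b^{(m)}|\le C(r)\left(\int_{S_{a,b}}|\nabla v_b^{(m)}|^2\,\dee x\right)^{1/2}.
\end{eqnarray*}
As $\nabla v_b^{(m)}\in L^2(D)$, absolute continuity of the Lebesgue integral forces the right-hand side to $0$ as $|S_{a,b}|\to0$, hence as $|a-b|\to0$; stacking over $m$ gives continuity of $\mathcal{G}(u,\cdot)$.

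The only step needing care beyond the Darcy case is this last one: quoting an $L^\infty$-type continuity result for the CEM solution map would not suffice, precisely because $a\mapsto u^a$ is discontinuous into $L^\infty(D)$, and the resolution is to use $|\{u^a\neq u^b\}|\to0$ together with the integrability of $|\nabla v|^2$. Coercivity of the weak CEM form, the uniform a priori bounds and Lipschitz continuity of $\sigma$ on compacts are routine and hold uniformly on the relevant bounded sets.
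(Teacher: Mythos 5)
Your proof is correct, and for part (2) it takes a genuinely different route from the paper. Part (1) is essentially the paper's argument: both of you quote the Fr\'echet differentiability (hence local Lipschitz continuity) of the conductivity-to-voltage map from \cite{frechet} with respect to the supremum norm, and compose it with the uniform-in-$a$ Lipschitz bound $\|F(u,a)-F(v,a)\|_{L^\infty}\leq C\|u-v\|_X$. For part (2), the paper does \emph{not} run the energy estimate itself: it invokes Corollary 2.8 in \cite{eit}, which gives continuity of the CEM forward map under convergence \emph{in measure} of uniformly elliptic conductivities, and then verifies that $F(u,a_n)\rightarrow F(u,a)$ in $L^p$ (hence in measure) using $\sum_i\|u_i\|_\infty^p|A_i(a_n)\Delta A_i(a)|\rightarrow 0$. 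You instead make the argument self-contained: subtract the two weak forms of (\ref{eq:eitpde}), test with the difference of solutions, use coercivity of the CEM bilinear form from \cite{cheney} (uniform over the order interval $[\kappa_-(r),\kappa_+(r)]$), and conclude via absolute continuity of the integral of $|\nabla v_b^{(m)}|^2$ over the shrinking set $\{u^a\neq u^b\}\subseteq\bigcup_i (A_i(a)\Delta A_i(b))$. This is precisely the strategy the paper uses in the appendix for the Darcy model (Lemma \ref{lem:cont_darcy}(iii) together with Lemma \ref{lem:dct2}), so your route buys uniformity of method across the two forward models and removes the dependence on the external convergence-in-measure result, at the cost of having to restate coercivity and the a priori bound for the CEM quotient space (and to handle the grounding/normalisation of the voltages, which you should make explicit). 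Your observation that an $L^\infty$-continuity result cannot be used for part (2), because $a\mapsto u^a$ is discontinuous into $L^\infty(D)$, is exactly the point the paper's citation of a convergence-in-measure result is designed to circumvent.
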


\section{Onsager-Machlup Functionals and Prior Modelling}
\label{sec:prior}

\mmd{In this section we recall the definition of an Onsager-Machlup functional for a measure which is equivalent\footnote{\mmd{Two measures $\nu,\mu$ on a measurable space $(M,\mathcal{M})$ are equivalent if $\nu(A) = 0$ if and only if $\mu(A) = 0$, for $A \in \mathcal{M}$.}} to a Gaussian measure. We then introduce the prior measures that we will consider, first on the function space $X$, then the geometric parameter space $\Lambda$, and finally the product space $X\times\Lambda$. We conclude the section by extending the definition of Onsager-Machlup functional so that it is appropriate for the measures we consider here, supported on fields and geometric parameters which are combined
to make piecewise continuous functions.} 

\subsection{Onsager-Machlup Functionals}
The Onsager-Machlup functional of a measure is the negative logarithm of its Lebesgue density when such a density exists, and otherwise can be thought of analogously. We start by defining it precisely for measures defined via density with respect to a Gaussian, allowing for infinite dimensional spaces on which Lebesgue measure is not defined. Suppose that $\mu$ is a measure equivalent to a Gaussian measure $\mu_0$. Then the Onsager-Machlup functional for $\mu$ is defined as follows.

\begin{definition}[Onsager-Machlup functional I]
Let $\mu$ be a measure on a Banach space $Z$ which is equivalent to $\mu_0$, where $\mu_0$ is a Gaussian measure on $Z$ with Cameron-Martin space $E$. Let $B^\delta(z)$ denote the ball of radius $\delta$ centred at $z \in Z$. A functional $I:Z\rightarrow\overline{\mathbb{R}}$ is called the \emph{Onsager-Machlup functional} for $\mu$ if, for each $x,y \in E$,
\begin{eqnarray*}
\lim_{\delta\downarrow 0}\frac{\mu(B^\delta(x))}{\mu(B^\delta(y))} = \exp\left(I(y)-I(x)\right)
\end{eqnarray*}
and $I(x) = \infty$ for $x\notin E$.
\end{definition}

\begin{remarks}
\begin{enumerate}[(i)]
\item The Onsager-Machlup functional is only defined up to addition of a constant.
\item  If $Z$ is finite dimensional and $\mu$ admits a positive Lebesgue density $\rho$, then $I(x) = -\log\rho(x)$ for all $x \in Z$. In light of the previous remark, this is true even if $\rho$ is not normalised.
\item Let $Z = \mathbb{R}^n$ be finite dimensional, and let $\mu_0 = N(0,\Sigma)$ be a Gaussian measure on $Z$. Let $\Gamma \in \mathbb{R}^{m\times m}$ be a positive-definite matrix, $A \in \mathbb{R}^{m\times n}$ and $y \in \mathbb{R}^m$. Define $\mu$ by 
\begin{eqnarray*}
\frac{\dee \mu}{\dee \mu_0}(x) \propto \exp\left(-\frac{1}{2}|Ax-y|_\Gamma^2\right)
\end{eqnarray*}
so that
\begin{eqnarray*}
\frac{\dee \mu}{\dee x}(x) \propto \exp\left(-\frac{1}{2}|Ax-y|_\Gamma^2 - \frac{1}{2}|x|_{\Sigma}^2\right).
\end{eqnarray*}
Then by the previous remark, the Onsager-Machlup functional for $\mu$ is given by
\begin{eqnarray*}
I(x) = \frac{1}{2}|Ax-y|_\Gamma^2 + \frac{1}{2}|x|_{\Sigma}^2
\end{eqnarray*}
for all $x \in Z$, which is a Tikhonov regularised least squares functional.
\item The preceding example (iii) may be extended to an infinite dimensional setting. Let $Z$ be a separable Banach space, and let $\mu_0 = N(0,\mathcal{C}_0)$ be a Gaussian measure on $Z$ with Cameron-Martin space $(E,\langle\cdot,\cdot\rangle_E,\|\cdot\|_E)$. Let $\Gamma \in \mathbb{R}^{m\times m}$ be a positive-definite matrix, $A:X\rightarrow \mathbb{R}^m$ a bounded linear operator and $y \in \mathbb{R}^m$. Define $\mu$ by
\begin{eqnarray*}
\frac{\dee \mu}{\dee \mu_0}(x) \propto \exp\left(-\frac{1}{2}|Ax-y|_\Gamma^2\right).
\end{eqnarray*}
Then Theorem 3.2 in \cite{map} tells us that the Onsager-Machlup functional for $\mu$ is given by\
\begin{eqnarray*}
I(x) = \frac{1}{2}|Ax - y|_\Gamma^2 + \frac{1}{2}\|x\|_E^2.
\end{eqnarray*}
\item In this paper, the posterior distribution will be a measure on the product space $Z = X\times\Lambda$. The prior distribution will be an independent product of a Gaussian on $X$ and a compactly supported measure on $\Lambda$. Due to the assumption of compact support, the prior will not be equivalent to a Gaussian measure on $Z$ and so the above definition doesn't apply; we provide a suitable extension to the definition in subsection \ref{ssec:prior}.
\end{enumerate}
\end{remarks}

As we are taking a Bayesian approach to the inverse problem, we incorporate our prior beliefs about the permeability/conductivity into the model via probability measures on $X$ and $\Lambda$. We will combine these into a prior measure on the product space $X\times\Lambda$. We equip this space with any (complete) norm $\|(\cdot,\cdot)\|$ such that if $\|(u,a)\|\rightarrow 0$, then $\|u\|_X\rightarrow 0$ and $|a|\rightarrow 0$.

\subsection{Priors for the Fields}
\label{ssec:priorfield}

We wish to put priors on the fields $u_1,\ldots,u_N \in C^0(D)$. We use independent Gaussian measures $u_i \sim \mu_0^i := N(m_i,\mathcal{C}_i)$, where the means $m_i \in C^0(D)$, and each covariance operator $\mathcal{C}_i:C^0(D)\rightarrow C^0(D)$ is trace-class and positive definite. It follows that the vector $(u_1,\ldots,u_N)\sim \mu_0^1\times\ldots\times\mu_0^N =:\mu_0$ is Gaussian on $X$:
\begin{eqnarray*}
\mu_0 = N\left(m,\bigoplus_{i=1}^N \mathcal{C}_i\right)
\end{eqnarray*}
where $m = (m_1,\ldots,m_N) \in X$. If $E_i$ denotes the Cameron-Martin space \cite{lecturenotes} of $\mu_0^i$, then that of $\mu_0$ is given by 
\begin{eqnarray*}
E = \bigoplus_{i=1}^N E_i
\end{eqnarray*}
with inner product given by the sum of those of its component spaces.

The Onsager-Machlup functional of $\mu_0$ is known to be given by
\begin{eqnarray*}
J(u) = 
\begin{cases}
\frac{1}{2}\|u - m\|^2_E & u - m \in E\\
\infty &u - m \notin E.
\end{cases}
\end{eqnarray*}
This can be seen, for example, as a consequence of Proposition 18.3 in \cite{lifshits}.

\begin{remark}
We may assume that the different fields are correlated under the prior, so long as $\mu_0$ remains Gaussian on $X$ -- this does not affect any of the following theory. Allowing correlations between the fields and the geometric parameters under the prior is a more technical issue however, and so we will assume that these are independent. 
\end{remark}

\mmd{\begin{example}
\label{ex:laplacian}
Define the negative Laplacian with Neumann boundary conditions as follows:
\begin{eqnarray*}
A = -\Delta,\;\;\;\mathcal{D}(A) &= \left\{u \in H^2(D)\;\bigg|\;\frac{\dee u}{\dee \nu} = 0\text{ on }\partial D, \int_Du(x)\,\dee x = 0\right\}.
\end{eqnarray*}
Then $A$ is invertible. We can define $\mathcal{C}_i = A^{-\alpha_i}$, where each $\alpha_i > d/2$. Then each $\mathcal{C}_i$ is trace-class and positive definite, and samples from each $\mu_0^i$ will be almost surely continuous and so $\mu_0$ can be considered as a Gaussian measure on $X$. Moreover, regularity of the samples will increase as $\alpha_i$ increases, see \cite{lecturenotes} for details.
\end{example}}

\subsection{Priors for the Geometric Parameters}
\label{ssec:priorgeo}

We also want to put a prior measure on the geometric parameters, i.e. we want to choose a probability measure on $\Lambda$. Since $\Lambda \subseteq \mathbb{R}^k$ the analysis is more straightforward than the infinite dimensional case. Let $\nu$ be a probability measure on $\Lambda$ with compact support $S \subseteq \Lambda$. We assume $\nu$ is absolutely continuous with respect to the Lebesgue measure and that its density $\rho$ is continuous on $S$. Despite being defined on a finite dimensional space, the measure $\nu$ is not necessarily equivalent to the Lebesgue measure on the whole of $\mathbb{R}^k$ and so the previous definition of Onsager-Machlup functional does not apply. We hence must formulate a new definition for this case.

Since $\rho > 0$ on $\intr(S)$, we can use the continuity of $\rho$ to calculate the limits of ratios of small ball probabilities for $\nu$ on $\intr(S)$. Let $a_1,a_2 \in \intr(S)$, then
\begin{eqnarray*}
\lim_{\delta\downarrow 0}\frac{\nu(B^{\delta}(a_1))}{\nu(B^{\delta}(a_2))} &= \lim_{\delta\downarrow 0}\frac{\int_{B^{\delta}(a_1)} \rho(a)\,\dee a}{\int_{B^{\delta}(a_2)} \rho(a)\,\dee a}\\
&= \lim_{\delta\downarrow 0}\frac{\frac{1}{|B^{\delta}(a_1)|}\int_{B^{\delta}(a_1)} \rho(a)\,\dee a}{\frac{1}{|B^{\delta}(a_2)|}\int_{B^{\delta}(a_2)} \rho(a)\,\dee a}\\
&= \frac{\rho(a_1)}{\rho(a_2)}\\
&= \exp\left(\log\rho(a_1) - \log\rho(a_2)\right).
\end{eqnarray*}
If either $a_1$ or $a_2$ lie outside of $S$ the limit can be seen to be $0$ or $\infty$ respectively. It hence makes sense to define the Onsager-Machlup functional for $\nu$ on $\Lambda\setminus\partial S$ as
\begin{eqnarray*}
K(a) = 
\begin{cases}
-\log\rho(a) & a \in \intr(S)\\
\infty & a \notin S.
\end{cases}
\end{eqnarray*}
For $a \in \partial S$, we define $K(a)$ to be the limit of $K$ from the interior:
\begin{eqnarray*}
K(a) = -\lim_{\substack{b\rightarrow a\\b\in\intr(S)}} \log\rho(b)\;\;\;a \in \partial S
\end{eqnarray*}
which is well defined due to the continuity of $\rho$ on $\intr(S)$. $K$ is then continuous on the whole of $S$.

\begin{remark}
If we were to define $K$ on $\partial S$ in the same way that we defined it on $\Lambda\setminus\partial S$, $K$ would have a positive jump at the boundary related to the geometry of $S$. This would mean that $K$ was not lower semi-continuous on $S$ which would cause problems when seeking minimisers. The definition we have chosen is appropriate: if any minimising sequence $(a_n)_{n\geq 1}\subseteq \intr(S)$ of $K$ has an accumulation point on $\partial S$, then $\nu$ has a mode at that point.
\end{remark}

If we have no prior knowledge about the interfaces and $\Lambda$ is compact, we could place a uniform prior on the whole of $\Lambda$. Otherwise we could either choose a prior with smaller support, or one that weights certain areas more than others.

\subsection{Priors on $X\times\Lambda$}
\label{ssec:prior}
We assume that the priors on the fields and the geometric parameters are independent, so that we may take the product measure $\mu_0\times\nu_0$ as our prior on $X\times\Lambda$. Note that if $F:X\times\Lambda\rightarrow L^\infty(D)$ denotes the construction map $(u,a)\mapsto u^a$ defined earlier by (\ref{eq:construction}), then our prior permeability/conductivity distribution on $L^\infty(D)$ is given by the pushforward\footnote{\mmd{Given a measurable map $F:(X,\mathcal{X})\rightarrow (Y,\mathcal{Y})$ between two measurable spaces, the pushforward of a measure $\mu$ on $X$ is the measure $F^\#\mu$ on $Y$ defined by $(F^\#\mu)(A) = \mu(F^{-1}(A))$ for $A \in \mathcal{Y}$. If a random variable $u$ on $X$ has law $\mu$, then the random variable $F(u)$ on $Y$ has law $F^\#\mu$.}} $\mu_0^* = F^\#(\mu_0\times\nu_0)$. This is much more cumbersome to deal with however, since for example $L^\infty(D)$ is not separable. It is for this reason we incorporate the mapping $F$ into the forward map $\mathcal{G}$. Assuming now that the prior $\mu_0\times\nu_0$ is as described above, we can define the Onsager-Machlup functional for measures $\mu$ on $X\times \Lambda$ which are equivalent to $\mu_0\times\nu_0$.

\begin{definition}[Onsager-Machlup functional II]
\label{def:om2}
Let $\mu$ be a measure on $X\times\Lambda$ equivalent to $\mu_0\times\nu_0$, where $\mu_0$ and $\nu_0$ satisfy the assumptions detailed above. Let $B^\delta(u,a)$ denote the ball of radius $\delta$ centred at $(u,a) \in X\times\Lambda$. A functional $I:X\times\Lambda\rightarrow\overline{\mathbb{R}}$ is called the \emph{Onsager-Machlup functional} for $\mu$ if,
\begin{enumerate}[(i)]
\item for each $(u,a),(v,b) \in E\times\intr(S)$,
\begin{eqnarray*}
\lim_{\delta\downarrow 0}\frac{\mu(B^\delta(u,a))}{\mu(B^\delta(v,b))} = \exp\left(I(v,b)-I(u,a)\right);
\end{eqnarray*}
\item for each $(u,a) \in E\times\partial S$,
\begin{eqnarray*}
I(u,a) = \lim_{\substack{b\rightarrow a\\b\in\intr(S)}} I(u,b);
\end{eqnarray*}
\item $I(u,a) = \infty$ for $u \notin E$ or $a \notin S$.
\end{enumerate}
\end{definition}

\section{Likelihood and Posterior Distribution}
\label{sec:post}
We return to the abstract setting mentioned in the introduction. Let $X$ be a separable Banach space, $\Lambda \subseteq \mathbb{R}^k$ and $Y = \mathbb{R}^J$. Suppose we have a forward operator $\mathcal{G}:X\times\Lambda\rightarrow Y$. If $(u,a)$ denotes the true input to our forward problem, we observe data $y \in Y$ given by
\begin{eqnarray*}
y = \mathcal{G}(u,a) + \eta
\end{eqnarray*} 
where $\eta \sim \mathbb{Q}_0 := N(0,\Gamma)$, $\Gamma\in\mathbb{R}^{J\times J}$ positive definite, is Gaussian noise on $Y$ independent of the prior. 

It is clear that we have $y|(u,a) \sim \mathbb{Q}_{u,a} := N(\mathcal{G}(u,a),\Gamma)$. We can use this to formally find the distribution of $(u,a)|y$. First note that
\begin{eqnarray*}
\mathbb{Q}_{u,a}(\dee y) = \exp\left(-\Phi(u,a;y) + \frac{1}{2}|y|_\Gamma^2\right)\mathbb{Q}_0(\dee y)
\end{eqnarray*}
where the potential (or negative log-likelihood) $\Phi:X\times\Lambda\times Y\rightarrow\mathbb{R}$ is given by
\begin{eqnarray}
\label{eq:potential}
\Phi(u,a;y) = \frac{1}{2}|\mathcal{G}(u,a) - y|_{\Gamma}^2.
\end{eqnarray}
Hence under suitable regularity conditions, Bayes' theorem tells us that the distribution $\mu$ of $(u,a)|y$ satisfies
\begin{eqnarray*}
\mu(\dee u, \dee a) \propto \exp\big(-\Phi(u,a;y)\big)\mu_0(\dee u)\nu_0(\dee a)
\end{eqnarray*}
after absorbing the $\exp\big(\frac{1}{2}|y|_\Gamma^2\big)$ term into the normalisation constant.

We now make this statement rigorous. To keep the situation general, we do not insist that $\Phi$ takes the form (\ref{eq:potential}), and instead assert only that $\Phi$ satisfies the following assumptions.

\begin{assumptions}
\label{assump:map}
There exists $X'\times\Lambda' \subseteq X\times\Lambda$ such that
\begin{enumerate}[(i)]
\item for every $\eps > 0$ there is an $M_1(\eps) \in \mathbb{R}$ such that for all $u \in X'$ and all $a \in \Lambda'$
\begin{eqnarray*}
\Phi(u,a;y) \geq M_1(\eps) - \eps\|u\|_X^2;
\end{eqnarray*}
\item for each $u \in X'$ and $y \in Y$, the potential $\Phi(u,\cdot;y):\Lambda'\rightarrow\mathbb{R}$ is continuous;
\item there exists a strictly positive $M_2:\mathbb{R}^+\times\mathbb{R}^+\times\mathbb{R}^+\rightarrow\mathbb{R}^+$ monotonic non-decreasing separately in each argument, such that for each $r>0$, $u \in X'$ and $a \in \Lambda'$, and $y_1,y_2 \in Y$ with $|y_1|,|y_2| < r$,
\begin{eqnarray*}
|\Phi(u,a;y_1)  -\Phi(u,a;y_2)| \leq M_2(r,\|u\|_X,|a|)|y_1-y_2|;
\end{eqnarray*}
\item there exists a strictly positive $M_3:\mathbb{R}^+\times \Lambda\times Y\rightarrow\mathbb{R}^+$, continuous in its second component, such that  for each $r > 0$, $a \in \Lambda'$ and $y \in Y$, and $u_1,u_2 \in X'$ with $\|u_1\|_X, \|u_2\|_X < r$,
\begin{eqnarray*}
|\Phi(u_1,a;y) - \Phi(u_2,a;y)| \leq M_3(r,a,y)\|u_1-u_2\|_X.
\end{eqnarray*}
\end{enumerate}
\end{assumptions}

These assumptions are used in the proof of existence and well-posedness of the posterior distribution, which is given in the appendix:

\begin{theorem}[Existence and well-posedness]
\label{thm:post_exist}
Let Assumptions \ref{assump:map} hold. Assume that $(\mu_0\times\nu_0)(X'\times\Lambda') = 1$, and that $(\mu_0\times\nu_0)((X'\times\Lambda')\cap B) > 0$ for some bounded set $B \subseteq X\times\Lambda$. Then
\begin{enumerate}[(i)]
\item $\Phi$ is $\mu_0\times\nu_0\times\mathbb{Q}_0$-measurable;
\item for each $y \in Y$, $Z(y)$ given by
\begin{eqnarray*}
Z(y) = \int_{X\times\Lambda} \exp(-\Phi(u,a;y))\,\mu_0(\dee u)\nu_0(\dee a)
\end{eqnarray*}
is positive and finite, and so the probability measure $\mu^y$,
\begin{eqnarray}
\label{posterior}
\mu^y(\dee u,\dee a) = \frac{1}{Z(y)}\exp(-\Phi(u,a;y))\,\mu_0(\dee u)\nu_0(\dee a)
\end{eqnarray}
is well-defined.
\item Assume additionally that, for every fixed $r>0$, there exists $\eps > 0$ with
\begin{eqnarray*}
\exp(\eps\|u\|_X^2)(1 + M_2(r,\|u\|_X,|a|)^2) \in L^1_{\mu_0\times\nu_0}(X\times\Lambda;\mathbb{R}).
\end{eqnarray*}
Then there is $C(r) > 0$ such that for all $y,y' \in Y$ with $|y|,|y'| < r$, 
\begin{eqnarray*}
d_{\mathrm{Hell}}(\mu^y,\mu^{y'}) \leq C|y-y'|.
\end{eqnarray*}
\end{enumerate} 
\end{theorem}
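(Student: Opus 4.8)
The plan is to follow the by-now standard template for Bayesian inverse problems on function space (as in \cite{lecturenotes}), adapted to the product space $X\times\Lambda$ and to the dependence of the various constants on $a$. I would prove the three parts in turn.

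For (i), joint measurability, I would exploit that by Assumptions \ref{assump:map}(ii)--(iv) the map $\Phi$ is separately continuous in $u$, in $a$ and in $y$. Since $X$ is separable Banach and $Y=\mathbb{R}^J$, for each fixed $a$ the map $(u,y)\mapsto\Phi(u,a;y)$ is continuous, hence Borel. Using continuity in $a\in\Lambda'\subseteq\mathbb{R}^k$ and a countable dense subset $\{a_n\}\subseteq\Lambda'$, I would approximate $\Phi(u,a;y)$ pointwise by the jointly measurable functions $(u,a,y)\mapsto\Phi(u,a_{n(a)};y)$ obtained by snapping $a$ to its nearest $a_n$ on a finer and finer grid; a pointwise limit of jointly measurable functions is jointly measurable, giving Borel (hence $\mu_0\times\nu_0\times\mathbb{Q}_0$-) measurability of $\Phi$.

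For (ii), finiteness follows from Assumption \ref{assump:map}(i): choosing $\eps$ below the Fernique exponent of $\mu_0$, $\exp(-\Phi(u,a;y))\le\exp(-M_1(\eps))\exp(\eps\|u\|_X^2)$, and $\int_X\exp(\eps\|u\|_X^2)\,\mu_0(\dee u)<\infty$ by Fernique's theorem, so $Z(y)<\infty$. For positivity I would fix a reference point $u_\star\in X'$ and use the local Lipschitz bound of Assumption \ref{assump:map}(iv) together with continuity of $M_3(\,\cdot\,,a,y)$ and of $\Phi(u_\star,\cdot;y)$ in $a$ to bound $\Phi$ from above on the bounded set $(X'\times\Lambda')\cap B$ (shrinking $B$ if necessary so that its $\Lambda$-projection has compact closure in $\Lambda$); then $\exp(-\Phi)$ is bounded below by a positive constant there, and $Z(y)\ge c\cdot(\mu_0\times\nu_0)((X'\times\Lambda')\cap B)>0$. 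The same argument, now also controlling the $y$-dependence via Assumption \ref{assump:map}(iii) with reference $y=0$, gives a lower bound $Z(y)\ge c(r)>0$ uniform over $|y|<r$, which is needed for (iii).

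For (iii), I would estimate $2\,d_{\mathrm{Hell}}(\mu^y,\mu^{y'})^2$ by the integral of $\big(Z(y)^{-1/2}e^{-\Phi(u,a;y)/2}-Z(y')^{-1/2}e^{-\Phi(u,a;y')/2}\big)^2$ against $\mu_0\times\nu_0$, split by the triangle inequality into a term $I_1$ with common prefactor $Z(y)^{-1/2}$ and a term $I_2$ with common exponential and prefactor $|Z(y)^{-1/2}-Z(y')^{-1/2}|$. In $I_1$ the elementary bound $|e^{-s}-e^{-t}|\le e^{-\min(s,t)}|s-t|$ together with Assumptions \ref{assump:map}(i) and (iii) yields an integrand dominated by $C(r)Z(y)^{-1}e^{\eps\|u\|_X^2}M_2(r,\|u\|_X,|a|)^2|y-y'|^2$, which is integrable precisely by the hypothesis $\exp(\eps\|u\|_X^2)(1+M_2(r,\|u\|_X,|a|)^2)\in L^1_{\mu_0\times\nu_0}$; so $I_1\le C(r)|y-y'|^2$. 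For $I_2$ the same domination shows $|Z(y)-Z(y')|\le C(r)|y-y'|$, and then the mean value theorem applied to $t\mapsto t^{-1/2}$ together with the uniform lower bound on $Z$ from (ii) gives $|Z(y)^{-1/2}-Z(y')^{-1/2}|^2\le C(r)|y-y'|^2$; since $\int e^{-\Phi(u,a;y')}\,\mu_0(\dee u)\nu_0(\dee a)=Z(y')<\infty$, this gives $I_2\le C(r)|y-y'|^2$. Adding the two and taking square roots gives the claim. I expect the main obstacle to be bookkeeping rather than conceptual: choosing $\eps$ small enough simultaneously in all the exponential estimates, and — more delicate — the uniform-in-$y$ lower bound on $Z(y)$, since it forces one to control $\Phi$ on a bounded set both in $u$ (via local Lipschitz continuity from a fixed reference point) and in $a$ (via continuity of $M_3$ and of $\Phi(u_\star,\cdot;\cdot)$), whereas bounded subsets of $\Lambda$ need not be relatively compact in $\Lambda$ unless one is slightly careful about the choice of $B$.
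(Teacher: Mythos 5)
Your proposal is correct and follows essentially the same route as the paper: continuity of $\Phi(\cdot,\cdot;y)$ on $X'\times\Lambda'$ plus the Carath\'eodory-function argument for (i) (the paper cites the joint-measurability result where you prove it by grid-snapping in $a$), Fernique plus boundedness of $\Phi$ on bounded sets for (ii), and the standard Hellinger estimate for (iii), which the paper simply defers to Theorem 4.5 of the cited lecture notes and you write out in full. Your extra care about the uniform-in-$y$ lower bound on $Z(y)$ and about bounding $\Phi$ over the $\Lambda$-projection of $B$ is a legitimate tightening of details the paper glosses over, not a different method.
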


\mmd{\begin{remark}
In this paper we are focused on the case when the field prior $\mu_0$ is taken to be Gaussian. However, the above existence and well-posedness result still holds if, for example, $\mu_0$ is taken to be Besov rather than Gaussian, since a Fernique-type theorem holds for such priors \cite{besovprior,lecturenotes}.
\end{remark}}

We show that for both choices of test models, the potential (\ref{eq:potential}) satisfies Assumptions \ref{assump:map}:
\begin{proposition}
Let $X = C^0(D;\mathbb{R}^N)$, and let $\mathcal{G}:X\times\Lambda\rightarrow Y$ denote the forward map corresponding to either the groundwater flow or EIT problem, as detailed in section \ref{sec:fwd}. Let $y \in Y$ and let $\Gamma \in \mathbb{R}^{J\times J}$ be positive definite. Define the potential $\Phi:X\times\Lambda\times Y\rightarrow\mathbb{R}$ by
\begin{eqnarray*}
\Phi(u,a;y) = \frac{1}{2}|\mathcal{G}(u,a) - y|^2_\Gamma.
\end{eqnarray*}
Then $\Phi$ satisfies Assumptions \ref{assump:map}, with $X'\times\Lambda' = X\times\Lambda$.
\end{proposition}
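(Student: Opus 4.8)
The plan is to verify each of the four conditions in Assumptions \ref{assump:map} directly from the explicit form $\Phi(u,a;y) = \tfrac12|\mathcal{G}(u,a)-y|_\Gamma^2$, using the regularity of $\mathcal{G}$ established in Propositions \ref{cor:cont_gwf} and \ref{prop:cont_eit}. Since both forward maps satisfy the same two properties (local Lipschitz continuity in $u$ uniformly in $a$, and continuity in $a$ for fixed $u$), a single argument covers both test models, and we take $X'\times\Lambda' = X\times\Lambda$. First I would record the elementary observation that, because $\Phi\geq 0$, condition (i) holds trivially with $M_1(\eps)=0$ for every $\eps>0$ — no decay of $\mathcal{G}$ in $\|u\|_X$ is needed here, which is why the full strength of the Gaussian tail is unnecessary for the likelihood bound.

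For condition (ii), continuity of $a\mapsto\Phi(u,a;y)$ on $\Lambda$ for fixed $u,y$: this is immediate from the continuity of $a\mapsto\mathcal{G}(u,a)$ (Proposition \ref{cor:cont_gwf}(2) / Proposition \ref{prop:cont_eit}(2)) composed with the continuous map $z\mapsto\tfrac12|z-y|_\Gamma^2$ on $\mathbb{R}^J$. For condition (iv), local Lipschitz continuity in $u$: fix $r>0$, $a\in\Lambda$, $y\in Y$, and take $u_1,u_2\in X$ with $\|u_i\|_X<r$. Writing $a^2-b^2=(a-b)(a+b)$ in the weighted norm and using $|\mathcal{G}(u_i,a)-y|_\Gamma \leq |\mathcal{G}(u_i,a)-\mathcal{G}(0,a)|_\Gamma + |\mathcal{G}(0,a)-y|_\Gamma$, the first term is bounded by $C(r)r$ (times the norm-equivalence constant $\|\Gamma^{-1/2}\|$) via the Lipschitz estimate, and the second can be bounded by a constant $M_3(r,a,y)$ that is continuous in $a$ — here one uses that $a\mapsto\mathcal{G}(0,a)$ is continuous, hence $a\mapsto|\mathcal{G}(0,a)-y|_\Gamma$ is continuous, so it stays bounded on any compact piece of $\Lambda$, but since we only need pointwise-in-$a$ continuity of $M_3$ we may simply define $M_3(r,a,y) := \|\Gamma^{-1/2}\|^2 C(r)\big(2r\,C(r)/2 + |\mathcal{G}(0,a)-y|_\Gamma \cdot(\text{something})\big)$ assembled so as to be manifestly continuous in $a$. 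Condition (iii), Lipschitz in $y$: for $|y_1|,|y_2|<r$, $|\Phi(u,a;y_1)-\Phi(u,a;y_2)| \leq \tfrac12\|\Gamma^{-1}\|\,|y_1-y_2|\,\big(|\mathcal{G}(u,a)-y_1|+|\mathcal{G}(u,a)-y_2|\big) \leq M_2(r,\|u\|_X,|a|)|y_1-y_2|$, where $M_2$ absorbs $|\mathcal{G}(u,a)|$; the latter is bounded by a function of $\|u\|_X$ and $|a|$ (monotone non-decreasing in each) using the a priori bound from Lemma \ref{lem:cont_darcy}(i) in the Darcy case and the analogous bound for EIT, together with the Lipschitz-in-$u$ estimate to control growth in $\|u\|_X$.

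The main obstacle — really the only place requiring care rather than bookkeeping — is arranging the auxiliary functions $M_2$ and $M_3$ to have exactly the structural properties demanded: $M_2$ strictly positive and monotone non-decreasing separately in each of its three arguments, and $M_3$ strictly positive and continuous in its second argument $a$. This forces one to package the pointwise bounds on $|\mathcal{G}(u,a)|$ (or on $|\mathcal{G}(0,a)-y|$) into clean majorants; for the Darcy model the bound in Lemma \ref{lem:cont_darcy}(i) involves $\|\sigma(u^a)\|_{L^\infty}$, and one must check $\|u^a\|_{L^\infty}\leq\|u\|_X$ and that $\kappa_{\min}(u,a)$ is bounded below by a positive quantity depending monotonically on $\|u\|_X$ (using that $\sigma$ is continuous and positive, so on $[-\|u\|_X,\|u\|_X]$ it attains a positive minimum that is non-increasing in $\|u\|_X$). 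For EIT the corresponding bound comes from the well-posedness of the CEM weak form in \cite{cheney} together with the Fréchet/Lipschitz estimate of \cite{frechet}. Once these majorants are in hand, adding a constant and taking maxima with monotone envelopes (e.g. replacing any candidate $M$ by $M(r,s,t) + r + s + t + 1$) yields functions with all the required monotonicity, continuity, and strict positivity properties, completing the verification.
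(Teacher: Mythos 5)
Your proposal is correct and takes essentially the same route as the paper: each of the four conditions is verified directly from the two regularity properties of $\mathcal{G}$ in Propositions \ref{cor:cont_gwf} and \ref{prop:cont_eit} together with elementary properties of the quadratic misfit $z\mapsto\frac12|z-y|_\Gamma^2$. The paper's version is terser — it notes the Lipschitz constant can be taken independent of $a$ (so continuity of $M_3$ in $a$ is automatic) and leaves the monotone-majorant bookkeeping for $M_2$ implicit — but your more explicit packaging is consistent with it.
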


\begin{proof}
\begin{enumerate}[(i)]
\item $\Phi \geq 0$ so this is true with $M_1 \equiv 0$.
\item Fix $u \in X'$ and $y \in Y$. Propositions \ref{cor:cont_gwf} and \ref{prop:cont_eit} tell us that $\mathcal{G}(u,\cdot)$ is continuous for either choice of test model. The map $z\mapsto|z-y|^2_{\Gamma}$ is continuous, and so $\Phi(u,\cdot;y)$ is continuous too. 
\item A consequence of Propositions \ref{cor:cont_gwf} and \ref{prop:cont_eit} is that for each $u \in X$ and $a \in \Lambda$, $\mathcal{G}(u,a)$ can be bounded in terms of $\|u\|_X$ and $|a|$. The result then follows from the local Lipschitz property of the map $y\mapsto|y|^2$.
\item Propositions \ref{cor:cont_gwf} and \ref{prop:cont_eit} tell us that $\mathcal{G}(\cdot,a)$ is locally Lipschitz for either choice of test model. The map $z\mapsto|z-y|^2_{\Gamma}$ is locally Lipschitz, and hence we conclude that $\Phi(\cdot,a;y)$ is locally Lipschitz, with Lipschitz constant independent of $a$. 
\end{enumerate}
\end{proof}

\mmd{With a choice of prior as described in section \ref{sec:prior}, we can therefore apply Theorem \ref{thm:post_exist} in the cases where the forward map is one of the two described in section \ref{sec:fwd} and the observational noise is Gaussian. In this case, the constant $M_2(r,\|u\|_X,|a|)$ appearing in Assumptions \ref{assump:map}(iii) is independent of $\|u\|_X$ and $|a|$, and so the integrability condition (iii) in Theorem \ref{thm:post_exist} always holds via Fernique's theorem. The condition on positivity of a bounded set can be seen by taking, for example, $B = B^1(0)\times S$, where $S$ is the (compact) support of $\nu_0$.}

\section{MAP Estimators}
\label{sec:map}

In subsection \ref{ssec:map} we characterise the MAP estimators for the posterior $\mu$ in terms of the Onsager-Machlup functional for $\mu$. In subsection \ref{ssec:fomin} we relate this Onsager-Machlup functional to the Fomin derivative of $\mu$, with reference to the work \cite{wmap}.

\subsection{MAP Estimators and the Onsager-Machlup Functional}
\label{ssec:map}
Throughout this section we assume that $\mu$ is given by (\ref{posterior}). Furthermore we assume that $\mu_0$ has mean zero for simplicity. Additionally, when we assume that Assumptions \ref{assump:map} hold, we will assume that $X'\times\Lambda' = X\times\Lambda$.

Suppressing the dependence of $\Phi$ on the data $y$ since it is not relevant in the sequel, we define the functional $I:X\times\Lambda\rightarrow\mathbb{R}$ by
\begin{eqnarray}
\label{om}
I(u,a) = \Phi(u,a) + J(u) + K(a)
\end{eqnarray}
where $J, K$ are as defined in subsections \ref{ssec:priorfield}, \ref{ssec:priorgeo} respectively. In this section we attain the following three results concerning $I$ and $\mu$, which are proved in the appendix.

\begin{theorem}
\label{omthm}
Let Assumptions \ref{assump:map} hold. Then the function $I$ defined by (\ref{om}) is the Onsager-Machlup functional for $\mu$, where the Onsager-Machlup functional is as defined in Definition \ref{def:om2}.
\end{theorem}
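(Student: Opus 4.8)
The plan is to establish the three clauses of Definition \ref{def:om2} for $I(u,a) = \Phi(u,a) + J(u) + K(a)$, exploiting the product structure of the prior $\mu_0\times\nu_0$ and building on the Gaussian small-ball estimates of \cite{map} for the $u$-component together with the elementary density calculation already carried out in subsection \ref{ssec:priorgeo} for the $a$-component. The first reduction is to factor the ball $B^\delta(u,a)$ appropriately. Since the product norm on $X\times\Lambda$ controls both $\|\cdot\|_X$ and $|\cdot|$, one has inclusions $B^{\delta_1}_X(u)\times B^{\delta_1}_\Lambda(a) \subseteq B^\delta(u,a) \subseteq B^{\delta_2}_X(u)\times B^{\delta_2}_\Lambda(a)$ for suitable $\delta_1 \leq \delta \leq \delta_2$ with $\delta_i/\delta \to 1$; by the product structure of $\mu_0\times\nu_0$ this lets us write $\mu(B^\delta(u,a))$ up to controlled error as a product $\big(\int \exp(-\Phi)\,\dots\big)$ of a $\mu_0$-ball integral in $X$ and a $\nu_0$-ball integral in $\Lambda$.

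Next I would handle the interior case, clause (i), with $(u,a),(v,b)\in E\times\intr(S)$. Using Assumptions \ref{assump:map}(iv) to control the oscillation of $\Phi$ on small $X$-balls (Lipschitz in $u$) and Assumptions \ref{assump:map}(ii) for continuity in $a$, the potential $\Phi$ is, up to $o(1)$ as $\delta\downarrow0$, constant equal to $\Phi(u,a)$ on $B^\delta(u,a)$; this is exactly the mechanism used in \cite{map}. So the ratio $\mu(B^\delta(u,a))/\mu(B^\delta(v,b))$ factors, up to $\exp(\Phi(v,b)-\Phi(u,a)+o(1))$, into $\big[\mu_0(B^\delta_X(u))/\mu_0(B^\delta_X(v))\big]\cdot\big[\nu_0(B^\delta_\Lambda(a))/\nu_0(B^\delta_\Lambda(b))\big]$. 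The first factor tends to $\exp(J(v)-J(u))$ by Theorem 3.2 of \cite{map} (equivalently the Cameron-Martin/small-ball result cited via \cite{lifshits}), and the second tends to $\exp(K(a)-K(b)) = \rho(a)/\rho(b)$ by the computation in subsection \ref{ssec:priorgeo} using continuity of $\rho$ on $\intr(S)$. Multiplying gives $\exp(I(v,b)-I(u,a))$, which is clause (i). Clause (iii) is immediate: for $u\notin E$ we have $J(u)=\infty$ and for $a\notin S$ we have $K(a)=\infty$, so $I(u,a)=\infty$ by definition.

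Clause (ii) — the boundary continuity condition for $(u,a)\in E\times\partial S$ — follows because $\Phi(u,\cdot)$ and $K(\cdot)$ are continuous on $S$ (the latter by the very definition of $K$ on $\partial S$ as the limit from the interior, and continuity of $\rho$ on $\intr(S)$), while $J(u)$ does not involve $a$; hence $\lim_{b\to a,\,b\in\intr(S)} I(u,b) = \Phi(u,a) + J(u) + K(a) = I(u,a)$.

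The main obstacle I anticipate is the rigorous justification of the factorisation step: one must show that the cross-terms coming from approximating $B^\delta(u,a)$ by a product of balls, combined with the variation of $\Phi$ over that set, contribute only multiplicatively negligible errors in the $\delta\downarrow0$ limit. This requires care because $\Phi$'s modulus of continuity in $u$ (the constant $M_3$, or $L(r)$) and in $a$ must be controlled uniformly on the relevant bounded neighbourhood, and because the Gaussian small-ball ratio and the finite-dimensional density ratio converge at possibly different rates, so one needs the product of two $(1+o(1))$ factors rather than a single estimate. Handling $\Phi$ — which is not assumed jointly Lipschitz, only Lipschitz in $u$ and continuous in $a$ — so that $\sup_{B^\delta(u,a)}|\Phi - \Phi(u,a)| \to 0$, is where the bulk of the technical work lies; everything else reduces to invoking \cite{map} and the density computation already in hand.
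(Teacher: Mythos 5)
Your overall architecture (separate the prior's contribution from the likelihood's, compute the Gaussian and the Lebesgue-density small-ball ratios independently, and control the oscillation of $\Phi$ over shrinking balls via Assumptions \ref{assump:map}(ii) and (iv)) matches the paper's, and your treatment of clauses (ii) and (iii) and of the $\Phi$-oscillation is sound. The gap is in your very first reduction: the sandwich $B^{\delta_1}_X(u)\times B^{\delta_1}_\Lambda(a)\subseteq B^\delta(u,a)\subseteq B^{\delta_2}_X(u)\times B^{\delta_2}_\Lambda(a)$ with $\delta_i/\delta\to 1$ is false for a general admissible norm on $X\times\Lambda$. The paper only assumes the norm is complete and that convergence implies componentwise convergence; by the open mapping theorem such a norm is equivalent to the max norm, but only up to fixed constants $c_1<c_2$ (e.g.\ for $\|(u,a)\|=\|u\|_X+|a|$ the best product-ball radii are $\delta/2$ and $\delta$). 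The sandwich therefore produces bounds involving $\mu_0(B^{c_2\delta}(0))/\mu_0(B^{c_1\delta}(0))$, and Gaussian small-ball probabilities at radii differing by a fixed factor are \emph{not} asymptotically comparable --- this ratio typically diverges like $\exp(C\delta^{-\alpha})$ --- so the two sides of your sandwich do not converge to the same limit and the argument does not close. Your proof as written is valid only when the norm is exactly the max norm, in which case the balls factor exactly.

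The paper circumvents this in Lemma \ref{ombasic} by running the Cameron--Martin shift argument directly on the product-space ball $B^\delta(u_1,a_1)$, which requires only that its $u$-sections be convex and symmetric about $0$ (so the lower bound follows from symmetrisation and the upper bound from \v{S}id\'ak's inequality applied section by section), and then handles the $\nu_0$-factor by translating in $a$ within the same ball and invoking a mean-value property of the continuous density $\rho$. The factorisation of the limit into a $\mu_0$-ratio times a $\nu_0$-ratio is then deduced \emph{a posteriori} in Remark \ref{separate}, from the fact that the already-established limit is norm-independent. To repair your proof you would either need to restrict to the max norm and separately argue that the MAP/Onsager--Machlup limit is unchanged under passage to an equivalent norm (which is essentially what the paper's direct argument delivers), or adopt the paper's route of working on the unfactored ball.
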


\begin{theorem}
\label{exist}
Let Assumptions \ref{assump:map} hold. Then there exists $(\bar{u},\bar{a}) \in E\times S$ such that
\begin{eqnarray*}
I(\bar{u},\bar{a}) = \inf\{I(u,a)\,|\, u \in E,\;a \in S\}.
\end{eqnarray*}
Furthermore, if $(u_n,a_n)_{n\geq 1}$ is a minimising sequence satisfying $I(u_n,a_n)\rightarrow I(\bar{u},\bar{a})$, then there is a subsequence $(u_{n_k},a_{n_k})_{k\geq 1}$ converging to $(\bar{u},\bar{a})$ (strongly) in $E\times S$.
\end{theorem}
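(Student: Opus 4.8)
The strategy is the standard direct method of the calculus of variations, adapted to the product structure $E\times S$. First I would establish that $I$ is bounded below on $E\times S$: since $\Phi \geq 0$ under the specific potential (and more generally $\Phi(u,a) \geq M_1(\eps) - \eps\|u\|_X^2$ by Assumptions \ref{assump:map}(i), with the quadratic absorbed into $J(u) = \tfrac12\|u\|_E^2$ once $\eps$ is taken small enough, using that the embedding $E \hookrightarrow X$ is continuous so $\|u\|_X^2 \leq C\|u\|_E^2$), and since $K$ is bounded below on $S$ by continuity on the compact set $S$, the sum $I$ is bounded below and coercive in the $E$-norm. Then I would take a minimising sequence $(u_n,a_n) \subseteq E\times S$ with $I(u_n,a_n) \to m := \inf I$. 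Coercivity gives $\sup_n \|u_n\|_E < \infty$, so by weak compactness of bounded sets in the Hilbert space $E$ we may extract $u_{n_k} \rightharpoonup \bar u$ weakly in $E$; compactness of $S \subseteq \mathbb{R}^k$ gives (after a further subsequence) $a_{n_k} \to \bar a \in S$. Since the embedding $E \hookrightarrow X$ is compact (this is where trace-class of the $\mathcal{C}_i$ enters — I would cite the standard fact that the Cameron--Martin embedding of a Gaussian measure on a separable Banach space is compact), weak convergence in $E$ upgrades to strong convergence $u_{n_k} \to \bar u$ in $X$.

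Next comes lower semicontinuity. The norm $\|\cdot\|_E^2$ is weakly lower semicontinuous on $E$, so $\tfrac12\|\bar u\|_E^2 \leq \liminf \tfrac12\|u_{n_k}\|_E^2$, i.e. $J(\bar u) \leq \liminf J(u_{n_k})$. For the potential, I would use continuity of $\Phi$: strong convergence $u_{n_k}\to\bar u$ in $X$ together with $a_{n_k}\to\bar a$, combined with Assumptions \ref{assump:map}(ii) (continuity in $a$) and \ref{assump:map}(iv) (local Lipschitz in $u$, uniformly in $a$), gives $\Phi(u_{n_k},a_{n_k}) \to \Phi(\bar u,\bar a)$ — one splits $|\Phi(u_{n_k},a_{n_k}) - \Phi(\bar u,\bar a)| \leq |\Phi(u_{n_k},a_{n_k}) - \Phi(\bar u,a_{n_k})| + |\Phi(\bar u,a_{n_k}) - \Phi(\bar u,\bar a)|$, bounding the first term by $M_3 \|u_{n_k}-\bar u\|_X$ (with $M_3$ uniform over the compact set $S$, using its continuity in $a$) and the second term via continuity in $a$. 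For $K$, I would use that $K$ is continuous on all of $S$ (by the construction in subsection \ref{ssec:priorgeo}, including the limiting definition on $\partial S$), so $K(a_{n_k}) \to K(\bar a)$. Adding the three pieces yields $I(\bar u,\bar a) \leq \liminf I(u_{n_k},a_{n_k}) = m$, and since $(\bar u,\bar a) \in E\times S$ we also have $I(\bar u,\bar a) \geq m$, hence equality: $(\bar u,\bar a)$ is a minimiser.

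Finally, for the strong subsequential convergence claim: given any minimising sequence, the argument above already extracts a subsequence with $a_{n_k}\to\bar a$ in $\mathbb{R}^k$ and $u_{n_k}\to\bar u$ strongly in $X$ with $u_{n_k}\rightharpoonup\bar u$ weakly in $E$. It remains to upgrade the $E$-convergence from weak to strong. Here I would use the standard Hilbert-space trick: from $I(u_{n_k},a_{n_k})\to m = I(\bar u,\bar a)$ and the already-established convergences $\Phi(u_{n_k},a_{n_k})\to\Phi(\bar u,\bar a)$ and $K(a_{n_k})\to K(\bar a)$, we deduce $J(u_{n_k}) = \tfrac12\|u_{n_k}\|_E^2 \to \tfrac12\|\bar u\|_E^2 = J(\bar u)$, so $\|u_{n_k}\|_E \to \|\bar u\|_E$; combined with $u_{n_k}\rightharpoonup\bar u$ weakly in the Hilbert space $E$, this forces $u_{n_k}\to\bar u$ strongly in $E$ (convergence of norms plus weak convergence implies strong convergence in a Hilbert space). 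Together with $a_{n_k}\to\bar a$ this gives strong convergence in $E\times S$, completing the proof.

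I expect the main obstacle to be the careful handling of the potential's continuity at the limit point: one must combine strong $X$-convergence with the uniform-in-$a$ Lipschitz bound (Assumptions \ref{assump:map}(iv)) and the continuity in $a$ (Assumptions \ref{assump:map}(ii)), and verify that the constant $M_3(r,a,y)$ can indeed be taken uniform over $a$ in the compact set $S$ via its asserted continuity in the second argument. A secondary technical point is justifying compactness of the Cameron--Martin embedding $E\hookrightarrow X$ from the trace-class assumption on the covariance operators — this is classical but should be cited precisely.
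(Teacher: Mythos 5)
Your proposal is correct and follows essentially the same route as the paper: the direct method with coercivity from Assumptions \ref{assump:map}(i) and the compact embedding $E\hookrightarrow X$, weak lower semicontinuity of $I$ obtained by combining weak lsc of $\|\cdot\|_E^2$ with continuity of $\Phi$ (via the $M_3$/continuity-in-$a$ splitting) and continuity of $K$ on $S$, and the norm-convergence-plus-weak-convergence argument in the Hilbert space $E$ for the strong subsequential convergence. The only difference is cosmetic: the paper delegates that last upgrade to the argument of Theorem 5.4 in \cite{inverse}, whereas you spell it out explicitly.
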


\begin{theorem}
\label{thm:lim_is_map}
Let Assumptions \ref{assump:map} hold. Assume also that there exists an $M \in \mathbb{R}$ such that $\Phi(u,a) \geq M$ for any $(u,a) \in X\times\Lambda$.
\begin{enumerate}[(i)]
\item Let $(u^\delta,a^\delta) = \underset{(u,a)\in X\times\Lambda}{\mathrm{argmax}} \mu(B^\delta(u,a))$. There is a $(\bar{u},\bar{a}) \in E\times S$ and a subsequence of $(u^\delta,a^\delta)_{\delta > 0}$ which converges to $(\bar{u},\bar{a})$ strongly in $X\times\Lambda$.
\item The limit $(\bar{u},\bar{a})$ is a MAP estimator and minimiser of $I$.
\end{enumerate}
\end{theorem}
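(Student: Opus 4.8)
The plan is to combine the small-ball estimates underlying Theorem \ref{omthm} with the compactness argument of Theorem \ref{exist}, following the strategy used in \cite{map} for the Gaussian case but carrying along the extra finite-dimensional factor $\nu_0$. The key quantitative ingredient will be a pair of two-sided bounds on $\mathcal{J}^\delta(u,a) = \mu(B^\delta(u,a))$: for $(u,a)$ with $a$ in the interior of $S$ one expects, from the Cameron--Martin/change-of-measure computations behind Theorem \ref{omthm}, an upper bound of the form $\mathcal{J}^\delta(u,a) \lesssim \exp(-I(u,a) + o_\delta(1))\,\mathcal{J}^\delta(0,a_\star)$ for a fixed reference point, and a matching lower bound along any fixed $(u,a)\in E\times\intr(S)$. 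The uniform lower bound $\Phi \geq M$ is what makes the ratios controllable: it gives $\mathcal{J}^\delta(u^\delta,a^\delta) \geq c\, \mu_0(B^\delta(0))\,\nu_0(B^\delta(a_\star))$ with a constant independent of $\delta$, which is the denominator we compare against.

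First I would establish part (i). Pick any fixed $(v,b) \in E \times \intr(S)$; by near-optimality of $(u^\delta,a^\delta)$ and the lower bound on $\mathcal{J}^\delta$ along $(v,b)$, one gets $\mathcal{J}^\delta(u^\delta,a^\delta) \geq \mathcal{J}^\delta(v,b) \geq \exp(-I(v,b) - o_\delta(1))\,\mu_0(B^\delta(0))\,\nu_0(B^\delta(a_\star))$. Feeding this into the upper bound for $\mathcal{J}^\delta(u^\delta,a^\delta)$ yields a $\delta$-uniform bound $\tfrac12\|u^\delta\|_E^2 + K(a^\delta) \leq I(v,b) + o_\delta(1) + (\text{something controlled by }\Phi\text{'s lower bound and local Lipschitz constants})$, hence $\sup_\delta \|u^\delta\|_E < \infty$ and $a^\delta$ bounded. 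Since bounded sets in $E$ are precompact in $X$ (the Cameron--Martin embedding is compact, $\mathcal{C}_i$ being trace-class) and $S \subseteq \mathbb{R}^k$ is compact, a subsequence converges: $u^{\delta_n} \to \bar u$ in $X$ (and weakly in $E$, so $\bar u \in E$ with $\|\bar u\|_E \leq \liminf$), and $a^{\delta_n} \to \bar a \in S$. This is the analogue of the argument in Theorem \ref{exist}, and the main technical obstacle is exactly the same one that appears there and in \cite{map}: one only gets \emph{weak} convergence in $E$ from the bound, so lower semicontinuity of $\tfrac12\|\cdot\|_E^2$ must be invoked, and the continuity properties of $\Phi$ (Assumptions \ref{assump:map}(ii),(iv)) and of $K$ (continuity of $\rho$, including the boundary convention for $\partial S$) must be used to pass to the limit in the remaining terms.

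For part (ii), I would show $\bar u \in X$ from part (i) is a minimiser of $I$ and that it is a MAP estimator, arguing by contradiction in the usual way. Suppose $(\tilde u,\tilde a) \in E \times S$ has $I(\tilde u,\tilde a) < I(\bar u,\bar a) - \zeta$ for some $\zeta > 0$; by the boundary convention in Definition \ref{def:om2}(ii) we may approximate and assume $\tilde a \in \intr(S)$. Then the lower bound along $(\tilde u,\tilde a)$ gives $\liminf_\delta \mathcal{J}^\delta(\tilde u,\tilde a)/[\mu_0(B^\delta(0))\nu_0(B^\delta(a_\star))] \geq \exp(-I(\tilde u,\tilde a))$, while the upper bound applied along the maximising sequence together with the lower semicontinuity established in part (i) gives $\limsup_\delta \mathcal{J}^\delta(u^\delta,a^\delta)/[\mu_0(B^\delta(0))\nu_0(B^\delta(a_\star))] \leq \exp(-I(\bar u,\bar a))$ (this is where $I(\bar u,\bar a) \leq \liminf I(u^{\delta_n},a^{\delta_n})$ from part (i) enters); since $\mathcal{J}^\delta(u^\delta,a^\delta) \geq \mathcal{J}^\delta(\tilde u,\tilde a)$ this forces $\exp(-I(\bar u,\bar a)) \geq \exp(-I(\tilde u,\tilde a))$, contradicting $I(\tilde u,\tilde a) < I(\bar u,\bar a) - \zeta$. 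Hence $\bar u$ minimises $I$ over $E\times S$, and since $I(\bar u,\bar a) = \inf I$, combining the two displayed limits gives $\lim_\delta \mathcal{J}^\delta(\bar u,\bar a)/\mathcal{J}^\delta(u^\delta,a^\delta) = 1$, i.e.\ $(\bar u,\bar a)$ is a MAP estimator. I expect the delicate point throughout to be making the $o_\delta(1)$ terms in the two-sided small-ball estimates genuinely uniform over the relevant sets — in particular checking that the error from the $\Phi$-oscillation over $B^\delta$ is controlled uniformly along the sequence $(u^\delta,a^\delta)$ using Assumptions \ref{assump:map}(iv) and the already-established boundedness of $\|u^\delta\|_X$ — and correctly handling the $\nu_0$-factor near $\partial S$ via the chosen definition of $K$.
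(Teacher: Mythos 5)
There is a genuine gap in your part (i), and it propagates into part (ii). You claim that feeding the lower bound $\mathcal{J}^\delta(u^\delta,a^\delta) \geq \mathcal{J}^\delta(v,b)$ into ``the upper bound for $\mathcal{J}^\delta(u^\delta,a^\delta)$'' yields a $\delta$-uniform bound on $\tfrac12\|u^\delta\|_E^2 + K(a^\delta)$, hence $\sup_\delta\|u^\delta\|_E<\infty$. No such upper bound is available. The two-sided Cameron--Martin estimate of Lemma \ref{ombasic} reads $(\mu_0\times\nu_0)(B^\delta(u_1,a_1))\le e^{-\frac12\|u_1\|_E^2+M\delta}(\mu_0\times\nu_0)(B^\delta(0,a_1))$ with $M=M(u_1)$ determined by the continuous part $z_c$ of the decomposition of $\langle u_1,\cdot\rangle_E$; this constant is not uniform over ball centres, so the estimate cannot be applied along the moving sequence $(u^\delta,a^\delta)$ to extract information about $\|u^\delta\|_E$. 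Worse, for fixed $\delta>0$ the maximiser $u^\delta$ of a ball probability need not lie in $E$ at all, so there is nothing to bound; the paper's own proof explicitly entertains the possibility that $(u^\delta)$ is unbounded in $E$, or bounded but only weakly convergent there. What one does get uniformly is a bound in the weaker norm: Proposition \ref{gausslemma}(i) controls $\mathcal{J}^\delta_0(u)/\mathcal{J}^\delta_0(0)$ by $c\,e^{-\frac{a_1}{2}(\|u\|_X-\delta)^2}$ for \emph{all} $u\in X$, and combined with $M\le\Phi\le K(1)$ on bounded sets this yields only $\sup_\delta\|u^\delta\|_X<\infty$. The remaining steps --- $\bar a\in S$ (else $\nu_0(B^\delta(a^\delta))=0$ eventually, contradicting the definition of $a^\delta$), $\bar u\in E$, and strong $X$-convergence --- are then obtained from the qualitative small-ball statements, Proposition \ref{gausslemma}(ii) and (iii), applied to $u^\delta$ and to $u^\delta-\bar u$, not from an a priori $E$-bound plus compact embedding.

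The same uniformity problem undermines your part (ii): the inequality $\limsup_\delta \mathcal{J}^\delta(u^\delta,a^\delta)/\bigl[\mu_0(B^\delta(0))\nu_0(B^\delta(a_\star))\bigr]\le \exp(-I(\bar u,\bar a))$ again requires the Cameron--Martin upper bound uniformly along the moving sequence. The paper instead compares $\mathcal{J}^\delta(u^\delta,a^\delta)$ directly with $\mathcal{J}^\delta(\bar u,\bar a)$ and shows the ratio tends to $1$, splitting into two cases: if $(u^\delta)$ fails to converge strongly in $E$ then $\|\bar u\|_E<\liminf\|u^\delta\|_E$ and the centredness of $\mu_0$ forces $\limsup_{\delta\downarrow 0}\,\mu_0(B^\delta(u^\delta))/\mu_0(B^\delta(\bar u))\le 1$; if it does converge strongly in $E$ then $\|u^\delta\|_E\to\|\bar u\|_E$ and the two-sided bound of Lemma \ref{ombasic}, now with centres converging in $E$, closes the argument, with the $\nu_0$-factor handled by continuity of $\rho$. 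You did correctly flag the uniformity of the $o_\delta(1)$ terms as the delicate point, but the obstruction lives in the Gaussian small-ball asymptotics over moving centres rather than in the $\Phi$-oscillation, and it is not a technicality that can be repaired within your scheme --- it is the reason the argument is organised around Proposition \ref{gausslemma} in the first place.
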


A consequence of Theorem $\ref{thm:lim_is_map}$ is that, under its assumptions, MAP estimators and minimisers of the Onsager-Machlup functional are equivalent. The proof of this corollary is identical to that of Corollary 3.10 in \cite{map}:

\begin{corollary}
Under the conditions of Theorem \ref{thm:lim_is_map} we have the following.
\begin{enumerate}[(i)]
\item Any MAP estimator minimises the Onsager-Machlup functional $I$
\item Any $(u^*,a^*)\in E\times S$ which minimises the Onsager-Machlup functional $I$ is a MAP estimator for the measure $\mu$ given by (\ref{posterior}).
\end{enumerate}
\end{corollary}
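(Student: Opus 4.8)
\emph{Proof proposal.} The plan is to combine the three results just stated: Theorem~\ref{omthm} (that $I$ is the Onsager--Machlup functional for $\mu$), Theorem~\ref{exist} (existence of a minimiser of $I$ over $E\times S$, with $\inf I<\infty$ since $\Phi$ is real-valued and $K$ is finite on $S$), and Theorem~\ref{thm:lim_is_map} (that the centres $(u^\delta,a^\delta)$ of the balls of maximal probability admit a subsequence converging strongly to some $(\bar u,\bar a)\in E\times S$ which is simultaneously a MAP estimator and a minimiser of $I$). Throughout one uses freely that $\mathcal{J}^\delta(u^\delta,a^\delta)=\max_{(u,a)}\mathcal{J}^\delta(u,a)\ge \mathcal{J}^\delta(z)$ for every $z\in X\times\Lambda$. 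The argument is the obvious analogue of that of Corollary~3.10 in \cite{map}.

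For part (i), let $(\hat u,\hat a)$ be any MAP estimator. I would first show $(\hat u,\hat a)\in E\times S$. If $\hat a\notin S$ this is immediate, since $B^\delta(\hat u,\hat a)$ projects into an arbitrarily small $\Lambda$-ball about $\hat a$, which has $\nu_0$-measure zero for $\delta$ small, so $\mathcal{J}^\delta(\hat u,\hat a)=0$; if $\hat u\notin E$ (and $\hat a\in S$) one uses the classical Gaussian small-ball asymptotics for $\mu_0$ together with the product structure of $\mu_0\times\nu_0$ and the boundedness of $\exp(-\Phi)$ on bounded sets to conclude $\mathcal{J}^\delta(\hat u,\hat a)/\mathcal{J}^\delta(z_0)\to 0$ for any fixed $z_0\in E\times\intr(S)$. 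Either way $\mathcal{J}^\delta(\hat u,\hat a)/\mathcal{J}^\delta(u^\delta,a^\delta)\to 0$, contradicting the defining property of a MAP estimator, so $(\hat u,\hat a)\in E\times S$. Fixing a minimiser $(\bar u,\bar a)\in E\times S$ of $I$ from Theorem~\ref{exist}, the bound $\mathcal{J}^\delta(u^\delta,a^\delta)\ge\mathcal{J}^\delta(\bar u,\bar a)$ and Theorem~\ref{omthm} give
\begin{eqnarray*}
1=\lim_{\delta\downarrow 0}\frac{\mathcal{J}^\delta(\hat u,\hat a)}{\mathcal{J}^\delta(u^\delta,a^\delta)}\le \lim_{\delta\downarrow 0}\frac{\mathcal{J}^\delta(\hat u,\hat a)}{\mathcal{J}^\delta(\bar u,\bar a)}=\exp\big(I(\bar u,\bar a)-I(\hat u,\hat a)\big),
\end{eqnarray*}
whence $I(\hat u,\hat a)\le I(\bar u,\bar a)=\inf\{I(u,a)\mid u\in E,\ a\in S\}$, i.e. $(\hat u,\hat a)$ minimises $I$. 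When $\hat a$ or $\bar a$ lies in $\partial S$ one passes to the limit from $\intr(S)$ using part (ii) of Definition~\ref{def:om2} and the continuity of $\Phi$ and $K$ in the geometric variable.

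For part (ii), let $(u^*,a^*)\in E\times S$ minimise $I$, and let $(\bar u,\bar a)\in E\times S$ be the MAP estimator supplied by Theorem~\ref{thm:lim_is_map}, which also minimises $I$, so that $I(u^*,a^*)=I(\bar u,\bar a)$. Then
\begin{eqnarray*}
\frac{\mathcal{J}^\delta(u^*,a^*)}{\mathcal{J}^\delta(u^\delta,a^\delta)}=\frac{\mathcal{J}^\delta(u^*,a^*)}{\mathcal{J}^\delta(\bar u,\bar a)}\cdot\frac{\mathcal{J}^\delta(\bar u,\bar a)}{\mathcal{J}^\delta(u^\delta,a^\delta)},
\end{eqnarray*}
where, as $\delta\downarrow 0$, the first factor tends to $\exp(I(\bar u,\bar a)-I(u^*,a^*))=1$ by Theorem~\ref{omthm} and the second tends to $1$ because $(\bar u,\bar a)$ is a MAP estimator; hence $(u^*,a^*)$ is a MAP estimator, again with the boundary case $a^*\in\partial S$ (or $\bar a\in\partial S$) handled through part (ii) of Definition~\ref{def:om2}.

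I expect the main obstacle to be the first step of part (i): ruling out MAP estimators outside $E\times S$, which rests on the small-ball decay estimate for centres outside the Cameron--Martin space, and on the compatibility of such estimates with the product prior $\mu_0\times\nu_0$ and the finite-dimensionality of $\Lambda$. Everything else is short bookkeeping with ratios of small-ball probabilities, exactly as in \cite{map}.
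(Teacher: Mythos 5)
Your argument is correct and is essentially the paper's own proof: the paper simply defers to Corollary 3.10 of \cite{map}, whose argument you have reconstructed faithfully --- first ruling out MAP estimators outside $E\times S$ via the small-ball decay estimates (Proposition \ref{gausslemma}) and the vanishing of $\nu_0$ off $S$, then comparing ratios of $\mathcal{J}^\delta$ through the point $(\bar u,\bar a)$ supplied by Theorem \ref{thm:lim_is_map}, which is simultaneously a MAP estimator and a minimiser of $I$. The one delicate point, the boundary case $a\in\partial S$ where Theorem \ref{omthm} does not directly give the small-ball ratio, is also left implicit by the paper, and you correctly flag it and propose handling it via Definition \ref{def:om2}(ii).
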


\subsection{The Fomin Derivative Approach}
\label{ssec:fomin}
In recent work of Helin and Burger \cite{wmap}, the concept of MAP estimators was generalised to weak MAP (wMAP) estimators using the notion of Fomin differentiability of measures. The definition of wMAP estimators is such that if $\hat{u}$ is a MAP estimator then it is a wMAP estimator, but not necessarily vice versa. Under certain assumptions, they show that wMAP estimators are equivalent to minimisers of a particular functional. The assumptions do not hold in our case, since our forward map is non-linear and our prior $\mu_0\times\nu_0$ isn't necessarily convex, however the functional agrees with our objective functional $I$. Thus in what follows we provide a link between the Fomin derivative of the posterior $\mu$ and our objective functional $I$. 

The Fomin derivative of a measure on a Banach space $X$ equipped with its Borel $\sigma$-algebra $\mathcal{B}(X)$ is defined as follows.
\begin{definition}
A measure $\lambda$ on $X$ is called Fomin differentiable along the vector $z \in X$ if, for every set $A \in \mathcal{B}(X)$, there exists a finite limit
\begin{eqnarray*}
d_z\lambda(A) = \lim_{t\rightarrow 0}\frac{\lambda(A+tz) - \lambda(A)}{t}.
\end{eqnarray*}
The Radon-Nikodym density of $d_z\lambda$ with respect to $\lambda$ is denoted $\beta^\lambda_z$, and is called the logarithmic derivative of $\lambda$ along $z$.
\end{definition}
\begin{example}
\label{ex:fomin}
\begin{enumerate}[(i)]
\item Let $\nu_0$ be a measure on $\mathbb{R}^k$ with Lebesgue density $\rho$, supported and continuously differentiable on $S\subseteq\mathbb{R}^k$. Then for any $a \in\intr(S)$ and $b \in \mathbb{R}^k$ we have
\begin{eqnarray*}
\beta^{\nu_0}_b(a) = \frac{\nabla\rho(a)}{\rho(a)}\cdot b = \partial_{b}\log\rho(a).
\end{eqnarray*}
\item Let $\mu_0$ be a Gaussian measure on a Banach space $X$ with Cameron-Martin space $(E,\langle\cdot,\cdot\rangle_E)$. Then for any $u \in X$ and $h \in E$ we have
\begin{eqnarray*}
\beta^{\mu_0}_h(u) = -\langle u,h\rangle_E.
\end{eqnarray*}
This follows from the Cameron-Martin and dominated convergence theorems.
\item Again using the Cameron-Martin and dominated convergence theorems, we see that with $\nu_0$ and $\mu_0$ as above, for any $(u,a) \in X\times\intr(S)$ and $(h,b) \in E\times \mathbb{R}^k$,
\begin{eqnarray*}
\beta^{\mu_0\times\nu_0}_{(h,b)} (u,a) = \beta^{\mu_0}_h + \beta^{\nu_0}_b.
\end{eqnarray*}

We can use the above example to characterise the Fomin derivative of our posterior distribution $\mu$, given by (\ref{posterior}).
\begin{theorem}
Assume that $\Phi:X\times\Lambda\rightarrow\mathbb{R}$ is bounded measurable with uniformly bounded derivative, and assume that $\rho$ is continuously differentiable on $S$. Then for each $(u,a) \in X\times\intr(S)$ and $(h,b) \in E\times\mathbb{R}^k$, we have
\begin{eqnarray*}
\beta^{\mu}_{(h,b)}(u,a) &= -\partial_{(h,b)}\Phi(u,a) - \langle u,h\rangle_E + \partial_b\log\rho(a)\\
&= -\partial_{(h,b)} I(u,a)
\end{eqnarray*}
Therefore, $(\hat{u},\hat{a})$ is a critical point of $I$ if and only if $\beta^\mu_{(h,b)}(\hat{u},\hat{a}) = 0$ for all $(h,b) \in E\times\mathbb{R}^k$. 
\end{theorem}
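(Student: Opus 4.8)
The plan is to obtain the formula from the chain (product) rule for logarithmic Fomin derivatives together with Example~\ref{ex:fomin}, and then to read off the characterisation of critical points.

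First I would record that $\mu = f\cdot(\mu_0\times\nu_0)$ with $f(u,a) = Z(y)^{-1}\exp(-\Phi(u,a))$. Since $\Phi$ is bounded and has uniformly bounded G\^ateaux derivative along directions in $E\times\mathbb{R}^k$, the density $f$ is bounded above and below by positive constants, is differentiable along every $(h,b)\in E\times\mathbb{R}^k$ with
\[
\partial_{(h,b)} f(u,a) = -\bigl(\partial_{(h,b)}\Phi(u,a)\bigr)f(u,a),
\]
and this derivative is uniformly bounded. In particular $\partial_{(h,b)}\log f = -\partial_{(h,b)}\Phi$, and the normalising constant $Z(y)$ drops out.

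The key step is a product rule: if $\lambda$ is a finite measure Fomin differentiable along $z$, and $g>0$ is a bounded, bounded-below density that is differentiable along $z$ with bounded $\partial_z g$, then $g\cdot\lambda$ is Fomin differentiable along $z$ with $\beta^{g\lambda}_z = \beta^\lambda_z + \partial_z\log g$. I would prove this via the integration-by-parts characterisation of Fomin differentiability: for a bounded test function $\varphi$ that is $C^1$ along $z$ (so that $\varphi g$ is admissible),
\[
\int \partial_z\varphi\,\dee(g\lambda) = \int \partial_z(\varphi g)\,\dee\lambda - \int \varphi\,\partial_z g\,\dee\lambda = -\int \varphi g\,\beta^\lambda_z\,\dee\lambda - \int \varphi\,\partial_z g\,\dee\lambda = -\int \varphi\bigl(\beta^\lambda_z + \partial_z\log g\bigr)\,\dee(g\lambda);
\]
since $\beta^\lambda_z\in L^1(\lambda)$ and $\partial_z\log g$ is bounded, the function $\beta^\lambda_z + \partial_z\log g$ lies in $L^1(g\lambda)$ and is therefore the logarithmic derivative of $g\lambda$. (Alternatively one may estimate the difference quotient $t^{-1}[(g\lambda)(A+tz)-(g\lambda)(A)]$ directly, using the translation formula for $\lambda$, the mean value theorem for $g$, and dominated convergence with the uniform bounds on $g$ and $\partial_z g$.) Applying this with $\lambda = \mu_0\times\nu_0$, $g = f$, $z=(h,b)$, and invoking Example~\ref{ex:fomin}(iii), which gives $\beta^{\mu_0\times\nu_0}_{(h,b)}(u,a) = \beta^{\mu_0}_h(u) + \beta^{\nu_0}_b(a) = -\langle u,h\rangle_E + \partial_b\log\rho(a)$ for $(u,a)\in X\times\intr(S)$, yields
\[
\beta^\mu_{(h,b)}(u,a) = \beta^{\mu_0\times\nu_0}_{(h,b)}(u,a) + \partial_{(h,b)}\log f(u,a) = -\partial_{(h,b)}\Phi(u,a) - \langle u,h\rangle_E + \partial_b\log\rho(a).
\]

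To identify this with $-\partial_{(h,b)}I$, note that since $\mu_0$ is centred we have $I(u,a) = \Phi(u,a) + \tfrac12\|u\|_E^2 - \log\rho(a)$ on $E\times\intr(S)$; each summand is G\^ateaux differentiable along $(h,b)$, with $\partial_h\bigl(\tfrac12\|u\|_E^2\bigr) = \langle u,h\rangle_E$ and $\partial_b\bigl(-\log\rho(a)\bigr) = -\partial_b\log\rho(a)$, so $\partial_{(h,b)}I(u,a) = \partial_{(h,b)}\Phi(u,a) + \langle u,h\rangle_E - \partial_b\log\rho(a) = -\beta^\mu_{(h,b)}(u,a)$. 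Finally, $(\hat u,\hat a)\in E\times\intr(S)$ is a critical point of $I$ exactly when the linear map $(h,b)\mapsto\partial_{(h,b)}I(\hat u,\hat a)$ vanishes on all of $E\times\mathbb{R}^k$, which by the displayed identity is equivalent to $\beta^\mu_{(h,b)}(\hat u,\hat a)=0$ for every $(h,b)\in E\times\mathbb{R}^k$. I expect the main obstacle to be the rigorous justification of the product rule, that is, differentiating $t\mapsto(g\lambda)(A+tz)$ and identifying its logarithmic derivative in $L^1(g\lambda)$: the boundedness of $\Phi$ and of its derivative are precisely what supply a uniform dominating function for the difference quotients and place the candidate derivative in the right $L^1$ space, after which the argument is routine Fomin-derivative calculus. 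A secondary point is that the identity for $\beta^\mu_{(h,b)}$ holds a priori only $\mu$-a.e., but restricting to $(u,a)\in E\times\intr(S)$, where $\langle\cdot,h\rangle_E$ and $\partial_b\log\rho$ are genuinely pointwise-defined, makes the evaluation at the candidate critical point legitimate.
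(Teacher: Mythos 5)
Your argument follows essentially the same route as the paper: both decompose $\mu = f\cdot(\mu_0\times\nu_0)$ with $f = \exp(-\Phi)/Z$, apply the product rule for Fomin derivatives (which the paper simply cites as result (2.1.13) of Bogachev's book on differentiable measures, and which you prove via integration by parts), and then invoke Example~\ref{ex:fomin}(iii) for the logarithmic derivative of the prior. Your additional verification of the identification with $-\partial_{(h,b)}I$ and of the critical-point equivalence is correct and fills in details the paper leaves implicit.
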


\begin{proof}
We use result $(2.1.13)$ from \cite{bogachev-fomin}, which tells us that if $\lambda$ is a measure differentiable along $z$ and $f$ is a bounded measurable function with uniformly bounded partial derivative $\partial_zf$, then the measure $f\cdot\lambda$ is differentiable along $h$ as well and
\begin{eqnarray*}
d_z(f\cdot\lambda) = \partial_z f\cdot \lambda + f\cdot d_z\lambda.
\end{eqnarray*}
We apply this result with $\lambda = \mu_0\times\nu_0$, $f = \exp(-\Phi)/Z$ and $z = (u,a)$. Note that $f$ satisfies the assumptions of (2.1.13) due to the assumptions on $\Phi$. The result then follows using Example \ref{ex:fomin} (iii) above.
\end{proof}

\end{enumerate}
\end{example}

\section{Numerical Experiments}
\label{sec:num}
In this section we perform some numerical experiments related to the theory above for a variety of geometric models, in the case of the groundwater flow forward map \mmd{introduced in subsection \ref{ssec:gwf}}. We both compute minimisers of the relevant Onsager-Machlup functional (i.e. MAP estimators), and we sample the posterior distribution using a state-of-the-art function space Metropolis-Hastings MCMC method. We then relate the samples to the MAP estimators. From these numerical experiments we observe the following behaviour of the posterior distribution.
\begin{enumerate}
\item The posterior distribution can be highly multi-modal, especially when the \mmd{parameterised} geometry is non-trivial. This is evident from the sensitivity of the minimisation of the objective functional on its initial state, and the behaviour of MCMC chains initiallised at these calculated minimisers.
\item When the geometry is incorrect the fields attempt to compensate, which presumably contributes to the existence of multiple local minimisers of the objective functional; this occurs in both the MAP estimation and the MCMC samples. A consequence is that many of the local minimisers lack the desired sharp interfaces.  These minimisers could however be used to suggest more appropriate geometric parameters for the initialisation. 
\item The mixing rates of MCMC chains have a strong dependence upon which local minimiser they are initialised at: acceptance rates can vary wildly when the initial state is changed but all other parameters are kept fixed. \mmd{This provides some insight into the shape of the posterior distribution.}
\item Though often there are many local minimisers, they can be separated into classes of minimisers sharing similar characteristics, such as close geometry. MCMC chains typically tend to stay within these classes, which can be observed by monitoring the closest local minimiser to an MCMC chain's state at each step. \mmd{This suggests that the posterior can possess several clusters of nearby modes.}
\end{enumerate}
\mmd{One conclusion we can draw from the above points is that there are often many different geometries that are consistent with the data. This is not necessarily an effect of noise on the measurements, and the effect may persist as the noise level goes to zero, since it is unknown if these geometric parameters are uniquely identifiable in general.}

\subsection{Test Models}
We consider three different geometric models: a two parameter, two layer model; a five parameter, three layer model with fault; and a five parameter channelised model.

In what follows, as in Example \ref{ex:laplacian}, we define the negative Laplacian with Neumann boundary conditions:
\begin{eqnarray*}
A = -\Delta,\;\;\;\mathcal{D}(A) &= \left\{u \in H^2(D)\;\bigg|\;\frac{\dee u}{\dee \nu} = 0\text{ on }\partial D, \int_Du(x)\,\dee x = 0\right\}.
\end{eqnarray*}
Recall that if $u \sim N(0,A^{-\alpha})$ with $\alpha > d/2$, then $u$ is almost surely continuous \cite{lecturenotes}.

\subsubsection{Model 1 (Two layer)}
This model is described in Example \ref{ex:straight}. The geometric parameters $a = (a^1,a^2)$ are defined as in Figure \ref{fig:model1}. For simulations, we use the choice of prior
\begin{eqnarray*}
\mu_0 &= N(1,A^{-1.4})\times N(-1,A^{-1.8}),\\
\nu_0 &= U([0,1])\times U([0,1]).
\end{eqnarray*}

\begin{figure}
\begin{center}
\frame{\includegraphics[width=0.4\textwidth]{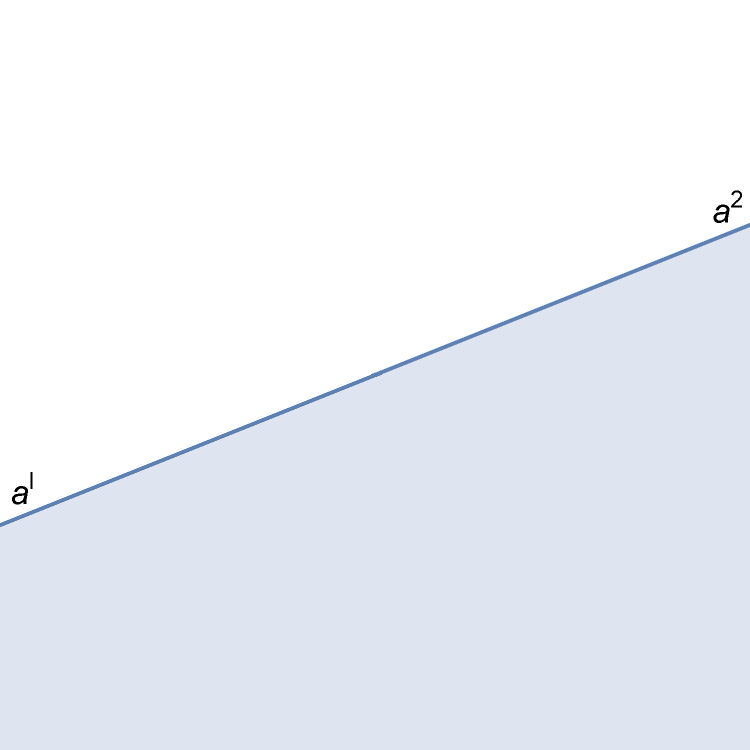}}
\caption{The definition of the geometric parameters $a = (a^1,a^2)$ in Model 1.}
\label{fig:model1}
\end{center}
\end{figure}

\subsubsection{Model 2 (Three layer with fault)}
This model is described in \cite{kui}, where it is labelled \emph{Test Model 1}. The geometric parameters $a = (a^1,a^2,a^3,a^4,a^5)$ are defined as in Figure \ref{fig:model2}, with the fault occurring at $x = 0.55$. For simulations, we use the choice of prior
\begin{eqnarray*}
\mu_0 &= N(2,2A^{-1.4})\times N(0,A^{-1.8})\times N(-2,2A^{-1.4}),\\
\nu_0 &= U(S)\times U(S)\times U([-0.3,0.3]),
\end{eqnarray*}
where $S\subseteq [0,1]^2$ is the simplex $S = \{(x,y)\;|\; 0\leq x \leq 1, x \leq y \leq 1\}$.

\begin{figure}
\begin{center}
\frame{\includegraphics[width=0.4\textwidth]{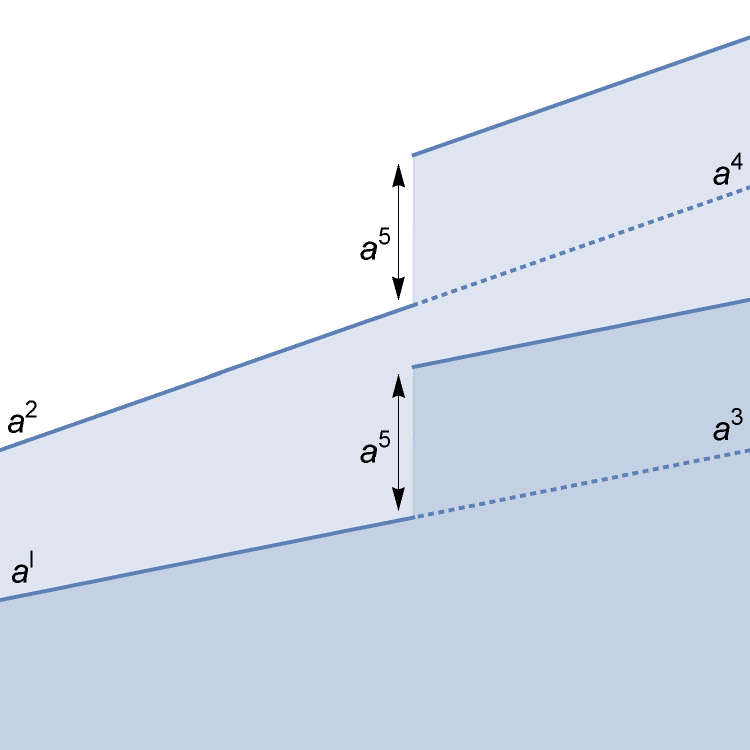}}
\caption{The definition of the geometric parameters $a = (a^1,a^2,a^3,a^4,a^5)$ in Model 2.}
\label{fig:model2}
\end{center}
\end{figure}

\subsubsection{Model 3 (Channel)}
This model is described in \cite{kui}, where it is labelled \emph{Test Model 2}. The geometric parameters $a = (a^1,a^2,a^3,a^4,a^5)$ are defined as in Figure \ref{fig:model3}. Here $a^1,a^2,a^3,a^4,a^5$ represent the channel amplitude, frequency, angle, initial point and width respectively. For simulations, we use the choice of prior
\begin{eqnarray*}
\mu_0 &= N(1,A^{-1.4})\times N(-1,A^{-1.8}),\\
\nu_0 &= U([0,1])\times U([\pi,4\pi])\times U([-\pi/4,\pi/4])\times U([0,1])\times U([0,0.4]).
\end{eqnarray*}

\begin{figure}
\begin{center}
\frame{\includegraphics[width=0.4\textwidth]{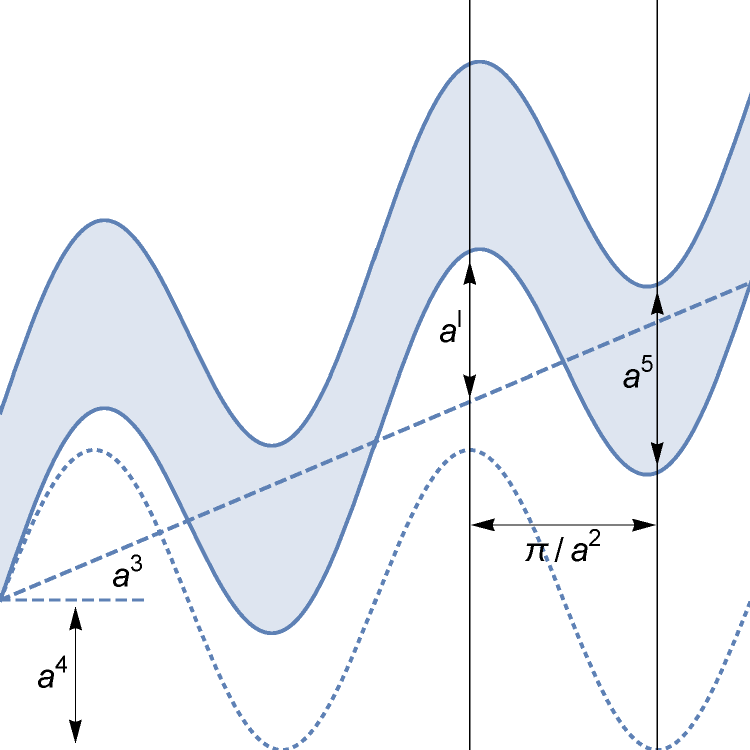}}
\caption{The definition of the geometric parameters $a = (a^1,a^2,a^3,a^4,a^5)$ in Model 3.}
\label{fig:model3}
\end{center}
\end{figure}

For each model, we fix a true permeability $(u^\dagger,a^\dagger)$ as a draw from the corresponding prior distribution, generated on a mesh of $256^2$ points. For the forward model, we take the coefficient map $\sigma(\cdot) = \exp(\cdot)$. We observe the pressure on a grid $(x_i)_{i=1}^{25}$ of 25 uniformly spaced points, via the maps (\ref{eq:obs_op}) with $\eps = 0.05$. We add i.i.d. Gaussian noise $N(0,\gamma^2)$ to each observation, taking $\gamma = 0.01$. The resulting relative errors on the data can be seen in Table \ref{tab:errors}. Small relative errors of this size typically make the posterior distribution hard to sample as they lead to measure concentration phenomena; MAP estimation can thus be particularly important.

\begin{table}[h!]
\begin{center}
\begin{tabular}{ccc}
\br
\textbf{Model Number} & \textbf{Mean relative error (\%)} & \textbf{Range of relative errors (\%)}\\
\mr
$1$ & $0.5$ & $0.02-3.5$\\
$2$ & $0.9$ & $0.1-4.0$\\
$3$ & $0.3$ & $0.1-1.0$\\
\br
\end{tabular}
\end{center}
\caption{The relative error on the data, when each measurement is perturbed by an instance of $N(0,0.01^2)$ noise.}
\label{tab:errors}
\end{table}

\subsection{MAP Estimation}
\mmd{Based on the theory in section \ref{sec:map}, we can calculate MAP estimators by minimising the Onsager-Machlup functional for the posterior distribution.} We compute local minimisers of the Onsager-Machlup functional using the following iterative alternating method.
\begin{algorithm}
\begin{enumerate}[1.]
\item Choose an initial state $(u_0,a_0) \in X\times\Lambda$.
\item Update the geometric parameters simultaneously using the Nelder-Mead algorithm.
\item Update each field individually using a line-search in the direction provided by the Gauss-Newton algorithm.
\item Go to 2.
\end{enumerate}
\end{algorithm}
The Nelder-Mead and Gauss-Newton algorithms are discussed in \cite{nocedal}, in sections 9.5 and 10.3 respectively.
Since we do not update the fields and geometric parameters simultaneously, it is possible that this algorithm will get caught in a saddle point: consider for example the function $f:\mathbb{R}\times\mathbb{R}\rightarrow\mathbb{R}$, $f(x,y) = xy$, at the point $(0,0)$, being minimised alternately in the coordinate directions. Hence once the algorithm stalls, we propose a large number of random simultaneous updates in an attempt to find a lower functional value. If this is successful, we return to step 2 \mmd{of the algorithm}. We terminate the algorithm once the difference between successive values of $\Phi$ is below TOL = $10^{-5}$. Calculations are performed on a mesh of $64^2$ points in order to avoid an inverse crime.

\mmd{To ensure that we explore the support of the posterior distribution,} we choose a variety of initial states $(u_0,a_0) \mmd{\in X\times\Lambda}$ \mmd{for the minimisation} such that $I(u_0,a_0) < \infty$ \mmd{in the continuum setting}. To this end, we let $a_0$ be a draw from the prior distribution $\nu_0$, and take $u_0$ to lie in the Cameron-Martin space of $\mu_0$. Specifically, if a component of $u \in X$ has prior distribution $N(m,A^{-\alpha})$, we take the corresponding component of $u_0$ to be a draw from $N(m,A^{-\alpha-d/2})$. Output of the algorithm is shown in Figures \ref{maps1}-\ref{maps3}.

We first comment on the minimisers of the Onsager-Machlup functional for Model 1. Generally the geometric parameters are closely recovered \mmd{regardless of the initialisation state}, though there is more variation in the fields. In the simulations where the geometry is inaccurate, for example simulations 7, 17 and 46, the fields can be seen to be compensating \mmd{by forming a `soft' interface where the true interface is}. 

The minimisers associated with Model 2 admit much more variation, \mmd{though it is possible to partition them into smaller subsets of minimisers which share similar characteristics to one another}, as mentioned in point (iv) at the beginning of the section. \mmd{The clustering of the different minimisers is performed by eye, classifying them according to similar geometric parameters. Additionally we have an Other class, containing the minimisers which do not appear similar to one another nor appear to fit into any other class. We see later with MCMC simulations that these states do still act as local maximisers of the posterior probability.}

The minimisers of the Onsager-Machlup functional for Model 3 show even more variation than those for Model 2, with the geometry in half of the minimisers not even being close to the true geometry. In the cases where the geometry is drastically wrong the fields have again attempted to compensate. This behaviour is particularly evident in the Other class, and is echoed in the MCMC simulations later. \mmd{The Other class here is much larger than for Model 2, though as with Model 2 these states do appear to act as distinct local maximisers of the posterior probability}.

This multi-modality of the posterior distribution is not unexpected. The paper \cite{historymatching} considers the history matching problem in reservoir simulation, in which inference is done jointly on both geometric and permeability parameters in the IC fault model. Though the forward map and observation maps are different in our model, we observe the same clustering of nearby local MAP estimators, and increased multi-modality as the dimension of the parameter space increases. In \cite{historymatching} it is observed that the global minimum often does not correspond to the truth, especially in the presence of measurement noise, and so all local minimisers of the Onsager-Machlup functional should be sought before drawing conclusions about the permeabilities -- this appears to be the case in our model as well. We note that MCMC can be useful in identifying a range of such minimisers, in view of the links established in the next subsection between MCMC and MAP estimation.

\subsection{MCMC and Local Minimisers}
We now observe the behaviour of MCMC chains initialised at these local minimisers of the Onsager-Machlup functional. We use a Metropolis-within-Gibbs algorithm for the sampling, alternating between preconditioned Crank-Nicolson (pCN) updates for the fields, see \cite{pcn} for details, and Random Walk Metropolis updates for the geometric parameters. Again, simulations are performed on a mesh of $64^2$ points in order to avoid an inverse crime. $10^5$ samples are taken for each chain, with the initial $2\times 10^4$ discarded as burn-in. The conditional means calculated from the samples are shown in Figures \ref{cm1}-\ref{cm3}.

We monitor the value that $\Phi$ takes along the chain $(u^{(n)},a^{(n)})$, and compare it with the value $\Phi$ takes on the local minimisers $(u_{\mathrm{MAP}}^i,a_{\mathrm{MAP}}^i)$. This is shown in Figures \ref{phitrace1}-\ref{phitrace3}, with the horizontal lines being the different values of $\Phi(u_{\mathrm{MAP}}^i,a_{\mathrm{MAP}}^i)$. Note that it makes no sense to monitor the value that the objective functional $I$ takes along the chain as the fields almost surely do not lie in the corresponding Cameron-Martin spaces, and so $I$ is almost surely infinite along the chain \mmd{in the continuum setting}.

In addition, we monitor which minimiser the chain is nearest at each step, \mmd{in the permeability space}. Specifically, we look at
\begin{eqnarray}
\label{eq:mn}
m_n := \underset{i}{\mathrm{argmin}}\;\|F(u^{(n)},a^{(n)})-F(u^i_{\mathrm{MAP}},a_{\mathrm{MAP}}^i)\|_{L^2(D)}
\end{eqnarray}
where $F:X\times\Lambda\rightarrow L^\infty(D)$ is \mmd{the construction map (\ref{eq:construction})} from the state space to the permeability space. \mmd{We make the choice of the $L^2$ norm over the $L^\infty$ norm for the permeability space to avoid over-penalising incorrect geometry.} A selection of traces of $m_n$ are shown in Figures \ref{argmin1}-\ref{argmin3}. These illustrate that even though some of the local minimisers are very far from from the true log-permeability, they do indeed act as local maximisers of the posterior probability. \new{Moreover, they show the interaction between the different classes of minimisers in the cases of Models 2 and 3. Specifically, they show that the MCMC chains can easily move within these classes, but moving between classes is more difficult.}

\mmd{We now discuss the above monitored quantities, and their relation to MAP estimators, on a model-by-model basis.} Despite the slight variation in the fields of the minimisers from Model 1, the conditional means arising from the MCMC are nearly all identical. Simulation 23 stands out from the rest due to its slightly incorrect geometry -- this effect can be seen in the trace plot of $\Phi$, Figure \ref{phitrace1}, \mmd{where} the value of $\Phi$ remains larger than the simulations started elsewhere. \mmd{The traces of $\Phi$ for all other initialisations behave similarly to one another, taking similar misfit values after $2\times 10^4$ samples.} From Figure \ref{argmin1}, it can be seen that the MCMC chains \mmd{considered} all spend a lot of time close to MAP estimator 38, despite this not being the estimator with the lowest functional value.

For Model 2, typically the conditional means within the different classes are very similar to one another. Classes A and C resemble each other, and Class B has compensated for incorrect geometry with the centre field. Faults have developed in Class D, though there is still some compensation in the field. The centre field and a small fault has appeared in Class E, but again the fields are compensating. The geometric parameters for the permeabilities in the Other class remain relatively unchanged, but the fields have more freedom to attain a lower misfit than in the Onsager-Machlup functional minimisation due to the lack of regularisation term. Figure \ref{phitrace2} shows evidence for a number of local minima with a large data misfit value $\Phi$, with some chains appearing to remain stuck in their vicinity. The four chains visible in Figure \ref{phitrace2} (top) correspond to chains 49, 47, 45 and 43, from highest to lowest $\Phi$ value, all lying in the Other class -- \mmd{despite their significantly incorrect geometry, the corresponding MAP estimators appear to be genuine local maximisers of the posterior probability.}

In the channelised model, Model 3, there is yet more variation between local minimisers. Here the compensation effect by the fields is even more apparent \emph{in the conditional means}, especially in the Other class. From Figure \ref{phitrace3} it appears that the local minima are much sharper and more sparsely distributed than the previous two models. Again the chains with the largest $\Phi$ values were initialised at minimisers in the Other class, \mmd{suggesting the existence of many posterior modes with incorrect geometry.}

The mixing of the MCMC chains varies heavily based on the initialisation points of the chains: with the same jump parameters for the field and geometric parameter proposals, acceptance rates vary largely based on which minimiser the chain was started from. This indicates that some of the minima are much sharper than others. This is also evident from the traces of $m_n$ defined above, Figures \ref{argmin1}-\ref{argmin3}, especially in Model 3. Note also from these figures that the nearest local minimum typically lies in the same class as the initialisation state, though jumps between classes are possible. Though not shown, in Model 2, \mmd{whenever} the initial state lies in Class A, then the nearest minimiser always lies in Class A.


\begin{figure}[h!]
\begin{center}
\includegraphics[width=\textwidth,trim=2.7cm 3cm 3cm 3cm]{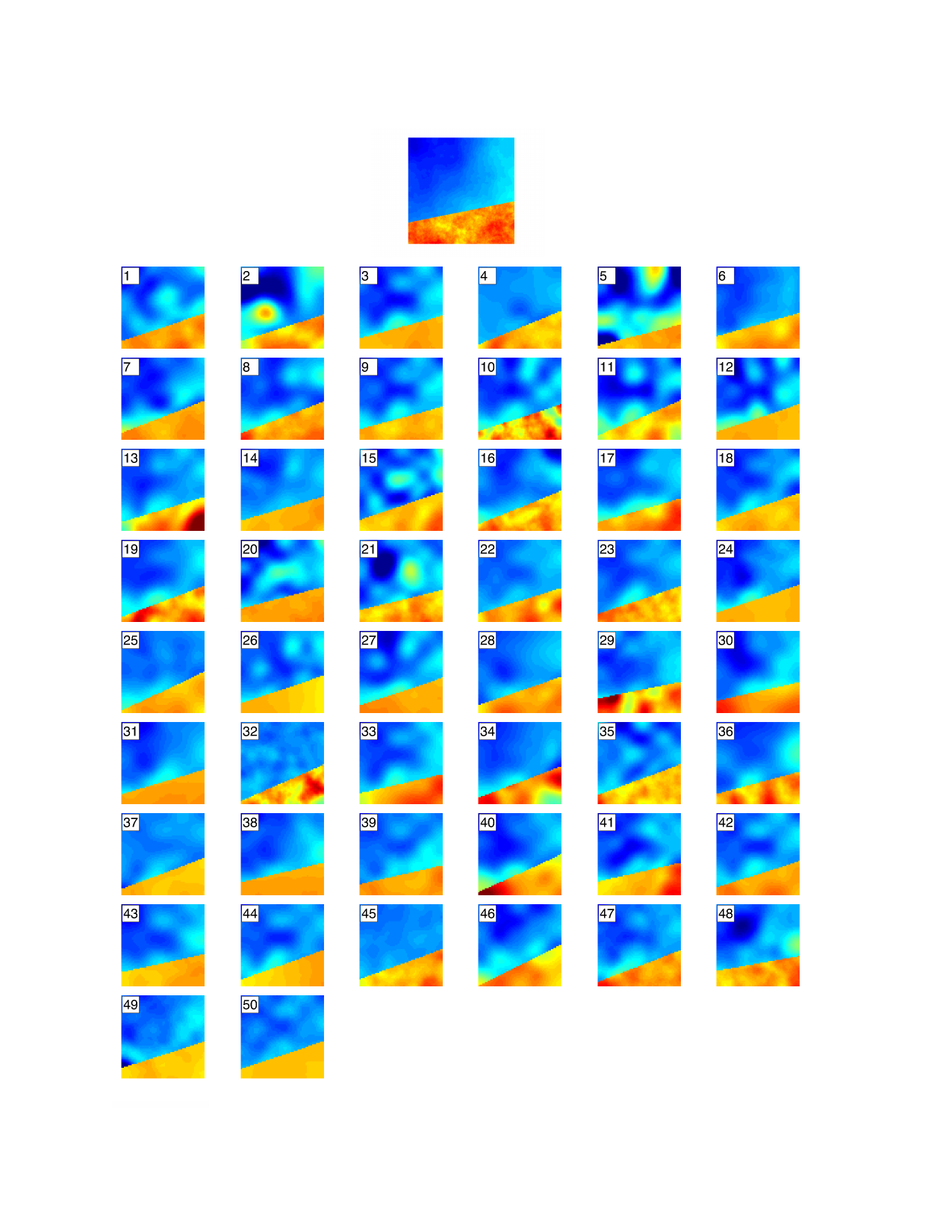}\\
\end{center}
\vspace{-0.5cm}
\caption{(Model 1) the true log-permeability field (top), and 50 local minimisers arising from minimisation initialised at draws from a smoothed prior distribution. Simulation 12 has the lowest functional value, with $I(u_{\mathrm{MAP}}^{12},a_{\mathrm{MAP}}^{12}) = 2847$.}
\label{maps1}
\end{figure}

\begin{figure}[h!]
\begin{center}
\includegraphics[width=\textwidth,trim=2.6cm 3cm 3cm 3cm]{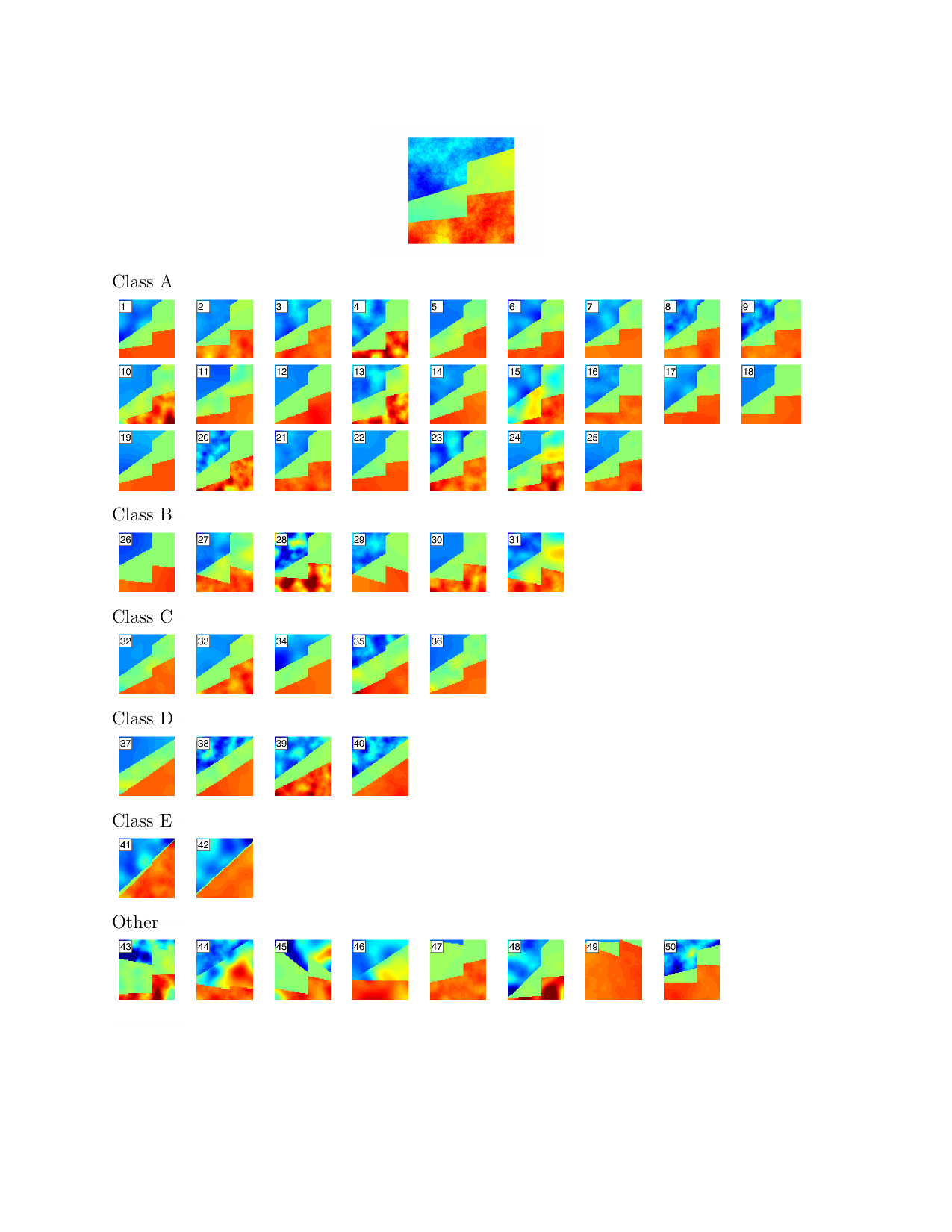}\\
\end{center}
\vspace{-2cm}
\caption{(Model 2) the true log-permeability field (top), and 50 local minimisers arising from minimisation initialised at draws from a smoothed prior distribution. Simulation 7 has the lowest functional value, with $I(u_{\mathrm{MAP}}^{7},a_{\mathrm{MAP}}^{7}) = 2567$. The minimisers have been divided into classes based on similar characteristics.}
\label{maps2}
\end{figure}

\begin{figure}[h!]
\begin{center}
\includegraphics[width=\textwidth,trim=2.6cm 3cm 3cm 3cm]{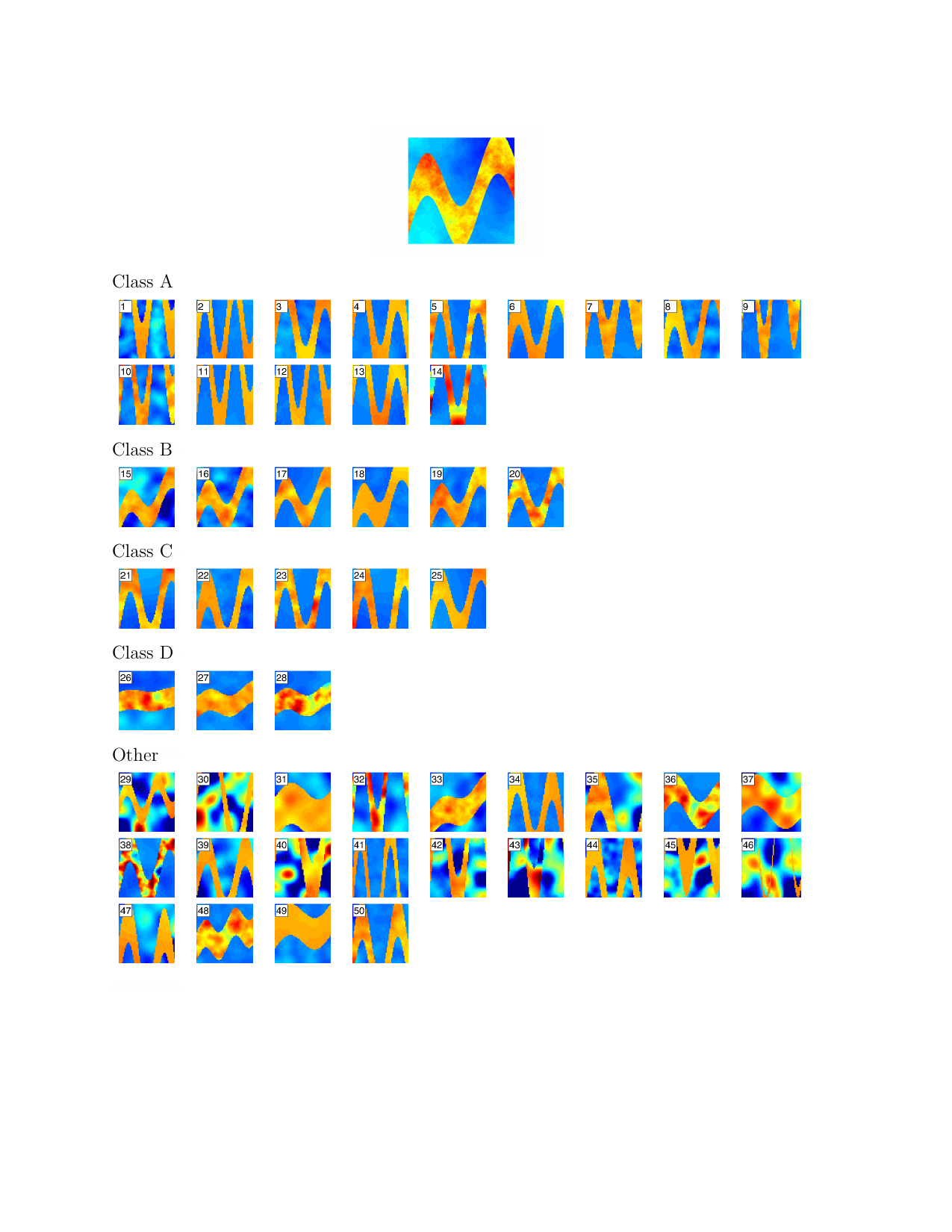}\\
\end{center}
\vspace{-2.7cm}
\caption{(Model 3) the true log-permeability field (top), and 50 local minimisers arising from minimisation initialised at draws from a smoothed prior distribution. Simulation 20 has the lowest functional value, with $I(u_{\mathrm{MAP}}^{20},a_{\mathrm{MAP}}^{20}) = 2117$. The minimisers have been divided into classes based on similar characteristics.}
\label{maps3}
\end{figure}


\begin{figure}[h!]
\begin{center}
\includegraphics[width=\textwidth,trim=2.7cm 3cm 3cm 3cm]{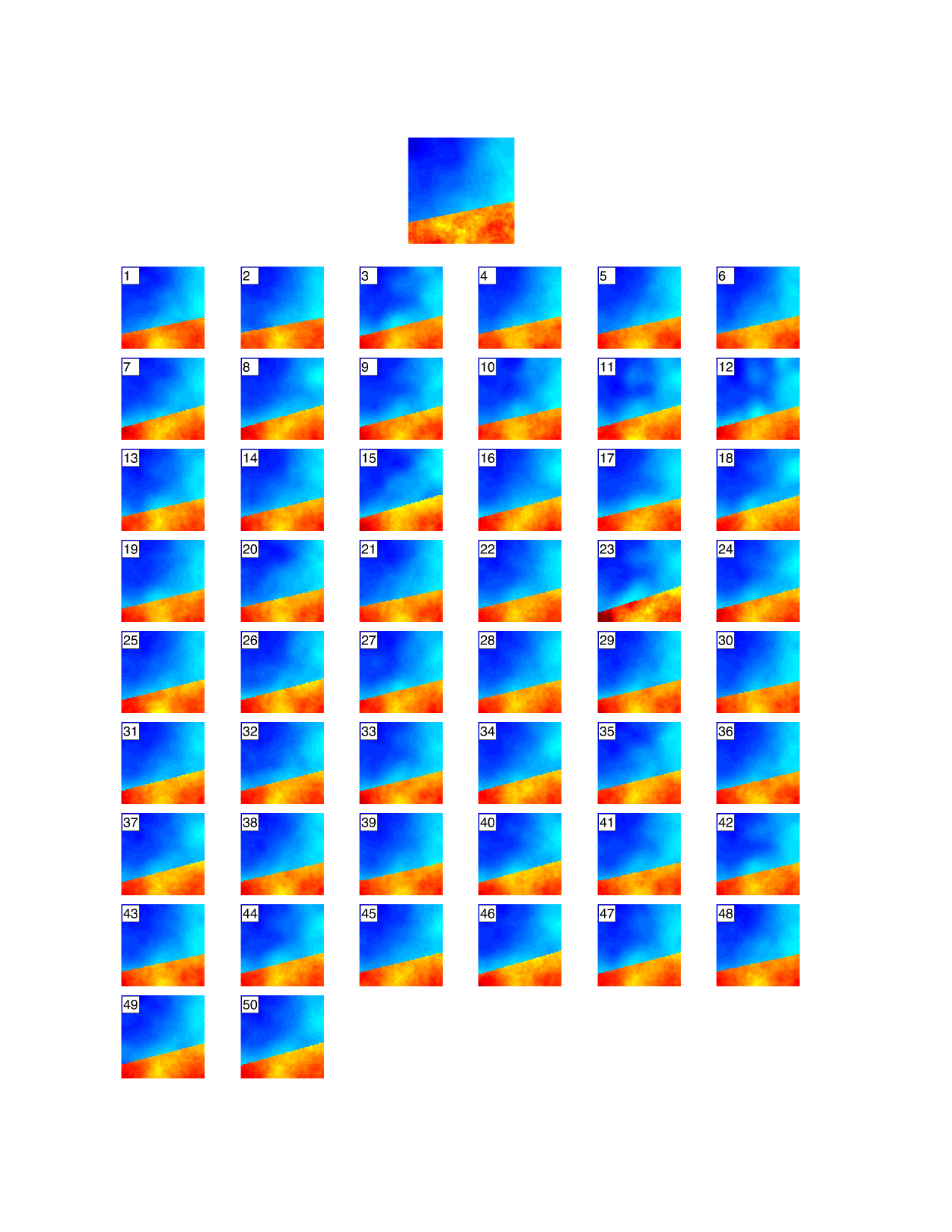}\\
\end{center}
\vspace{-0.5cm}
\caption{(Model 1) the true log-permeability field (top), and the conditional mean arising from MCMC chains initialised at the corresponding local minimisers above.}
\label{cm1}
\end{figure}

\begin{figure}[h!]
\begin{center}
\includegraphics[width=\textwidth,trim=2.6cm 3cm 3cm 3cm]{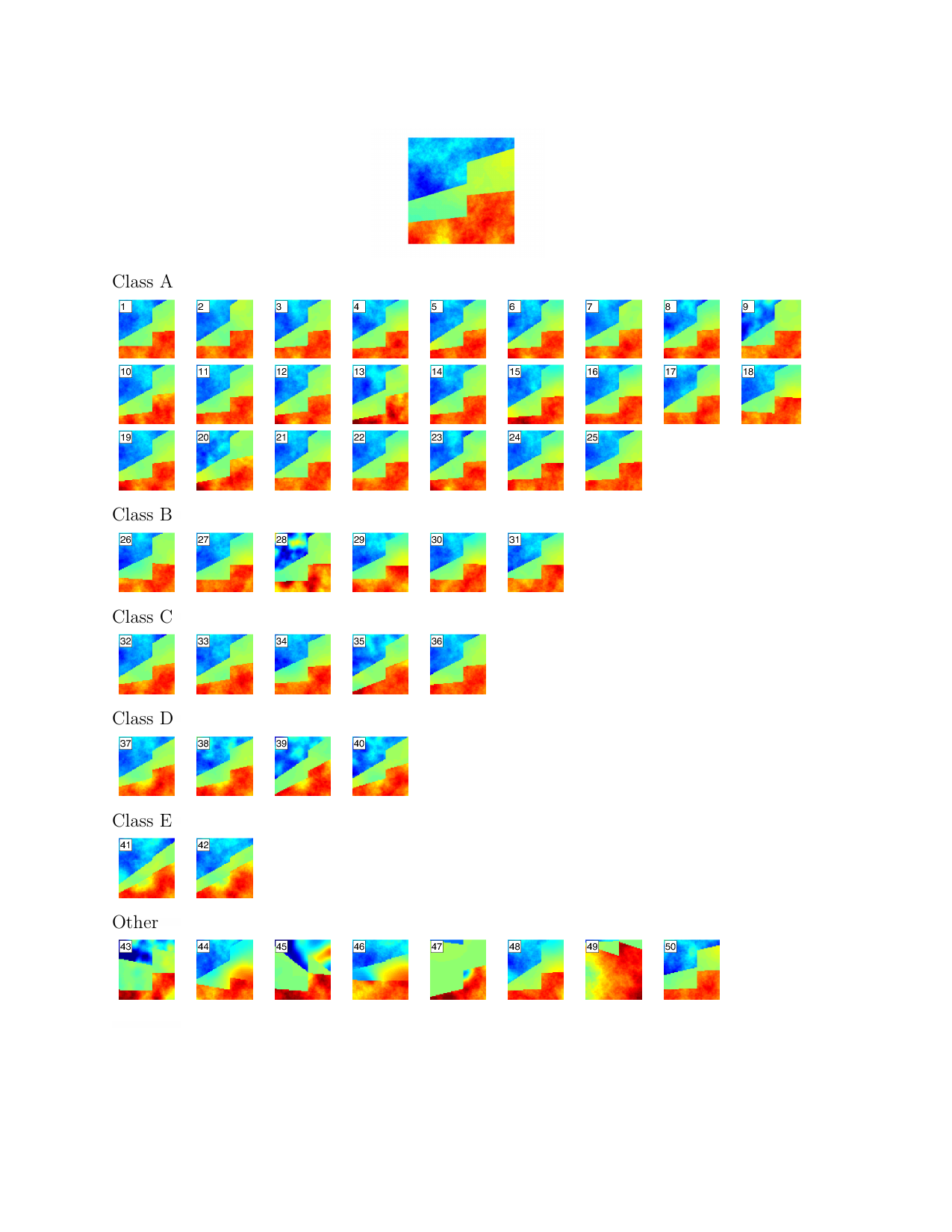}\\
\end{center}
\vspace{-2cm}
\caption{(Model 2) the true log-permeability field (top), and the conditional mean arising from MCMC chains initialised at the corresponding local minimisers above. We group them into the same classes as the local minimisers.}
\label{cm2}
\end{figure}

\begin{figure}[h!]
\begin{center}
\includegraphics[width=\textwidth,trim=2.6cm 3cm 3cm 3cm]{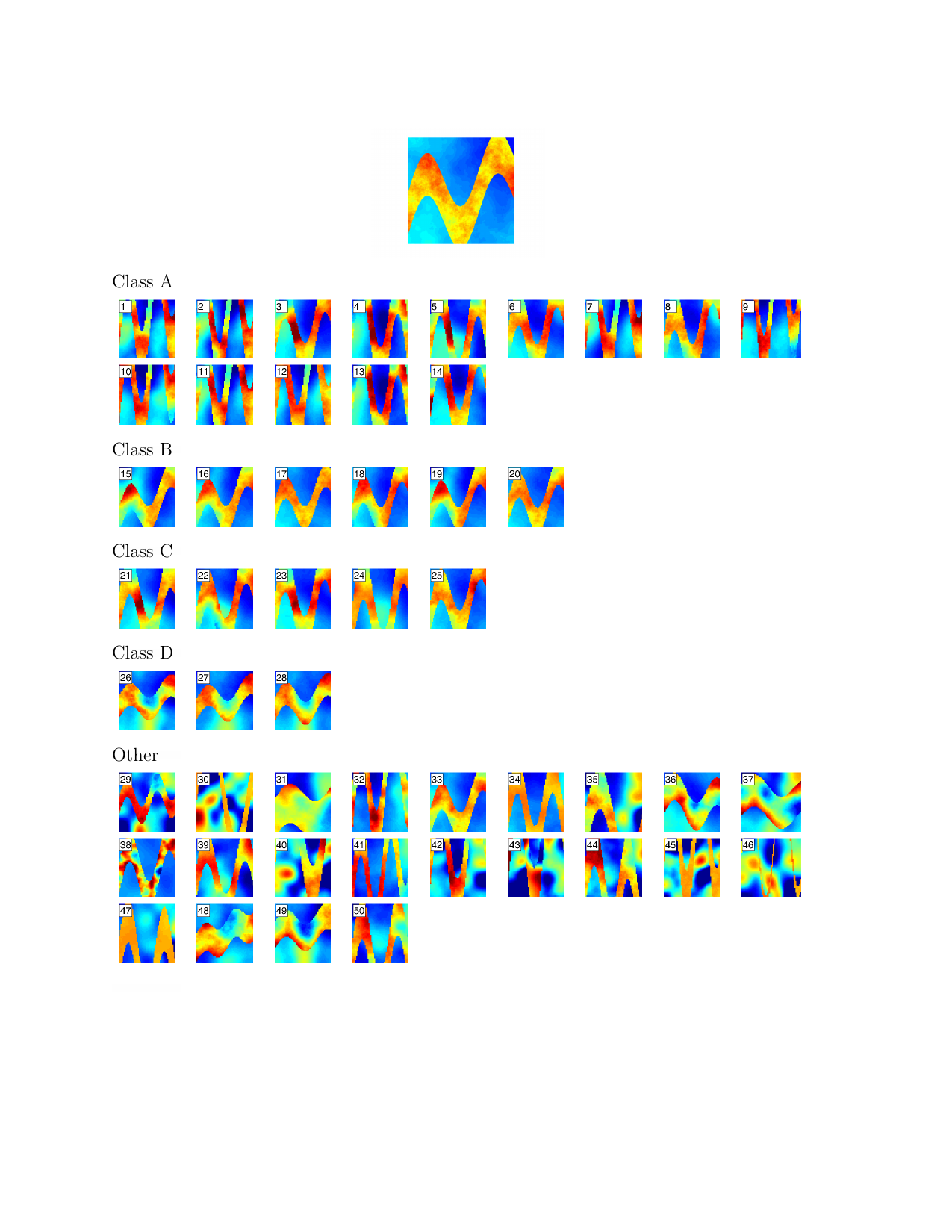}\\
\end{center}
\vspace{-2.7cm}
\caption{(Model 3) the true log-permeability field (top), and the conditional mean arising from MCMC chains initialised at the corresponding local minimisers above. We group them into the same classes as the local minimisers.}
\label{cm3}
\end{figure}

\begin{figure}[h!]
\begin{center}
\includegraphics[width=\textwidth,trim=2.6cm 3cm 3cm 3cm]{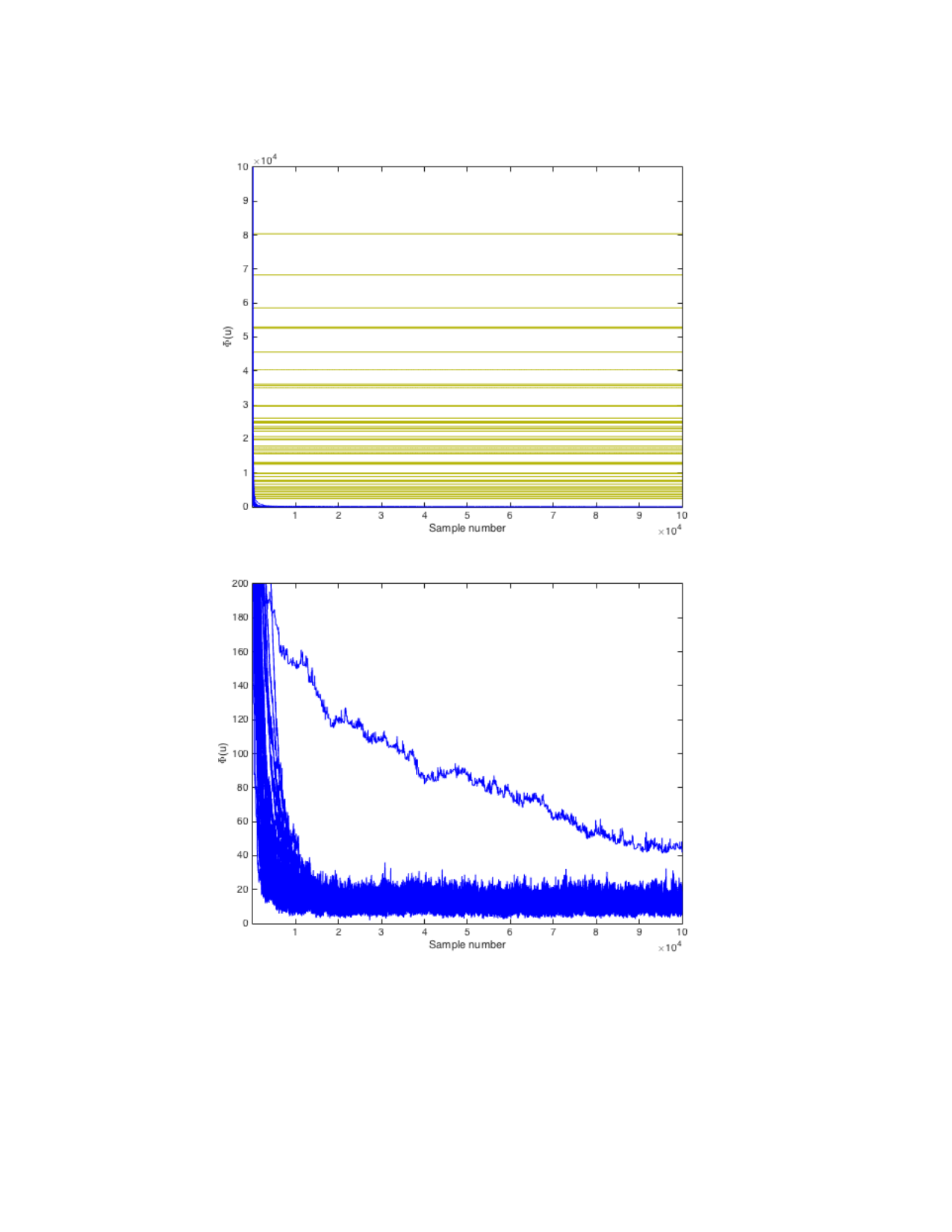}\\
\end{center}
\vspace{-3cm}
\caption{(Model 1) The evolution of $\Phi$ as the MCMC chains progress. The horizontal lines represent the value of each local minimiser under $\Phi$. Nearly all of the simulations find a small value of $\Phi$ almost immediately, but simulation \new{23} remains caught in the local minimiser for some time before it follows.}
\label{phitrace1}
\end{figure}

\begin{figure}[h!]
\begin{center}
\includegraphics[width=\textwidth,trim=2.6cm 3cm 3cm 3cm]{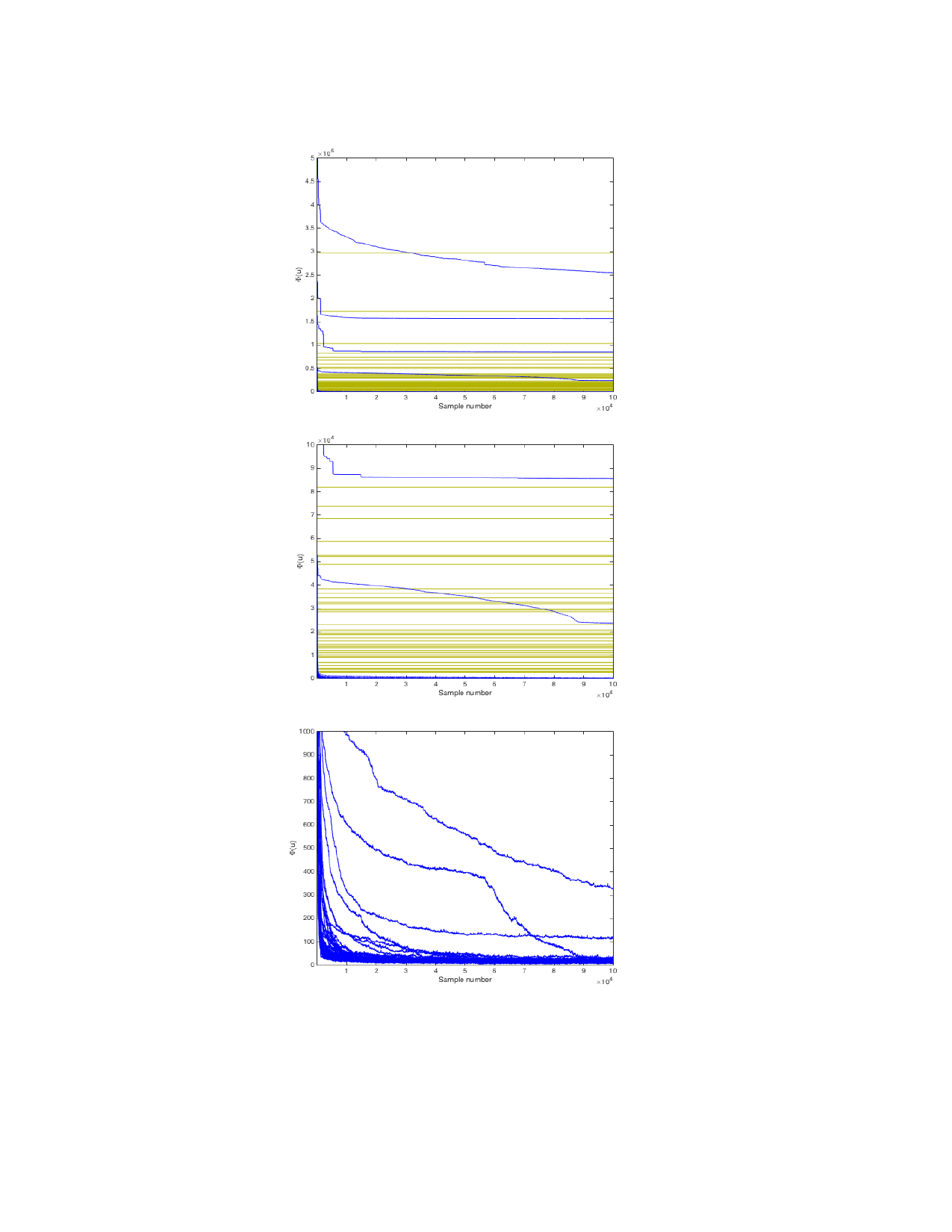}\\
\end{center}
\vspace{-2.5cm}
\caption{(Model 2) The evolution of $\Phi$ as the MCMC chains progress. The horizontal lines represent the value of each local minimiser under $\Phi$. The majority of the simultions find a small value of $\Phi$ almost immediately, but numerous fail to reach there, settling in local minima. The shape of these minima can be seen in Figure \ref{cm2}, and generally correspond to those in the same class as the initial state.}
\label{phitrace2}
\end{figure}

\begin{figure}[h!]
\begin{center}
\includegraphics[width=\textwidth,trim=2.6cm 3cm 3cm 3cm]{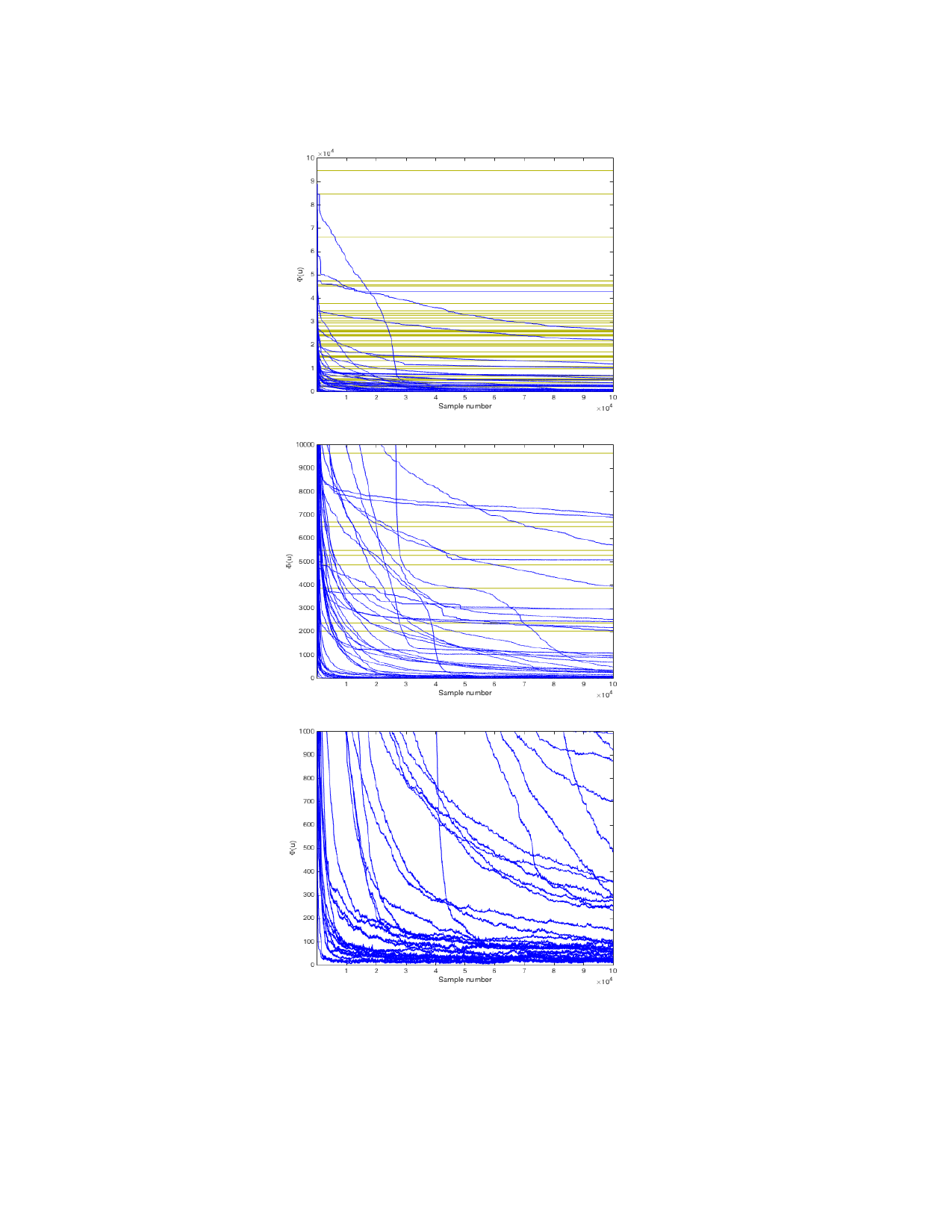}\\
\end{center}
\vspace{-2.5cm}
\caption{(Model 3) The evolution of $\Phi$ as the MCMC chains progress. The horizontal lines represent the value of each local minimiser under $\Phi$. The majority of the simultions find a small value of $\Phi$ almost immediately, but numerous fail to reach there, settling in local minima. The shape of these minima can be seen in Figure \ref{cm3}, and generally correspond to those in the same class as the initial state.}
\label{phitrace3}
\end{figure}


\begin{figure}[h!]
\begin{center}
\includegraphics[width=\textwidth,trim=2.6cm 3cm 3cm 3cm]{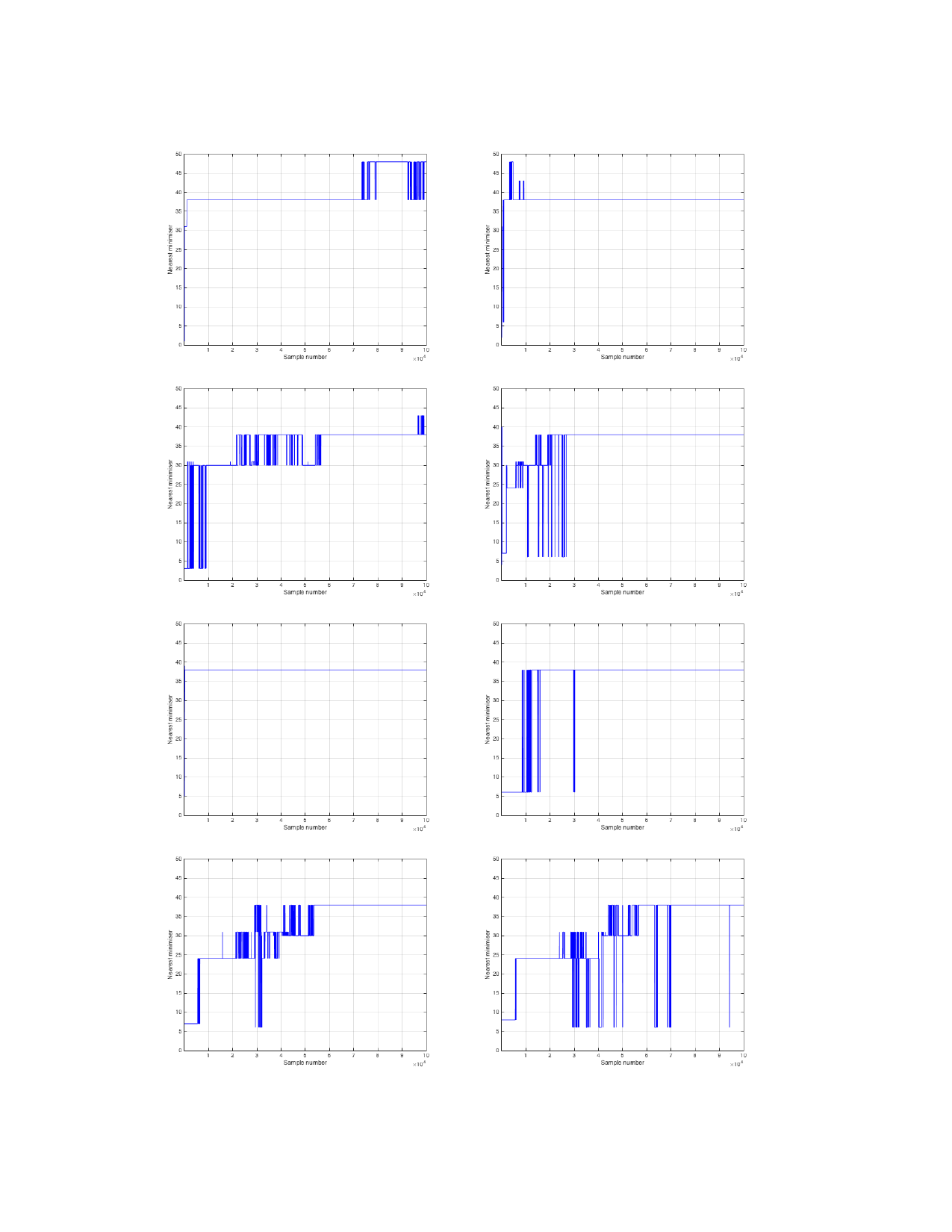}
\end{center}
\caption{(Model 1) The trace of $m_n$ as defined by (\ref{eq:mn}), when the chain is initialised at a variety of minimisers -- specifically numbers $1,2,\ldots,8$.}
\label{argmin1}
\end{figure}

\begin{figure}[h!]
\begin{center}
\includegraphics[width=\textwidth,trim=2.6cm 3cm 3cm 3cm]{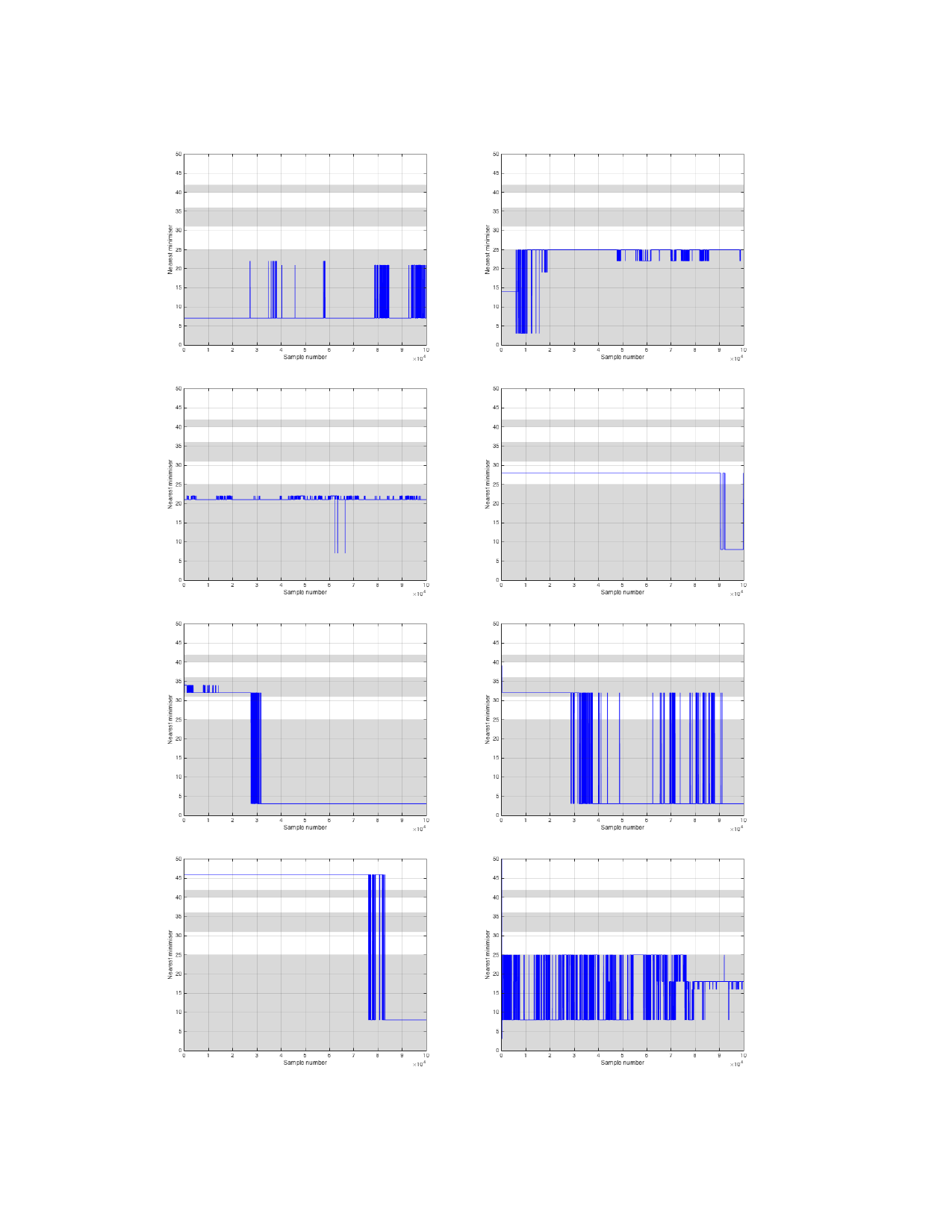}
\end{center}
\caption{(Model 2) The trace of $m_n$ as defined by (\ref{eq:mn}), when the chain is initialised at a variety of minimisers -- specifically numbers $7,14,21,28,35,39,46$ and $50$. The different classes are alternately shaded.}
\label{argmin2}
\end{figure}

\begin{figure}[h!]
\begin{center}
\includegraphics[width=\textwidth,trim=2.6cm 3cm 3cm 3cm]{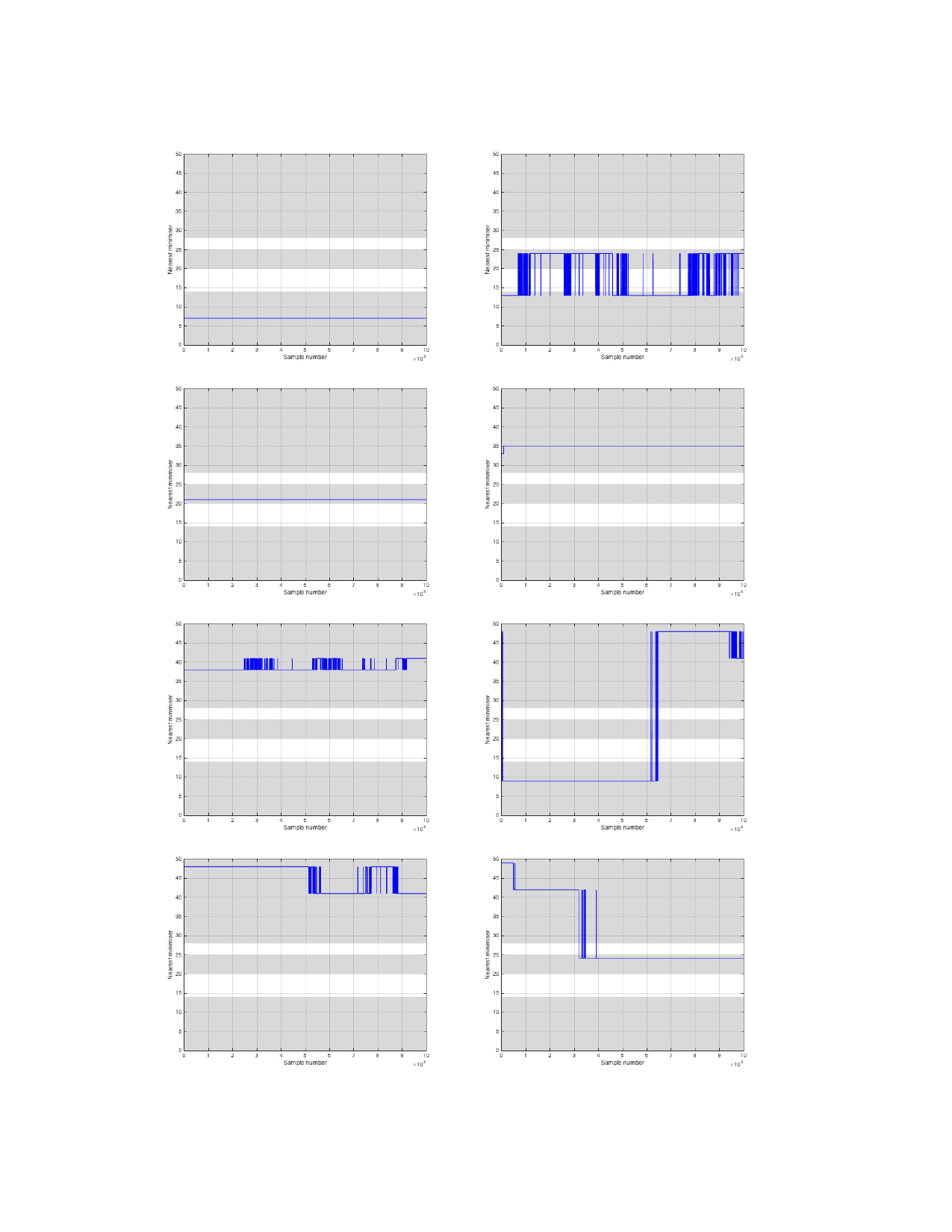}
\end{center}
\caption{(Model 3) The trace of $m_n$ as defined by (\ref{eq:mn}), when the chain is initialised at a variety of minimisers -- specifically numbers $7,13,21,33,38,47,48$ and $49$. The different classes are alternately shaded.}
\label{argmin3}
\end{figure}


\section{Conclusions and Future Work}
\label{sec:concl}

\mmd{
We have made a new contribution to the recently developed theory
of MAP estimation in infinite dimensions \cite{map,wmap}. We link
MAP estimation to a variational Onsager-Machlup functional. The
work is focused on  priors for piecewise Gaussian random fields,
with random interfaces parameterised finite-dimensionally. Such
fields arise naturally in applications such as groundwater flow
and EIT, and these are used to illustrate the theory and numerics.
The work opens up several new avenues for investigation. A major
theoretical direction is to fully reconcile the approaches
in \cite{map} and \cite{wmap}; the work in this paper suggests that
this may be possible. On the applications side an important
new direction would be to consider problems in which the  
geometric parameters are no longer independent from the fields a priori. A possible extension could be to treat the geometric parameters as hyperparameters for the fields under the prior. This would allow, for example, the fields to have specific boundary conditions at the interfaces, which may be more physically appropriate in some contexts. A related hierarchical model was considered in \cite{NF98}, in which prior samples were piecewise white; this could be extended to allow for spatial correlations in the continuum setting. Computationally an exciting direction is to build upon definitions of
MAP estimators to develop hybrid algorithms which fully exploit
local minimiser structure of the Onsager-Machlup functional within
MCMC.
}

\section{Appendix}
In this appendix we provide proofs of the results given in the paper.

\subsection{Results From Section \ref{sec:fwd}}
Before we prove Lemma \ref{lem:cont_darcy} we require the following lemma.
\begin{lemma}
\label{lem:dct2}
Let $(X,\mathcal{F},\mu)$ be a measure space and $f \in L^1(X,\mathcal{F},\mu)$. Let $B_n\subseteq \mathcal{F}$ be a sequence of measurable subsets of $X$ with $\mu(B_n)\rightarrow 0$ as $n\rightarrow\infty$. Then
\begin{eqnarray*}
\int_{B_n}f(x)\,\mu(\dee x)\rightarrow 0\;\;\;\text{as }n\rightarrow \infty.
\end{eqnarray*}
\end{lemma}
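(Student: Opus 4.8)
The plan is to prove this standard \emph{absolute continuity of the integral} by reducing to nonnegative $f$ and approximating by bounded functions. Since $\bigl|\int_{B_n} f\,\dee\mu\bigr| \le \int_{B_n} |f|\,\dee\mu$, it is enough to establish the claim with $|f|$ in place of $f$, so I would assume $f \ge 0$ without loss of generality.

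Fix $\varepsilon > 0$. The main step is to choose a level $M > 0$ at which the truncation $f_M := \min\{f,M\}$ already captures most of the mass of $f$: since $0 \le f_M \uparrow f$ pointwise as $M \to \infty$ and $f \in L^1$, monotone convergence yields $M$ with $\int_X (f - f_M)\,\dee\mu < \varepsilon/2$. Then for every $n$ one bounds
\[
\int_{B_n} f\,\dee\mu \;\le\; \int_X (f - f_M)\,\dee\mu + \int_{B_n} f_M\,\dee\mu \;<\; \frac{\varepsilon}{2} + M\,\mu(B_n).
\]
Using $\mu(B_n)\to 0$, pick $N$ so that $\mu(B_n) < \varepsilon/(2M)$ for all $n \ge N$; then $\int_{B_n} f\,\dee\mu < \varepsilon$ for $n \ge N$, and since $\varepsilon$ was arbitrary the conclusion follows.

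I do not anticipate any genuine obstacle: the argument uses only monotone convergence, so it is valid for an arbitrary measure space with no $\sigma$-finiteness assumption, all the relevant quantities being finite because $f \in L^1$. An alternative route would be to note that $A \mapsto \int_A |f|\,\dee\mu$ defines a finite measure and invoke the $\varepsilon$--$\delta$ form of absolute continuity, or to argue by contradiction after passing to a subsequence with $\sum_k \mu(B_{n_k}) < \infty$ and applying dominated convergence with dominating function $|f|$; both reduce to essentially the same estimate.
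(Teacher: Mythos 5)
Your proof is correct, but it takes a genuinely different route from the paper's. The paper sets $f_n = f\mathds{1}_{B_n}$, observes that $f_n\rightarrow 0$ in measure because $\mu(\{f_n\neq 0\})\leq\mu(B_n)$, and then argues by contradiction: it extracts a subsequence along which $\|f_n\|_{L^1}$ stays bounded away from zero, passes to a further subsequence converging to zero almost everywhere, and applies the dominated convergence theorem with dominating function $|f|$. You instead give a direct truncation argument: reduce to $f\geq 0$, use monotone convergence to pick a level $M$ with $\int_X(f-f_M)\,\dee\mu<\varepsilon/2$, and bound $\int_{B_n}f\,\dee\mu\leq\int_X(f-f_M)\,\dee\mu+M\mu(B_n)$. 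This is the classical $\varepsilon$--$\delta$ form of absolute continuity of the integral; it avoids both the contradiction and the double subsequence extraction, and it yields an explicit quantitative estimate (a modulus of absolute continuity in terms of $M$ and $\mu(B_n)$) rather than a soft limit statement. The paper's argument is shorter to write given that one is willing to quote the ``convergence in measure implies a.e.\ convergence along a subsequence'' fact; yours is more elementary and self-contained. Both are valid on an arbitrary measure space with no $\sigma$-finiteness assumption, and you correctly identify the paper's approach as one of your alternative routes.
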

\begin{proof}
Write $f_n(x) = f(x)\mathds{1}_{B_n}(x)$. We have that $f_n\rightarrow 0$ in measure: for any $\delta > 0$,
\begin{eqnarray*}
\mu(\{x \in X\;|\;|f_n(x)| > \delta\}) \leq \mu(\{x \in X\;|\;|f_n(x)|\neq 0\}) \leq \mu(B_n)\rightarrow 0.
\end{eqnarray*}
Now suppose that $\|f_n\|_{L^1}$ does not tend to zero. Then there exists $\delta > 0$ and a subsequence $(f_{n_k})_{k\geq 1}$ such that $\|f_{n_k}\|_{L^1} \geq \delta$ for all $k \geq 1$. This subsequence still converges to zero in measure, and so admits a further subsequence that converges to zero almost surely. We can bound this subsequence above uniformly by $f$, and so an application of the dominated convergence theorem leads to a contradiction. The result follows.
\end{proof}

\begin{proof}[Proof of Lemma \ref{lem:cont_darcy}]
Showing that $\mathcal{R}$ is well-defined is equivalent to showing that PDE (\ref{pde2}) has a unique solution for all $(u,a) \in X\times\Lambda$.  Since $u^a \in L^\infty(D)$ it is bounded, and so by the continuity and positivity of $\sigma$ there exist $\kappa_{\min},\kappa_{\max} > 0$ with $\kappa_{\min} \leq \sigma(u^a) \leq \kappa_{\max}$. The associated bilinear form is hence bounded and coercive. The right hand side can be seen to lie in $H^{-1}(D)$ since $G \in H^1(D)$ and $\sigma(u^a) \leq \kappa_{\max}$, and so a unique solution exists by Lax-Milgram.
\begin{enumerate}[(i)]
\item In its weak form, the PDE (\ref{pde2}) is given by
\begin{eqnarray*}
\int_D \sigma(u^a)\nabla q_{u,a}\cdot\nabla\varphi = f(\varphi) - \int_D \sigma\nabla G\cdot\nabla\varphi\;\;\;\text{for all }\varphi \in V.
\end{eqnarray*}
Taking $\varphi = q_{u,a}$ we deduce that
\begin{eqnarray*}
\kappa_{\min}(u,a)\|\nabla q_{u,a}\|_{L^2}^2 &\leq \int_D\sigma(u^a)\nabla q_{u,a}\cdot\nabla q_{u,a}\\
&= f(q_{u,a}) - \int_D \sigma(u^a)\nabla G\cdot\nabla q_{u,a}\\
&\leq \|f\|_{V^*}\|q_{u,a}\|_V + \|\sigma(u^a)\|_{L^\infty}\|\nabla G\|_{L^2}\|\nabla q_{u,a}\|_{L^2}
\end{eqnarray*}
and so we have the estimate
\begin{eqnarray*}
\|p_{u,a}\|_V &\leq \|q_{u,a}\|_V + \|G\|_V\\
&\leq (\|f\|_{V^*} + \|\sigma(u^a)\|_{L^\infty}\|G\|_V)/\kappa_{\min}(u,a) + \|G\|_V.
\end{eqnarray*}

\item Let $u,v \in X$ and $a \in \Lambda$. Then $p_{u,a}-p_{v,a}$ satisfies the PDE
\begin{eqnarray*}
\begin{cases}
-\nabla\cdot\big(\sigma(u^a)\nabla(p_{u,a}-p_{v,a})\big) &= \nabla \cdot\big((\sigma(u^a) - \sigma(v^a))\nabla p_{v,a}\big)\;\;\text{ in }D\\
\hspace{2.7cm}p_{u,a} - p_{v,a} &= 0\hspace{3.65cm}\text{ on }\partial D.
\end{cases}
\end{eqnarray*}
Setting $\kappa_*(u,v,a) = \kappa_{\min}(u,a)\wedge\kappa_{\min}(v,a)$, we see
\begin{eqnarray*}
\fl\kappa_*(u,v,a)\|\nabla(p_{u,a} - p_{v,a})\|_{L^2}^2 &\leq \int_D \sigma(u^a)|\nabla(p_{u,a} - p_{v,a})|^2\\
&= \int_D(\sigma(u^a) - \sigma(v^a))\nabla(p_{u,a}-p_{v,a})\cdot\nabla p_{v,a}\\
&\leq \|\sigma(u^a) - \sigma(v^a)\|_{L^\infty}\|\nabla(p_{u,a}-p_{v,a})\|_{L^2}\|\nabla p_{v,a}\|_{L^2}
\end{eqnarray*}
and so by (i),
\begin{eqnarray*}
\fl\|p_{u,a} - p_{v,a}\|_V &\leq \|p_{v,a}\|_V\|\sigma(u^a)-\sigma(v^a)\|_{L^\infty}/\kappa_*(u,v,a)\\
&\leq \|\sigma(u^a) - \sigma(v^a)\|_{L^\infty}\\
&\hspace{1cm}\times \big((\|f\|_{V^*} + \|\sigma(u^a)\|_{L^\infty}\|G\|_V)/\kappa_{*}(u,a)^2 + \|G\|_V/\kappa_{*}(u,a)\big).
\end{eqnarray*}
Using that the $A_i$ are disjoint gives that
\begin{eqnarray*}
\|\sigma(u^a) - \sigma(v^a)\|_{L^\infty} &= \left\|\sigma\left(\sum_{i=1}^N u_i\mathds{1}_{A_i(a)}\right) - \sigma\left(\sum_{i=1}^N v_i\mathds{1}_{A_i(a)}\right)\right\|_{L^\infty}\\
&=\|\sigma(u_k) - \sigma(v_k)\|_{L^\infty}
\end{eqnarray*}
for some $k = k(a)$. Now suppose that $\|u\|_X,\|v\|_X < r$. Then the $C^1$ property of $\sigma$ yields
\begin{eqnarray*}
\fl\|\sigma(u_k)-\sigma(v_k)\|_{L^\infty} \leq \max_{|t|\leq r}|\sigma'(t)|\cdot\|u_k-v_k\|_{L^\infty} \leq \max_{|t|\leq r}|\sigma'(t)|\cdot\|u-v\|_X.
\end{eqnarray*}
Finally we deal with the $\kappa_*^{-j}$ terms:
\begin{eqnarray*}
\kappa_*(u,v,a)^{-j} &= \left[\left(\underset{x\in D}{\mathrm{essinf}}\,e^{u^a(x)}\right)\wedge\left(\underset{x\in D}{\mathrm{essinf}}\,e^{v^a(x)}\right)\right]^{-j}\\
& \leq \left(\min_{|t|\leq r}\sigma(t)\wedge \min_{|t|\leq r}\sigma(t)\right)^{-j}\\
&= \left(\min_{|t|\leq r}\sigma(t)\right)^{-j}.
\end{eqnarray*}
We bound the $\|\sigma(u^a)\|_{L^\infty}$ term similarly. Putting the above bounds together, we have
\begin{eqnarray*}
\fl\|\mathcal{R}(u,a) - \mathcal{R}(v,a)\|_V &= \|p_{u,a}-p_{v,a}\|_V\\
&\leq \max_{j=1,2}\left(\min_{|t|\leq r}\sigma(t)\right)^{-j}\left(\|f\|_{V^*} + \|G\|_V\left(\max_{|t|\leq r}\sigma(t)+1\right)\right)\\
&\hspace{1cm}\times\max_{|t|\leq r}|\sigma'(t)|\cdot\|u-v\|_X\\
&= L(r)\|u-v\|_X.
\end{eqnarray*}
Note that the constant $L(r)$ is uniform in $a$.
\item We use a similar approach to the previous part. Given $u \in X$ and $a,b \in \Lambda$, the difference $p_{u,a}-p_{u,b}$ satisfies
\begin{eqnarray*}
\begin{cases}
-\nabla\cdot(\sigma(u^a)\nabla(p_{u,a}-p_{u,b})) &= \nabla \cdot((\sigma(u^a) - \sigma(u^b))\nabla p_{u,b})\;\;\text{ in }D\\
\hspace{2.7cm}p_{u,a} - p_{u,b} &= 0\hspace{3.65cm}\text{ on }\partial D
\end{cases}
\end{eqnarray*}
which leads to the bound
\begin{eqnarray*}
\fl\kappa_\dagger(u,a,b)\|\nabla(p_{u,a} - p_{u,b})\|_{L^2}^2 &\leq \int_D \sigma(u^a)|\nabla(p_{u,a} - p_{u,b})|^2\\
&= \int_D(\sigma(u^a) - \sigma(u^b))\nabla(p_{u,a}-p_{u,b})\cdot\nabla p_{u,b}\\
&\leq \|\nabla(p_{u,a}-p_{u,b})\|_{L^2}\|(\sigma(u^a)-\sigma(u^b))\nabla p_{u,b}\|_{L^2}
\end{eqnarray*}
where $\kappa_\dagger(u,a,b) = \kappa_{\min}(u,a)\wedge\kappa_{\min}(u,b)$. It follows that
\begin{eqnarray*}
\|p_{u,a} - p_{u,b}\|_V &\leq \|(\sigma(u^a)-\sigma(u^b))\nabla p_{u,b}\|_{L^2}/\kappa_\dagger(u,a,b).
\end{eqnarray*}

Again by the disjointness of the $A_i$ and the $C^1$ property of $\sigma$,
\begin{eqnarray*}
\fl\|(\sigma(u^a) - \sigma(u^b))\nabla p_{u,b}\|_{L^2} &= \left\|\left(\sigma\left(\sum_{i=1}^N u_i\mathds{1}_{A_i(a)}\right) - \sigma\left(\sum_{i=1}^N u_i\mathds{1}_{A_i(b)}\right)\right)\nabla p_{u,b}\right\|_{L^2}\\
&= \left\|\sum_{i=1}^N\left(\sigma\left(u_i\mathds{1}_{A_i(a)}\right) - \sigma\left(u_i\mathds{1}_{A_i(b)}\right)\right)\nabla p_{u,b}\right\|_{L^2}\\
&\leq \sum_{i=1}^N\left\|\left(\sigma\left(u_i\mathds{1}_{A_i(a)}\right) - \sigma\left(u_i\mathds{1}_{A_i(b)}\right)\right)\nabla p_{u,b}\right\|_{L^2}\\
&\leq \sum_{i=1}^N \max_{|t|\leq\|u_i\|_\infty}|\sigma'(t)|\cdot\left\|\left|u_i\mathds{1}_{A_i(a)} - u_i\mathds{1}_{A_i(b)}\right|\nabla p_{u,b}\right\|_{L^2}\\
&\leq \sum_{i=1}^N \max_{|t|\leq\|u_i\|_\infty}|\sigma'(t)|\cdot\|u_i\|_\infty\left\|\mathds{1}_{A_i(a)\Delta A_i(b)}\nabla p_{u,b}\right\|_{L^2}
\end{eqnarray*}
since $|\mathds{1}_A - \mathds{1}_B| = \mathds{1}_{A\Delta B}$. Now as before we can bound $\kappa_\dagger^{-1}$:
\begin{eqnarray*}
\kappa_\dagger(u,v,a)^{-1} &= \left[\left(\underset{x\in D}{\mathrm{essinf}}\,e^{u^a(x)}\right)\wedge\left(\underset{x\in D}{\mathrm{essinf}}\,e^{u^b(x)}\right)\right]^{-1}\\
&\leq \left(\min_{|t|\leq \max \|u_i\|_\infty}\sigma(t)\wedge \min_{|t|\leq \max \|u_i\|_\infty}\sigma(t)\right)^{-1}\\
&\leq \left(\min_{|t|\leq \|u\|_X}\sigma(t)\right)^{-1}.
\end{eqnarray*}
Putting the above bounds together, we have
\begin{eqnarray*}
\fl\|\mathcal{R}(u,a)&-\mathcal{R}(u,b)\|_V = \|p_{u,a}-p_{u,b}\|_V\\
\fl&\leq \left(\min_{|t|\leq \|u\|_X}\sigma(t)\right)^{-1}\sum_{i=1}^N \max_{|t|\leq\|u_i\|_\infty}|\sigma'(t)|\cdot\|u_i\|_\infty\left\|\mathds{1}_{A_i(a)\Delta A_i(b)}\nabla p_{u,b}\right\|_{L^2}\\
\fl&\leq \left(\min_{|t|\leq \|u\|_X}\sigma(t)\right)^{-1}\sum_{i=1}^N \max_{|t|\leq\|u_i\|_\infty}|\sigma'(t)|\cdot\|u_i\|_\infty\left(\int_{A_i(a)\Delta A_i(b)}|\nabla p_{u,b}|^2\right)^{1/2}.
\end{eqnarray*}

The right hand goes to zero as each $|A_i(a)\Delta A_i(b)| \rightarrow 0$ by Lemma \ref{lem:dct2}, since $|\nabla p_{u,b}| \in L^2(D)$, and so the continuity of $\mathcal{R}(u,\cdot)$ follows from the assumed continuity of the maps $A_i$.
\end{enumerate}
\end{proof}

\begin{proof}[Proof of Proposition \ref{prop:cont_eit}]
\begin{enumerate}
\item Theorem 2.3 in \cite{frechet} tells us that the mapping from the conductivity to the weak solution of (\ref{eq:eitpde}) is Fr\'echet differentiable with respect to the supremum norm, and hence locally Lipschitz. Note that the mapping from the solution to the boundary voltage measurements, $(v,V) \mapsto V$, is smooth, and the assumptions on $\sigma$ imply that it is Lipschitz. It hence suffices to show that the mapping $u\mapsto F(u,a)$ is Lipschitz for each $a \in \Lambda$. Let $u,v \in X$ and $a \in \Lambda$, then
\begin{eqnarray*}
\|F(u,a) - F(v,a)\|_\infty \leq \sum_{i=1}^N \|u_i - v_i\|_\infty\mathds{1}_{A_i(a)} \leq C\|u - v\|_X
\end{eqnarray*}
and the result follows.
\item By Corollary 2.8 in \cite{eit} and the continuity of $\sigma$, it suffices to show that $a_n\rightarrow a$ in $\Lambda$ implies that $F(u,a_n)\rightarrow F(u,a)$ in measure. For any $p \in (1,\infty)$ we have that
\begin{eqnarray*}
\int_D |F(u,a_n) - F(u,a)|^p\,\dee x &\leq \sum_{i=1}^N \int_D |u_i|^p\mathds{1}_{A_i(a_n)\Delta A_i(a)}\,\dee x\\
&\leq \sum_{i=1}^N\|u_i\|_\infty^p\cdot |A_i(a_n)\Delta A_i(a)|
\end{eqnarray*}
From the assumed continuity of $A_i(\cdot)$ it follows that $F(u,a_n)\rightarrow F(u,a)$ in $L^p$ for any $p \in (1,\infty)$, and hence in measure.
\end{enumerate}
\end{proof}

\subsection{Results From Section \ref{sec:post}}

\begin{proof}[Proof of Theorem \ref{thm:post_exist}]
\begin{enumerate}[(i)]
\item We first claim that the assumptions on $\Phi$ mean that $\Phi(\cdot,\cdot;y):X'\times\Lambda'\rightarrow\mathbb{R}$ is continuous for each $y \in Y$. Fix $y \in Y$ and $(u,a) \in X'\times\Lambda'$. Choose any approximating sequence $(u_n,a_n)_{n\geq 1}\subseteq X'\times\Lambda'$ such that $(u_n,a_n)\rightarrow(u,a)$. Then the assumptions on the norm on $X\times\Lambda$ means that $\|u_n-u\|_X\rightarrow 0$ and $|a_n-a|\rightarrow 0$. Letting $r > \max\{\|u\|_X,\sup_n\|u_n\|_X\}$, we may approximate
\begin{eqnarray*}
\fl|\Phi(u_n,a_n;y)-\Phi(u,&a;y)| \leq |\Phi(u_n,a_n;y)-\Phi(u,a_n;y)| + |\Phi(u,a_n;y)-\Phi(u,a;y)|\\
\fl&\leq M_3(r,a_n,y)\|u_n-u\|_X + |\Phi(u,a_n;y)-\Phi(u,a;y)|\\
\fl&\leq \left(\sup_{k\in\mathbb{N}}M_3(r,a_k,y)\right)\cdot\|u_n-u\|_X + |\Phi(u,a_n;y)-\Phi(u,a;y)|
\end{eqnarray*}
where the supremum is finite due the continuity of $M_3$ in its second component. Since $\Phi$ is also continuous in its second component, we see that the right-hand side tends to zero as $(u_n,a_n)\rightarrow (u,a)$.

Now as $\Phi(\cdot,\cdot;y):X'\times\Lambda'\rightarrow\mathbb{R}$ is continuous and $(\mu_0\times\nu_0)(X'\times\Lambda') = 1$, $\Phi(\cdot,\cdot;y)$ is $\mu_0\times\nu_0$-measurable.
Setting $Z = X'\times\Lambda'$, we can consider $\Phi:Z\times Y\rightarrow\mathbb{R}$. This is a Caratheodory function, and it is known that these are jointly measurable, see for example \cite{hitchhiker}. We conclude that $\Phi$ is $\mu_0\times\nu_0\times\mathbb{Q}_0$ measurable.

\item We first show $Z(y)$ is finite. Since $\mu_0$ is Gaussian, by Fernique's theorem there exists $\alpha > 0$ such that
\begin{eqnarray*}
\int_{X}\exp(\alpha\|u\|_X^2)\,\mu_0(\dee u) < \infty.
\end{eqnarray*}
Then using Assumptions \ref{assump:map}(i), we have the lower bound
\begin{eqnarray*}
\Phi(u,a;y) \geq M_1(\alpha) - \alpha\|u\|_X^2
\end{eqnarray*}
from which we conclude that $Z(y) < \infty$.

Now fix $r > 0$. Let $y \in Y$ and take $(u,a) \in X'\times\Lambda'$ with $\max\{\|u\|_X,|a|\} < r$. Then we have by the local Lipschitz property
\begin{eqnarray*}
\fl|\Phi(u,a;y)| \leq M_3(r,y)\|u\|_X + |\Phi(0,a;y)| \leq M_3(r,a,y)r + |\Phi(0,a;y)|.
\end{eqnarray*}
Using the continuity of $\Phi$ and $M_3$ in $a$, we can maximise the right hand side over $|a| < r$ to deduce that
\begin{eqnarray*}
|\Phi(u,a;y)| \leq K(r,y) .
\end{eqnarray*}
Thus $\Phi(\cdot,\cdot;y)$ is bounded on bounded sets.

Now we can proceed as in \cite{lecturenotes}. Using that $(\mu_0\times\nu_0)(X'\times\Lambda') = 1$, we have that
\begin{eqnarray*}
Z(y) = \int_{X'\times\Lambda'}\exp(-\Phi(u,a;y))\,\mu_0(\dee u)\nu_0(\dee a).
\end{eqnarray*}
Set $B' = (X'\times\Lambda')\cap B$, and set
\begin{eqnarray*}
R = \sup\{\max\{\|u\|_X,|a|\}\,|\,(u,a) \in B'\}.
\end{eqnarray*}
We deduce that
\begin{eqnarray*}
\sup_{(u,a) \in B'}\Phi(u,a;y) \leq K(R,y) < \infty
\end{eqnarray*}
and so
\begin{eqnarray*}
\fl Z(y) \geq \int_{B'}\exp(-K(R,y))\,\mu_0(\dee u)\nu_0(\dee a) = \exp(-K(R,y))(\mu_0\times\nu_0)(B') > 0.
\end{eqnarray*}
Hence the measure $\mu^y$ is well-defined.
\item The well-posedness of the posterior is proved in virtually the same way as Theorem 4.5 in \cite{lecturenotes}.
\end{enumerate}
\end{proof}


\subsection{Results From Section \ref{sec:map}}
Throughout this section, for $\delta > 0$ and $(u,a) \in X\times\Lambda$, we will denote $\mathcal{J}^\delta(u,a) = \mu(B^\delta(u,a))$. To prove Theorems \ref{omthm} and \ref{exist}, we first require two lemmas.

\begin{lemma}
\label{ombasic}
Let $(u_1,a_1), (u_2,a_2) \in E\times \intr(S)$. Then
\begin{eqnarray*}
\lim_{\delta\downarrow 0} \frac{(\mu_0\times\nu_0)(B^\delta(u_1,a_1))}{(\mu_0\times\nu_0)(B^\delta(u_2,a_2))} &= e^{\frac{1}{2}\|u_2\|_E^2 - \frac{1}{2}\|u_1\|_E^2}\cdot\frac{\rho(a_1)}{\rho(a_2)}\\
&= \exp\left(J(u_2) + K(a_2) - J(u_1) - K(a_1)\right).
\end{eqnarray*}
\end{lemma}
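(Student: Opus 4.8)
The plan is to exploit the product structure of the reference measure $\mu_0\times\nu_0$ to factor the small-ball ratio into an $X$-factor and a $\Lambda$-factor, and then treat each factor separately using results already available. First I would choose the norm on $X\times\Lambda$ so that balls factor conveniently; since the paper only requires a complete norm with $\|(u,a)\|\to 0 \Rightarrow \|u\|_X\to 0$ and $|a|\to 0$, I would use the max norm $\|(u,a)\| = \|u\|_X \vee |a|$, so that $B^\delta(u,a) = B^\delta(u)\times B^\delta(a)$, and hence by independence
\begin{eqnarray*}
\frac{(\mu_0\times\nu_0)(B^\delta(u_1,a_1))}{(\mu_0\times\nu_0)(B^\delta(u_2,a_2))} = \frac{\mu_0(B^\delta(u_1))}{\mu_0(B^\delta(u_2))}\cdot\frac{\nu_0(B^\delta(a_1))}{\nu_0(B^\delta(a_2))}.
\end{eqnarray*}
(If one insists on a general norm, a short squeezing argument comparing $B^\delta$ with inner/outer product balls of radii $c\delta$ and $C\delta$ gives the same limit, since the Gaussian small-ball ratio limit is insensitive to such rescaling — see the Cameron-Martin/Carmona-type estimates underlying Theorem 3.2 of \cite{map}.)

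Next I would pass to the limit in each factor. For the $\Lambda$-factor, since $a_1, a_2 \in \intr(S)$ and $\rho$ is continuous and strictly positive there, the computation already carried out in subsection \ref{ssec:priorgeo} gives $\lim_{\delta\downarrow 0}\nu_0(B^\delta(a_1))/\nu_0(B^\delta(a_2)) = \rho(a_1)/\rho(a_2) = \exp(K(a_2) - K(a_1))$. For the $X$-factor, $\mu_0 = N(0,\bigoplus_i\mathcal{C}_i)$ is a centred Gaussian on the separable Banach space $X$ with Cameron-Martin space $E$, and $u_1, u_2 \in E$; the standard small-ball result for Gaussian measures (e.g. Proposition 18.3 in \cite{lifshits}, cited in subsection \ref{ssec:priorfield}, or the argument behind Theorem 3.2 of \cite{map}) gives $\lim_{\delta\downarrow 0}\mu_0(B^\delta(u_1))/\mu_0(B^\delta(u_2)) = \exp\bigl(\tfrac12\|u_2\|_E^2 - \tfrac12\|u_1\|_E^2\bigr) = \exp(J(u_2) - J(u_1))$. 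Multiplying the two limits yields the claim.

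I do not expect any serious obstacle here: the lemma is essentially a bookkeeping statement combining the Gaussian small-ball lemma with the elementary finite-dimensional computation for $\nu_0$. The only point requiring a little care is the legitimacy of factoring the ball — i.e. ensuring the chosen norm (or the squeezing argument for a general norm) does not disturb the limit — and confirming that the cited Gaussian small-ball result applies verbatim to $\mu_0$ on $X = C^0(D;\mathbb{R}^N)$, which it does since $\mu_0$ is a genuine Gaussian measure on this separable Banach space with the stated Cameron-Martin space. The identification of the exponents with $J$ and $K$ is then immediate from their definitions in subsections \ref{ssec:priorfield} and \ref{ssec:priorgeo}.
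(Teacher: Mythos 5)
Your overall strategy --- factor the small-ball ratio into an $X$-factor and a $\Lambda$-factor and quote the Gaussian small-ball lemma together with the Lebesgue differentiation computation from subsection \ref{ssec:priorgeo} --- is the natural first attempt, and Remark \ref{separate} does record that such a factorisation is valid. But there the factorisation is a \emph{consequence} of Lemma \ref{ombasic} having already been proved for an arbitrary admissible norm; it is not available as a tool for proving it. The norm on $X\times\Lambda$ is fixed once and for all in section \ref{sec:prior} (any complete norm whose convergence implies convergence of both components), and the balls $B^\delta(u,a)$ appearing in the lemma, in $\mathcal{J}^\delta$, and in the definition of MAP estimators are all taken in that norm. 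You are therefore not free to ``choose'' the max norm: you must handle a ball that need not be a product set. The paper flags exactly this point (``some care must be taken since we cannot (a priori) separate the integrals over balls in $X\times\Lambda$ into products'') and resolves it by rerunning the Lifshits/Cameron--Martin argument directly on the product-space ball $B^\delta(0,a_1)$ --- which is still convex and symmetric under $u\mapsto -u$ --- using the \v{S}id\'ak correlation inequality for the upper bound, and then treating the $\nu_0$-contribution by a translation and mean-value argument on the (non-product) slices.

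Your fallback --- squeezing $B^\delta$ between product balls of radii $c\delta$ and $C\delta$ --- does not repair this. The two norms are indeed equivalent (continuity of the identity at $0$ plus the bounded inverse theorem), but the resulting sandwich compares balls of \emph{different} radii in numerator and denominator, and small-ball probabilities are extremely sensitive to the radius: the centred Gaussian ratio $\mu_0(B^{C\delta}(0))/\mu_0(B^{c\delta}(0))$ typically diverges as $\delta\downarrow 0$ when $C>c$ (e.g.\ for Wiener measure $-\log\mu_0(B^\delta(0))\asymp\delta^{-2}$), and even the finite-dimensional factor picks up a spurious $(C/c)^k$ from the ratio of Lebesgue volumes. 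The sandwich therefore yields only $0\le\liminf\le\limsup\le\infty$. The assertion that ``the Gaussian small-ball ratio limit is insensitive to such rescaling'' is the gap: the limit is insensitive to translating the centre within $E$ at fixed radius --- which is precisely what the Cameron--Martin argument exploits --- but not to changing the radius.
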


\begin{proof}
We adapt the proof of Proposition 18.3 in \cite{lifshits} to first show that
\begin{eqnarray*}
(\mu_0\times\nu_0)(B^\delta(u_1,a_1)) \sim e^{-\frac{1}{2}\|u_1\|_E^2}(\mu_0\times\nu_0)(B^\delta(0,a_1))\;\;\;\text{ as }\delta\downarrow 0.
\end{eqnarray*}
The first half of the proof is almost identical to that in \cite{lifshits}, though some care must be taken since we cannot (a priori) separate the integrals over balls in $X\times\Lambda$ into products of those over balls in $X$ and $\Lambda$. Using the Cameron-Martin theorem we see that
\begin{eqnarray*}
(\mu_0\times\nu_0)(B^\delta(u_1,a_1)) = e^{-\frac{1}{2}\|u_1\|_E^2}\int_{B^\delta(0,a_1)}e^{\langle u_1,u\rangle_E}\,\mu_0(\dee u)\nu_0(\dee a).
\end{eqnarray*}
Since $\langle u_1,-u\rangle_E = -\langle u_1,u\rangle_E$ and $B^\delta(0,a_1)$ is symmetric about $0 \in X$, it follows that
\begin{eqnarray*}
\int_{B^\delta(0,a_1)}e^{\langle u_1,u\rangle_E}\,\mu_0(\dee u)\nu_0(\dee a) &= \int_{B^\delta(0,a_1)}\frac{1}{2}\left(e^{\langle u_1,u\rangle_E}+e^{-\langle u_1,u\rangle_E}\right)\,\mu_0(\dee u)\nu_0(\dee a)\\
&\geq (\mu_0\times\nu_0)(B^\delta(0,a_1))
\end{eqnarray*}
which gives the inequality
\begin{eqnarray}
\label{thm1lower}
(\mu_0\times\nu_0)(B^\delta(u_1,a_1)) \geq e^{-\frac{1}{2}\|u_1\|_E^2}(\mu_0\times\nu_0)(B^\delta(0,a_1)).
\end{eqnarray}
For the opposite bound, we write $\langle u_1,\cdot\rangle_E$ as the sum of two functionals $z_c$ and $z_s$ on $E$. We aim to choose $z_c$ to be continuous on $E$, and $z_s$ `small' in some sense. Then we have that
\begin{eqnarray*}
\fl(\mu_0\times\nu_0)(B^\delta(u_1,a_1)) &= e^{-\frac{1}{2}\|u_1\|_E^2}\int_{B^\delta(0,a_1)}e^{z_c(u) + z_s(u)}\,\mu_0(\dee u)\nu_0(\dee a)\\
&\leq \exp\left(-\frac{1}{2}\|u_1\|_E^2 +\delta\cdot\sup_{(u,a) \in B^1(0,a_1)} z_c(u)\right)\cdot\\
&\hspace{2cm}\left[(\mu_0\times\nu_0)(B^\delta(0,a_1)) + \int_{B^\delta(0,a_1)}(e^{z_s(u)} - 1)\,\mu_0(\dee u)\nu_0(\dee a)\right]
\end{eqnarray*}
where we have used the linearity of $z_c$ to extract $\delta$ from the supremum. As in \cite{lifshits}, using a result from \cite{sidak}, a special case of the Gaussian correlation conjecture, it follows that for any $C \in \mathbb{R}$ and any convex set $B\subseteq X$ symmetric about $0$,
\begin{eqnarray*}
\mu_0(B\cap\{u \in X\;|\;|z_s(u)|>C\}) \leq \mu_0(B)\mu_0(|z_s(\cdot)| > C).
\end{eqnarray*} 
Then for any increasing function $\varphi:\mathbb{R}_+\rightarrow\mathbb{R}_+$, one has
\begin{eqnarray*}
\fl\int_{B^\delta(0,a_1)}\varphi&(|z_s(u)|)\,\mu_0(\dee u)\nu_0(\dee a) = \int_{X\times\Lambda} \varphi(|z_s(u)|)\mathds{1}_{B^\delta(0,a_1)}(u,a)\,\mu_0(\dee u)\nu_0(\dee a)\\
\fl&=\int_0^\infty (\mu_0\times\nu_0)(\{(u,a) \in B^\delta(0,a_1)\;|\;\varphi(|z_s(u)|) > t\})\,\dee t\\
\fl&=\int_0^\infty (\mu_0\times\nu_0)(\{(u,a) \in B^\delta(0,a_1)\;|\;|z_s(u)| > \varphi^{-1}(t)\})\,\dee t\\
\fl&=\int_0^\infty \int_\Lambda \mu_0(\{u \in X\;|\;(u,a) \in B^\delta(0,a_1), |z_s(u)| > \varphi^{-1}(t)\})\,\nu_0(\dee a)\dee t\\
\fl&\leq \int_0^\infty \int_\Lambda \mu_0(\{u \in X\;|\;(u,a) \in B^\delta(0,a_1)\})\mu_0(|z_s(\cdot)| > \varphi^{-1}(t))\,\nu_0(\dee a)\dee t\\
\fl&= \int_0^\infty \mu_0(|z_s(\cdot)| > \varphi^{-1}(t))\underbrace{\left(\int_\Lambda \mu_0(\{u \in X\;|\;(u,a) \in B^\delta(0,a_1)\})\,\nu_0(\dee a)\right)}_{(\mu_0\times\nu_0)(B^\delta(0,a_1))}\dee t\\
\fl&= (\mu_0\times\nu_0)(B^\delta(0,a_1))\int_0^\infty \mu_0(|z_s(\cdot)| > \varphi^{-1}(t))\,\dee t\\
\fl&= (\mu_0\times\nu_0)(B^\delta(0,a_1))\int_0^\infty \mu_0(\varphi(|z_s(\cdot)|) > t)\,\dee t\\
\fl&= (\mu_0\times\nu_0)(B^\delta(0,a_1))\int_{X} \varphi(|z_s(u)|)\,\mu_0(\dee u).
\end{eqnarray*}
Choosing $\varphi(\cdot) = \exp(\cdot) - 1$ in this formula gives
\begin{eqnarray*}
\fl\int_{B^\delta(0,a_1)}(e^{|z_s(u)|} - 1)\,\mu_0(\dee u)\nu_0(\dee a) \leq (\mu_0\times\nu_0)(B^\delta(0,a_1))\int_{X} (e^{|z_s(u)|} - 1)\,\mu_0(\dee u).
\end{eqnarray*}
The space of linear measurable functionals on $E$, which contains $\langle u_1,\cdot\rangle_E$, is the $L^2$ closure of $E^*$. Thus for any $\varepsilon > 0$, the functionals $z_c, z_s$ can be chosen in order that the first of them is continuous and the second of them satisfies the inequality
\begin{eqnarray*}
\int_{X} (e^{|z_s(u)|} - 1)\,\mu_0(\dee u) \leq \varepsilon.
\end{eqnarray*}
It follows that for each $\varepsilon > 0$ we have
\begin{eqnarray}
\notag
\fl(\mu_0\times\nu_0)&(B^\delta(u_1,a_1))\\
\label{thm1upper}
\fl&\leq \exp\left(-\frac{1}{2}\|u_1\|_E^2 +\delta\cdot\sup_{(u,a) \in B^1(0,a_1)} z_c(u)\right)(\mu_0\times\nu_0)(B^\delta(0,a_1))(1+\varepsilon).
\end{eqnarray}
Since balls are bounded, $\varepsilon > 0$ is arbitrary and $z_c$ is continuous, we can combine (\ref{thm1lower}) and (\ref{thm1upper}) to deduce that there exists $M > 0$ such that
\begin{eqnarray*}
\fl e^{-\frac{1}{2}\|u_1\|_E^2}(\mu_0\times\nu_0)(B^\delta(0,a_1)) \leq(\mu_0\times\nu_0)(B^\delta(u_1,a_1)) &\leq e^{-\frac{1}{2}\|u_1\|_E^2 +M\delta}(\mu_0\times\nu_0)(B^\delta(0,a_1)).
\end{eqnarray*}
Now looking at the ratio of measures we see
\begin{eqnarray*}
\lim_{\delta\downarrow 0}\frac{(\mu_0\times\nu_0)(B^\delta(u_1,a_1))}{(\mu_0\times\nu_0)(B^\delta(u_2,a_2))} = e^{\frac{1}{2}\|u_2\|_E^2 - \frac{1}{2}\|u_1\|_E^2}\cdot\lim_{\delta\downarrow 0}\frac{(\mu_0\times\nu_0)(B^\delta(0,a_1))}{(\mu_0\times\nu_0)(B^\delta(0,a_2))}.
\end{eqnarray*}
We now deal with the geometric parameters. Let $a^*\in \intr(S)$ so that $\rho$ is positive in a neighbourhood of $a^*$ (we may take $a^* = a_1$ or $a_2$ since we assume they lie in $\intr(S)$). Then
\begin{eqnarray*}
\frac{(\mu_0\times\nu_0)(B^\delta(0,a_1))}{(\mu_0\times\nu_0)(B^\delta(0,a_2))} &= \frac{\int_{B^\delta(0,a_1)}\rho(a)\mu_0(\dee u)\dee a}{\int_{B^\delta(0,a_2)}\rho(a)\mu_0(\dee u)\dee a}\\
&= \frac{\int_{B^\delta(0,a^*)}\rho(a+a_1-a^*)\,\mu_0(\dee u)\dee a}{\int_{B^\delta(0,a^*)}\rho(a+a_2-a^*)\,\mu_0(\dee u)\dee a}\\
&= \frac{\int_{B^\delta(0,a^*)}\frac{\rho(a+a_1-a^*)}{\rho(a)}\,\mu_0(\dee u)\nu_0(\dee a)}{\int_{B^\delta(0,a^*)}\frac{\rho(a+a_2-a^*)}{\rho(a)}\,\mu_0(\dee u)\nu_0(\dee a)}.
\end{eqnarray*}
For sufficiently small $\delta$ both of the integrands are continuous.  A mean-value property hence holds for the integrals, and so we may divide both the numerator and denominator by $(\mu_0\times\nu_0)(B^\delta(0,a^*))$ and take limits to obtain
\begin{eqnarray*}
\lim_{\delta\downarrow 0}\frac{\displaystyle(\mu_0\times\nu_0)(B^\delta(0,a_1))}{\displaystyle(\mu_0\times\nu_0)(B^\delta(0,a_2))} &= \frac{\displaystyle \frac{\rho(a+a_1-a^*)}{\rho(a)}\bigg|_{a = a^*}}{\displaystyle \frac{\rho(a+a_2-a^*)}{\rho(a)}\bigg|_{a = a^*}}\\
&= \frac{\rho(a_1)}{\rho(a_2)}.
\end{eqnarray*}
We conclude that
\begin{eqnarray*}
\lim_{\delta\downarrow 0} \frac{(\mu_0\times\nu_0)(B^\delta(u_1,a_1))}{(\mu_0\times\nu_0)(B^\delta(u_2,a_2))} &= e^{\frac{1}{2}\|u_2\|_E^2 - \frac{1}{2}\|u_1\|_E^2}\cdot\frac{\rho(a_1)}{\rho(a_2)}\\
&= \exp\left(J(u_2) + K(a_2) - J(u_1) - K(a_1)\right).
\end{eqnarray*}
\end{proof}

\begin{lemma}
\label{lem:omcts}
Let $f,g:\Lambda\rightarrow\mathbb{R}$ be continuous, and $(u_1,a_1), (u_2,a_2) \in E\times\intr(S)$. Then
\begin{eqnarray*}
\lim_{\delta\downarrow 0}\frac{\int_{B^\delta(u_1,a_1)}f(a)\,\mu_0(\dee u)\nu_0(\dee a)}{\int_{B^\delta(u_2,a_2)}g(a)\,\mu_0(\dee a)\nu_0(\dee a)} = e^{\frac{1}{2}\|u_2\|_E^2 - \frac{1}{2}\|u_1\|_E^2}\cdot\frac{\rho(a_1)}{\rho(a_2)}\cdot\frac{f(a_1)}{g(a_2)}.
\end{eqnarray*}
\end{lemma}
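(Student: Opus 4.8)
The plan is to reduce the statement to Lemma~\ref{ombasic} together with one auxiliary fact: for every continuous $h:\Lambda\to\mathbb{R}$ and every $(u_0,a_0)\in E\times\intr(S)$,
\begin{eqnarray*}
\lim_{\delta\downarrow 0}\frac{\int_{B^\delta(u_0,a_0)}h(a)\,\mu_0(\dee u)\nu_0(\dee a)}{(\mu_0\times\nu_0)(B^\delta(u_0,a_0))} = h(a_0).
\end{eqnarray*}
Granting this, and recalling (as in the proof of Lemma~\ref{ombasic}, using $u_i\in E$ and nondegeneracy of $\mu_0$, together with $a_i\in\intr(S)$ and $\rho>0$ there) that each $(\mu_0\times\nu_0)(B^\delta(u_i,a_i))$ is strictly positive, I would factor the ratio in the statement as the product of
\begin{eqnarray*}
\frac{\int_{B^\delta(u_1,a_1)}f(a)\,\mu_0(\dee u)\nu_0(\dee a)}{(\mu_0\times\nu_0)(B^\delta(u_1,a_1))},\qquad \frac{(\mu_0\times\nu_0)(B^\delta(u_1,a_1))}{(\mu_0\times\nu_0)(B^\delta(u_2,a_2))},\qquad \frac{(\mu_0\times\nu_0)(B^\delta(u_2,a_2))}{\int_{B^\delta(u_2,a_2)}g(a)\,\mu_0(\dee u)\nu_0(\dee a)}.
\end{eqnarray*}
By the auxiliary fact the first factor tends to $f(a_1)$ and the third tends to $1/g(a_2)$, while by Lemma~\ref{ombasic} the middle factor tends to $e^{\frac{1}{2}\|u_2\|_E^2-\frac{1}{2}\|u_1\|_E^2}\rho(a_1)/\rho(a_2)$; multiplying the three limits gives the claimed value. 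One uses here the tacit hypothesis $g(a_2)\neq 0$, without which the right-hand side is undefined; by continuity $g$ is then of one sign near $a_2$, so for small $\delta$ the denominators in the third factor are nonzero.

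To prove the auxiliary fact, fix $\eps>0$ and use continuity of $h$ at $a_0$ to choose $r>0$ with $|h(a)-h(a_0)|<\eps$ for all $a\in\Lambda$ with $|a-a_0|<r$. The assumption on the norm of $X\times\Lambda$ forces the $\Lambda$-projection of $B^\delta(u_0,a_0)$ into $\{a:|a-a_0|<r\}$ for all sufficiently small $\delta$ --- otherwise one could extract a sequence $(v_n,b_n)$ with $\|(v_n-u_0,b_n-a_0)\|\to 0$ but $|b_n-a_0|\geq r$, contradicting the hypothesis. Hence $|h(a)-h(a_0)|<\eps$ on $B^\delta(u_0,a_0)$ for such $\delta$, and writing
\begin{eqnarray*}
\int_{B^\delta(u_0,a_0)}h(a)\,\mu_0(\dee u)\nu_0(\dee a) = \int_{B^\delta(u_0,a_0)}\big(h(a)-h(a_0)\big)\,\mu_0(\dee u)\nu_0(\dee a) + h(a_0)(\mu_0\times\nu_0)(B^\delta(u_0,a_0)),
\end{eqnarray*}
then dividing through by $(\mu_0\times\nu_0)(B^\delta(u_0,a_0))>0$ and estimating the first term by $\eps$ shows that the ratio differs from $h(a_0)$ by at most $\eps$. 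Since $\eps$ was arbitrary, the fact follows.

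Essentially all of the content is already in Lemma~\ref{ombasic}; the rest is bookkeeping. The points worth a little care are: that the $\delta$-ball in the product space projects to a subset of $\Lambda$ of vanishing diameter (this is precisely what the compatibility assumption on $\|(\cdot,\cdot)\|$ ensures --- and, by the open mapping theorem, complete norms with this property are comparable, so one could equally work with $\|u\|_X+|a|$); that all denominators stay strictly positive as $\delta\downarrow 0$, which needs $u_i\in E$, $a_i\in\intr(S)$, and $g(a_2)\neq 0$; and that Lemma~\ref{ombasic} is invoked under the standing mean-zero convention on $\mu_0$. I do not anticipate any genuine difficulty beyond this.
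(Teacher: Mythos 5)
Your proposal is correct and follows essentially the same route as the paper: the paper also sandwiches $\int_{B^\delta}f\,\dee(\mu_0\times\nu_0)$ between $(f(a_1)\mp\eps)\,(\mu_0\times\nu_0)(B^\delta(u_1,a_1))$ (and likewise for $g$), using continuity of $f,g$ and the compatibility assumption on the product norm, and then invokes Lemma \ref{ombasic} for the ratio of ball masses. Your factorisation into three explicit limits and your remark that $g(a_2)\neq 0$ is tacitly required are just slightly more careful bookkeeping of the same argument.
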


\begin{proof}
Let $\eps > 0$. Then by the continuity of $f$ and $g$, and the assumption on the norm on $X\times\Lambda$, there exists $\delta > 0$ such that
\begin{eqnarray*}
\frac{(f(a_1)-\eps)(\mu_0\times\nu_0)(B^\delta(u_1,a_1))}{(g(a_2)+\eps)(\mu_0\times\nu_0)(B^\delta(u_2,a_2))} &\leq \frac{\int_{B^\delta(u_1,a_1)}f(a)\,\mu_0(\dee u)\nu_0(\dee a)}{\int_{B^\delta(u_2,a_2)}g(a)\,\mu_0(\dee u)\nu_0(\dee a)}\\
&\leq \frac{(f(a_1)+\eps)(\mu_0\times\nu_0)(B^\delta(u_1,a_1))}{(g(a_2)-\eps)(\mu_0\times\nu_0)(B^\delta(u_2,a_2))}.
\end{eqnarray*}
The result now follows by the previous lemma.
\end{proof}

\begin{proof}[Proof of Theorem \ref{omthm}]
Let $(u_1,a_1), (u_2,a_2) \in E\times\intr(S)$. The case $\Phi\equiv 0$ is the result of Lemma \ref{ombasic}. Now proceeding analagously to \cite{map},
\begin{eqnarray*}
\fl \frac{\mathcal{J}^\delta(u_1,a_1)}{\mathcal{J}^\delta(u_2,a_2)} &= \frac{\int_{B^\delta(u_1,a_1)}\exp(-\Phi(u,a))\,\mu_0(\dee u)\nu_0(\dee a)}{\int_{B^\delta(u_2,a_2)}\exp(-\Phi(u,a))\,\mu_0(\dee u)\nu_0(\dee a)}\\
\fl &=\frac{\int_{B^\delta(u_1,a_1)}\exp(-\Phi(u,a)+\Phi(u_1,a_1))\exp(-\Phi(u_1,a_1))\,\mu_0(\dee u)\nu_0(\dee a)}{\int_{B^\delta(u_2,a_2)}\exp(-\Phi(u,a)+\Phi(u_2,a_2))\exp(-\Phi(u_2,a_2))\,\mu_0(\dee u)\nu_0(\dee a)}.
\end{eqnarray*}
Using Assumptions \ref{assump:map}(iv), we have that for any $(u,a),(v,b) \in X\times\Lambda$,
\begin{eqnarray*}
|\Phi(u,a) - \Phi(v,b)| \leq M_3(r,a)\|u-v\|_X + |\Phi(v,a) - \Phi(v,b)|
\end{eqnarray*}
where $r > \max\{\|u\|_X,\|v\|_X\}$. Now set
\begin{eqnarray*}
L_1 = \max_{|a| < |a_1| + \delta} M_3(\|u_1\|_X + \delta,a),\\
L_2 = \max_{|a| < |a_2| + \delta} M_3(\|u_2\|_X + \delta,a),
\end{eqnarray*}
which are finite due to the continuity assumption on $M_3$. Then 
\begin{eqnarray*}
\frac{\mathcal{J}^\delta(u_1,a_1)}{\mathcal{J}^\delta(u_2,a_2)} &\leq e^{\delta(L_1+L_2)}e^{-\Phi(u_1,a_1)+\Phi(u_2,a_2))}\\
&\hspace{1cm}\times\frac{\int_{B^\delta(u_1,a_1)}\exp(|\Phi(u_1,a) - \Phi(u_1,a_1)|)\,\mu_0(\dee u)\nu_0(\dee a)}{\int_{B^\delta(u_2,a_2)}\exp(-|\Phi(u_2,a) - \Phi(u_2,a_2)|)\,\mu_0(\dee u)\nu_0(\dee a)}.
\end{eqnarray*}
Note that both integrands are continuous in $a$, and so we may use the previous lemma. Taking $\limsup_{\delta\downarrow 0}$ of both sides gives
\begin{eqnarray*}
\limsup_{\delta\downarrow 0} \frac{\mathcal{J}^\delta(u_1,a_1)}{\mathcal{J}^\delta(u_2,a_2)} \leq e^{-I(u_1,a_1)+I(u_2,a_2)}.
\end{eqnarray*}
A similar method gives that the $\liminf_{\delta\downarrow 0}$ is bounded below by the RHS and so we have that for any $(u_1,a_2), (u_2,a_2) \in E\times\intr(S)$,
\begin{eqnarray*}
\lim_{\delta\downarrow 0}\frac{\mathcal{J}^\delta(u_1,a_1)}{\mathcal{J}^\delta(u_2,a_2)} = e^{I(u_2,a_2) - I(u_1,a_1)}.
\end{eqnarray*}
Noting that $I$ is continuous on $E\times S$, we see that $I$ agrees with the Onsager-Machlup functional on $E\times S$. Finally note that $I(u,a) = \infty$ on $(X\setminus E)\times\Lambda$ and $E\times(\Lambda\setminus S)$.
\end{proof}

\begin{remark}
\label{separate}
Note that the limit above is independent of the choice of norm used on the product space $X\times\Lambda$ when referring to the balls. If we use the norm given by
\begin{eqnarray*}
\|(x,a)\| = \max\{\|x\|_X,|a|\}
\end{eqnarray*}
then we have that
\begin{eqnarray*}
B^\delta(u,a) = B^\delta(u)\times B^\delta(a)
\end{eqnarray*}
and so may deduce that, for any choice of norm on $X\times\Lambda$,
\begin{eqnarray*}
\lim_{\delta\downarrow 0}\frac{\displaystyle(\mu_0\times\nu_0)(B^\delta(u_1,a_1))}{\displaystyle(\mu_0\times\nu_0)(B^\delta(u_2,a_2))} &= \lim_{\delta\downarrow 0}\frac{\displaystyle(\mu_0\times\nu_0)(B^\delta(u_1)\times B^\delta(a_1))}{\displaystyle(\mu_0\times\nu_0)(B^\delta(u_2)\times B^\delta(a_2))}\\
&=\lim_{\delta\downarrow 0}\frac{\mu_0(B^\delta(u_1))}{\mu_0(B^\delta(u_2))}\cdot \frac{\nu_0(B^\delta(a_1))}{\nu_0(B^\delta(a_2))}.
\end{eqnarray*}
This will be useful later for separating integrals.
\end{remark}

\begin{proof}[Proof of Theorem \ref{exist}]
We follow the idea of the proof of Theorem 5.4 in \cite{inverse}, which is based on \cite{dacorogna} and \cite{kinder}, and first show $I = \Phi + J + K$ is weakly lower semicontinuous on $E\times S$. Let $(u_n,a_n) \rightharpoonup (\bar{u},\bar{a})$ in $E\times S$. Since $S\subseteq \mathbb{R}^k$, weak convergence of the second component is equivalent to strong convergence. Since $\mu_0(X) = 1$, $E$ is compactly embedded in $X$ and so $u_n\rightarrow\bar{u}$ strongly in $X$. In the proof of existence of the posterior distribution we showed that $\Phi$ is continuous on $X\times\Lambda$, and so we deduce that $\Phi(u_n,a_n)\rightarrow\Phi(u,a)$. Hence $\Phi$ is weakly continuous on $E\times S$. The functional $J$ is weakly lower semicontinuous on $E$ and $K$ is continuous on $S$, and so $I$ is weakly lower semicontinuous on $E\times S$.

Now we show $I$ is coercive on $E\times S$. Since $E$ is compactly embedded in $X$ there exists a $C > 0$ such that
\begin{eqnarray*}
\|u\|^2_{X} \leq C\|u\|_E^2.
\end{eqnarray*}
Therefore by Assumption \ref{assump:map}(i) it follows that, for any $\varepsilon > 0$, there is an $M(\varepsilon) \in \mathbb{R}$ such that
\begin{eqnarray*}
I(u,a) \geq M(\varepsilon) + \left(\frac{1}{2} - C\varepsilon\right)\|u\|_E^2 + K(a).
\end{eqnarray*}
Since $K$ is bounded below\footnote{Recall in subsection \ref{ssec:priorgeo} we assumed $\rho$ to be continuous on the compact set $S$, and hence bounded.} by $-\log\|\rho\|_{\infty}$, we may incorporate this into the constant term $M(\varepsilon)$: 
\begin{eqnarray*}
I(u,a) \geq \widetilde{M}(\varepsilon) + \left(\frac{1}{2} - C\varepsilon\right)\|u\|_E^2.
\end{eqnarray*}
By choosing $\varepsilon = 1/4C$, we see that there is an $M \in \mathbb{R}$ such that, for all $(u,a) \in E\times S$,
\begin{eqnarray*}
I(u,a) \geq \frac{1}{4}\|u\|^2_E + M
\end{eqnarray*}
which establishes coercivity.

Now take a minimising sequence $(u_n,a_n)$ such that for any $\delta > 0$ there exists an $N_1 = N_1(\delta)$ such that
\begin{eqnarray*}
M \leq \bar{I} \leq I(u_n,a_n) \leq \bar{I} + \delta,\;\;\;\forall n\geq N_1.
\end{eqnarray*}
From the coercivity it can be seen that the sequence $(u_n,a_n)$ is bounded in $E\times S$. Since $E\times S$ is a closed subset of a Hilbert space, there exists $(\bar{u},\bar{a}) \in E\times S$ such that (possibly along a subsequence) $(u_n,a_n)\rightharpoonup (\bar{u},\bar{a})$ in $E\times S$. From the weak lower semicontinuity of $I$ it follows that, for any $\delta > 0$,
\begin{eqnarray*}
\bar{I} \leq I(\bar{u},\bar{a}) \leq \bar{I} + \delta.
\end{eqnarray*}
Since $\delta$ is arbitrary the first result follows.

Now consider the subsequence $(u_n,a_n)\rightharpoonup (\bar{u},\bar{a})$. The convergence of $a_n\rightarrow \bar{a}$ is strong, so all that needs to be checked is that $u_n\rightarrow\bar{u}$ strongly in $X$. This follows from exactly the same argument as in the proof of Theorem 5.4 in \cite{inverse} (taking $\bar{a}$ as the second parameter in $I$ and $\Phi$) and so the second result follows.
\end{proof}

Before proving Theorem \ref{thm:lim_is_map} we first collect some results on centred Gaussian measures from \cite{map}, specifically Lemmas 3.6, 3.7, and 3.9. For $u \in X$, let
\begin{eqnarray*}
\mathcal{J}^\delta_0(u) = \mu_0(B^\delta(u)).
\end{eqnarray*}

\begin{proposition}
\label{gausslemma}
\begin{enumerate}[(i)]
\item Let $\delta > 0$ and $u \in X$. Then we have
\begin{eqnarray*}
\frac{\mathcal{J}^\delta_0(u)}{\mathcal{J}^\delta_0(0)} \leq ce^{-\frac{a_1}{2}(\|u\|_{X} - \delta)^2}
\end{eqnarray*}
where $c = \exp\left(\frac{a_1}{2}\delta^2\right)$ and $a_1$ is a constant independent of $z$ and $\delta$.
\item Suppose that $\bar{u} \notin E$, $(u^\delta)_{\delta > 0}\subseteq X$ and $u^\delta$ converges weakly to $\bar{u} \in X$ as $\delta\downarrow 0$. Then for any $\varepsilon > 0$ there exists $\delta$ small enough such that
\begin{eqnarray*}
\frac{\mathcal{J}^\delta_0(u^\delta)}{\mathcal{J}^\delta_0(0)} < \varepsilon.
\end{eqnarray*}
\item Consider $(u^\delta)_{\delta > 0} \subseteq X$ and suppose that $u^\delta$ converges weakly and not strongly to $0$ in $X$ as $\delta\downarrow 0$. Then for any $\varepsilon > 0$ there exists $\delta$ small enough such that
\begin{eqnarray*}
\frac{\mathcal{J}^\delta_0(u^\delta)}{\mathcal{J}^\delta_0(0)} < \varepsilon.
\end{eqnarray*}
\end{enumerate}
\end{proposition}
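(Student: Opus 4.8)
The plan is to derive all three assertions from the corresponding facts about the centred Gaussian measure $\mu_0$ on the separable Banach space $X$ with Cameron--Martin space $E$; the geometric space $\Lambda$ plays no role here, and these statements are precisely \cite[Lemmas 3.6, 3.7 and 3.9]{map}, whose proofs transfer verbatim to our setting. I indicate the ingredients.

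For (i) I would first treat the case $\|u\|_X \le 2\delta$, in which the exponent $\tfrac{a_1}{2}\delta^2 - \tfrac{a_1}{2}(\|u\|_X-\delta)^2$ is non-negative; here the bound is immediate from Anderson's inequality, since $\mu_0$ is a centred Gaussian and $B^\delta(0)$ is convex and symmetric, so $\mathcal{J}^\delta_0(u) = \mu_0(B^\delta(0)+u) \le \mu_0(B^\delta(0)) = \mathcal{J}^\delta_0(0)$. When $\|u\|_X > 2\delta$ I would approximate $u$ by a Cameron--Martin element $h$ with $\|u-h\|_X$ arbitrarily small (possible since $\mu_0$ is non-degenerate, so $E$ is dense in $X$), use the inclusion $B^\delta(u)\subseteq B^{\delta+\eta}(h)$, and then run the Cameron--Martin plus symmetrisation argument from the proof of Lemma \ref{ombasic} --- invoking, as there, the correlation inequality of \cite{sidak} --- to obtain $\mathcal{J}^{\delta+\eta}_0(h) \le \exp\!\big(-\tfrac12\|h\|_E^2 + (\delta+\eta)\,\psi(h)\big)\,\mathcal{J}^{\delta+\eta}_0(0)$, with $\psi(h)$ the supremum over the unit ball of the continuous part of $\langle h,\cdot\rangle_E$. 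The continuous embedding $E\hookrightarrow X$ turns $\|h\|_E^2$ into a positive multiple of $\|h\|_X^2$; letting $\eta\downarrow0$ and collecting constants then produces a bound of the claimed form, with $a_1$ a positive constant depending only on $\mu_0$.

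For (ii) and (iii) I would reduce to finite dimensions via a Karhunen--Loève expansion $x=\sum_k x_k\varphi_k$, under which $\mu_0=\bigotimes_k N(0,\lambda_k)$ and $\|x\|_E^2=\sum_k\lambda_k^{-1}x_k^2$. With $P_N$ the projection onto the first $N$ coordinates, the inclusion $B^\delta(u^\delta)\subseteq P_N^{-1}\big(B^\delta(P_Nu^\delta)\big)$, together with a matching lower bound for $\mathcal{J}^\delta_0(0)$ built from the complementary coordinates, lets one apply part (i) inside $P_NX$ and bound $\mathcal{J}^\delta_0(u^\delta)/\mathcal{J}^\delta_0(0)$, up to $N$-dependent constants, by $\exp\!\big(-\tfrac{c_N}{2}(\|P_Nu^\delta\|-\delta)^2\big)$. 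For (ii), weak convergence gives $P_Nu^\delta\to P_N\bar{u}$ as $\delta\downarrow0$ for each fixed $N$, while $\bar{u}\notin E$ forces the finite-dimensional Cameron--Martin norms of $P_N\bar{u}$ to diverge as $N\to\infty$; taking $N$ large and then $\delta$ small makes the ratio less than $\varepsilon$. For (iii), weak-but-not-strong convergence to $0$ means $\|u^\delta\|_X\ge\beta>0$ along a subsequence although $\|P_Nu^\delta\|_X\to0$ for each fixed $N$, so $\|(I-P_N)u^\delta\|_X$ remains bounded below; applying the finite-dimensional estimate to $I-P_N$ rather than $P_N$, and using that $(I-P_N)_\#\mu_0$ concentrates at $0$ as $N\to\infty$, drives the ratio to $0$.

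The step I expect to be the main obstacle is the order-of-limits bookkeeping in (ii) and (iii): the constants inherited from part (i) degrade as the truncation dimension $N$ increases, so one must let $N\to\infty$ and $\delta\downarrow0$ in the correct order and verify that the finite-dimensional small-ball ratios do not deteriorate too fast along the way. This is precisely the point carefully handled in \cite{map}, whose argument I would follow.
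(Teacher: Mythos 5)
Your proposal matches the paper exactly: the paper gives no proof of this proposition, stating only that it collects Lemmas 3.6, 3.7 and 3.9 of \cite{map} for the centred Gaussian factor $\mu_0$, which is precisely the reduction you make. Your supplementary sketches of how those lemmas are proved go beyond what the paper records, but the essential step --- identifying the statement as a verbatim import from \cite{map}, with $\Lambda$ playing no role --- is the same.
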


\begin{proof}[Proof of Theorem \ref{thm:lim_is_map}]
\begin{enumerate}[(i)]
\item We first show $(u^\delta,a^\delta)$ is bounded in $X\times\Lambda$. The boundedness of the second component is clear since $S$ is bounded, so it suffices to show that $(u^\delta)$ is bounded in $X$. This is proved in the same way as in Theorem 3.5 in \cite{map}.

In the proof of existence of the posterior measure, Theorem \ref{thm:post_exist}, we show that if $r> 0$ and $\|u\|_X,|a| < r$, then there exists $K(r) > 0$ such that $\Phi(u,a) \leq K(r)$. Letting $c = e^M e^{-K(1)} > 0$, it follows in the same was as \cite{map} that, given any $a \in S$, for $\delta < 1$ we have
\begin{eqnarray*}
\mathcal{J}^\delta_0(u^\delta,a) \geq c \mathcal{J}^\delta_0(0,a).
\end{eqnarray*}
Suppose that $(u^\delta)$ is not bounded in $X$ so that for any $R>0$ there exists $\delta_R$ such that $\|u^{\delta_R}\|_{X} > R$, with $\delta_R\rightarrow 0$ as $R\rightarrow\infty$. Then the above bound says that
\begin{eqnarray*}
\frac{\mathcal{J}^\delta_0(u^\delta,a)}{\mathcal{J}^\delta_0(0,a)} = \frac{\mu_0(B^\delta(u^\delta))}{\mu_0(B^\delta(0))}\cdot\frac{\nu_0(B^\delta(a))}{\nu_0(B^\delta(a))} \geq c.
\end{eqnarray*}
This contradicts Proposition \ref{gausslemma}(i) above. Therefore there exists $R,\delta_R > 0$ such that
\begin{eqnarray*}
\|(u^\delta,a^\delta)\|_{X\times\Lambda} \leq R\;\;\;\text{for any }\delta < \delta_R.
\end{eqnarray*}
Hence there exist $(\bar{u},\bar{a}) \in X\times\Lambda$ and a subsequence of $(u^\delta,a^\delta)_{0<\delta<\delta_R}$ which converges weakly in $X\times\Lambda$ to $(\bar{u},\bar{a})$ as $\delta\downarrow 0$. For simplicity of notation we still call this subsequence $(u^\delta,a^\delta)$.

We now show that $(u^\delta,a^\delta)$ converges strongly to an element of $E\times S$. We first show that $(\bar{u},\bar{a}) \in X\times S$.

Note that any limit point of $a^\delta$ must lie in $S$. Suppose it did not, and a limit point was $a^* \notin S$. Then there exists $\delta^\dagger > 0$ such that along a subsequence converging to $a^*$, $\delta < \delta^\dagger$ implies $a^\delta \notin S$ since $S$ is closed. For $\delta < \frac{1}{2}\mathrm{dist}(a^*,S)\wedge\delta^\dagger$ we then have $B^\delta(a^\delta)\cap S = \varnothing$. In particular $\nu_0(B^\delta(a^\delta)) = 0$ for all such $\delta$, which in turn implies $\mathcal{J}^{\delta}(u,a^\delta) = 0$ for any $u \in X$ contradicting the definition of $a^\delta$. It follows that we must have $\bar{a} \in S$.

We need to show $\bar{u} \in E$. From the definition of $(u^\delta,a^\delta)$ and the bounds on $\Phi$ we have for $\delta$ small enough and some\footnote{Remark \ref{separate} tells us that we can separate the integrals in the limit $\delta\downarrow 0$.} $\alpha$ close to 1,
\begin{eqnarray*}
1 \leq \frac{\mathcal{J}^\delta(u^\delta,0)}{\mathcal{J}^\delta(0,0)} &\leq \alpha\frac{e^{-M}\int_{B^\delta(u^\delta)}\mu_0(\dee u)\int_{B^\delta(0)}\nu_0(\dee a)}{e^{-K(1)}\int_{B^\delta(0)}\mu_0(\dee u)\int_{B^\delta(0)}\nu_0(\dee a)}\\
&= \alpha e^{K(1)-M}\frac{\int_{B^\delta(u^\delta)}\mu_0(\dee u)}{\int_{B^\delta(0)}\mu_0(\dee u)}.
\end{eqnarray*}
We use Proposition \ref{gausslemma}(ii). Supposing $\bar{u}\notin E$, for any $\varepsilon > 0$ there exists $\delta$ small enough such that
\begin{eqnarray*}
\frac{\int_{B^\delta(u^\delta)}\mu_0(\dee u)}{\int_{B^\delta(0)}\mu_0(\dee u)} < \varepsilon.
\end{eqnarray*}
We may choose $\varepsilon = \frac{1}{2\alpha}e^{M-K(1)}$ to deduce that there exists $\delta$ small enough such that
\begin{eqnarray*}
1 \leq \frac{\mathcal{J}^\delta(u^\delta,0)}{\mathcal{J}^\delta(0,0)} < \frac{1}{2}
\end{eqnarray*}
which is a contradiction, and so $\bar{u} \in E$.

Knowing that $(\bar{u},\bar{a}) \in E\times S$ we now show that the convergence is strong. Any convergence of the second component will be strong and so we just need to show that $u^{\delta}\rightarrow \bar{u}$ strongly in $X$. Suppose the convergence is not strong, then we may use Proposition \ref{gausslemma}(iii) on the sequence $u^\delta - \bar{u}$. The same choice of $\varepsilon$ as above leads to the same contradiction, and so we deduce that $\bar{u}\rightarrow\bar{u}$ strongly in $X$ and the first result is proved.

\item We now show that $(\bar{u},\bar{a})$ is a MAP estimator and minimises $I$. As in \cite{map}, and the proof of Theorem \ref{omthm}, we can use Assumptions \ref{assump:map}(iii) to see that
\begin{eqnarray*}
\frac{\mathcal{J}^\delta(u^\delta,a^\delta)}{\mathcal{J}^\delta(\bar{u},\bar{a})} &\leq e^{\delta(L_1+L_2)}e^{-\Phi(u^\delta,a^\delta)+\Phi(\bar{u},\bar{a}))}\\
&\hspace{1cm}\times\frac{\int_{B^\delta(u^\delta,a^\delta)}\exp(|\Phi(u^\delta,a) - \Phi(u^\delta,a^\delta)|)\,\mu_0(\dee u)\nu_0(\dee a)}{\int_{B^\delta(\bar{u},\bar{a})}\exp(-|\Phi(\bar{u},a) - \Phi(\bar{u},\bar{a})|)\,\mu_0(\dee u)\nu_0(\dee a)}
\end{eqnarray*}
where
\begin{eqnarray*}
L_1 &= \max_{|a|\leq |a_1|+\delta} M_3(\|u^\delta\|_{X} + \delta,a),\\
L_2 &= \max_{|a|\leq |a_2|+\delta} M_3(\|\bar{u}\|_{X}+\delta,a).
\end{eqnarray*}
Therefore using the continuity of $\Phi$, as shown in the proof of existence of the posterior distribution, and that $(u^\delta,a^\delta)\rightarrow(\bar{u},\bar{a})$ strongly in $X\times\Lambda$,
\begin{eqnarray*}
\limsup_{\delta\downarrow 0}\frac{\mathcal{J}^\delta(u^\delta,a^\delta)}{\mathcal{J}^\delta(\bar{u},\bar{a})} \leq \limsup_{\delta\downarrow 0} \frac{\int_{B^\delta(u^\delta,a^\delta)}\mu_0(\dee u)\nu_0(\dee a)}{\int_{B^\delta(\bar{u},\bar{a})}\mu_0(\dee u)\nu_0(\dee a)}.
\end{eqnarray*}
Suppose $u^\delta$ is not bounded in $E$, or if it is, it only converges weakly (and not strongly) in $E$. Then $\|\bar{u}\|_E < \liminf_{\delta\downarrow 0}\|u^\delta\|_E$, and hence for small enough $\delta$, $\|\bar{u}\|_E < \|u^\delta\|_E$. Therefore, since $\mu_0$ is centered and $\|u^\delta - \bar{u}\|_{X}\rightarrow 0$, $|a^\delta - \bar{a}|\rightarrow 0$,
\begin{eqnarray*}
\fl\limsup_{\delta\downarrow 0} \frac{\int_{B^\delta(u^\delta,a^\delta)}\mu_0(\dee u)\nu_0(\dee a)}{\int_{B^\delta(\bar{u},\bar{a})}\mu_0(\dee u)\nu_0(\dee a)} &= \limsup_{\delta\downarrow 0} \frac{\int_{B^\delta(u^\delta)}\mu_0(\dee u)\int_{B^\delta(a^\delta)}\nu_0(\dee a)}{\int_{B^\delta(\bar{u})}\mu_0(\dee u)\int_{B^\delta(\bar{a})}\nu_0(\dee a)}\\
&\leq \limsup_{\delta\downarrow 0} \frac{\int_{B^\delta(u^\delta)}\mu_0(\dee u)}{\int_{B^\delta(\bar{u})}\mu_0(\dee u)}\cdot\limsup_{\delta\downarrow 0}\frac{\int_{B^\delta(a^\delta)}\nu_0(\dee a)}{\int_{B^\delta(\bar{a})}\nu_0(\dee a)}\\
&\leq \limsup_{\delta\downarrow 0}\frac{\int_{B^\delta(a^\delta)}\nu_0(\dee a)}{\int_{B^\delta(\bar{a})}\nu_0(\dee a)}\\
&= \limsup_{\delta\downarrow 0}\frac{\frac{1}{|B^\delta(a^\delta)|}\int_{B^\delta(a^\delta)}\rho(a)\,\dee a}{\frac{1}{|B^\delta(\bar{a})|}\int_{B^\delta(\bar{a})}\rho(a)\,\dee a}\\
&= 1.
\end{eqnarray*}
The final equality above follows from the continuity of the integrand and the fact that $|a^\delta-\bar{a}|\rightarrow 0$: both the numerator and the denominator tend to $\rho(\bar{a})$.

Since by definition of $(u^\delta,a^\delta)$, $\mathcal{J}^\delta(u^\delta,a^\delta) \geq \mathcal{J}^\delta(\bar{u},\bar{a})$ and hence
\begin{eqnarray*}
\liminf_{\delta\downarrow 0}\frac{\mathcal{J}^\delta(u^\delta,a^\delta)}{\mathcal{J}^\delta(\bar{u},\bar{a})} \geq 1,
\end{eqnarray*}
this implies that
\begin{eqnarray}
\label{maplimit}
\lim_{\delta\downarrow 0} \frac{\mathcal{J}^\delta(u^\delta,a^\delta)}{\mathcal{J}^\delta(\bar{u},\bar{a})} = 1.
\end{eqnarray}

In the case where $(u^\delta)$ converges strongly to $\bar{u}$ in $E$, we see from the proof of Lemma \ref{ombasic} that we have
\begin{eqnarray*}
\fl e^{\frac{1}{2}\|\bar{u}\|_E^2 - \frac{1}{2}\|u^\delta\|_E^2 - M\delta}\frac{(\mu_0\times\nu_0)(B^\delta(0,a^\delta))}{(\mu_0\times\nu_0)(B^\delta(0,\bar{a}))} &\leq \frac{(\mu_0\times\nu_0)(B^\delta(u^\delta,a^\delta))}{(\mu_0\times\nu_0)(B^\delta(\bar{u},\bar{a}))}\\
&\leq e^{\frac{1}{2}\|\bar{u}\|_E^2 - \frac{1}{2}\|u^\delta\|_E^2 + M\delta}\frac{(\mu_0\times\nu_0)(B^\delta(0,a^\delta))}{(\mu_0\times\nu_0)(B^\delta(0,\bar{a}))}.
\end{eqnarray*}
Since we have $u^\delta\rightarrow\bar{u}$ strongly in $E$ we have in particular that $\|u^\delta\|_E\rightarrow\|\bar{u}\|_E$. It follows that $e^{\frac{1}{2}\|\bar{u}\|_E^2 - \frac{1}{2}\|u^\delta\|_E^2 \pm M\delta} \rightarrow 1$ as $\delta\downarrow 0$. Now using the continuity of $\rho$ and the fact that $|a^\delta - \bar{a}|\rightarrow 0$, an argument similar to that in the proof of Lemma \ref{ombasic} shows that
\begin{eqnarray*}
\lim_{\delta\downarrow 0} \frac{(\mu_0\times\nu_0)(B^\delta(0,a^\delta)}{(\mu_0\times\nu_0)(B^\delta(0,\bar{a}))} = 1.
\end{eqnarray*}
We therefore deduce that
\begin{eqnarray*}
\lim_{\delta\downarrow 0} \frac{\int_{B^\delta(u^\delta,a^\delta)}\mu_0(\dee u)\nu_0(\dee a)}{\int_{B^\delta(\bar{u},\bar{a})}\mu_0(\dee u)\nu_0(\dee a)} = 1
\end{eqnarray*}
and (\ref{maplimit}) follows again. Therefore $(\bar{u},\bar{a})$ is a MAP estimator of the measure $\mu$.

The proof that $(\bar{u},\bar{a})$ minimises $I$ is identical to that in the proof of Theorem 3.5 in \cite{map}.
\end{enumerate}
\end{proof}

\ack{The authors would like to thank Shiwei Lan and Claudia Schillings for helpful discussions on the adjoint method used in the minimisation procedure. The authors would also like to thank Marco Iglesias for more general discussions. MMD is supported by EPSRC grant EP/H023364/1 as part of the MASDOC DTC at the University of Warwick. AMS is supported by EPSRC and ONR. This research utilised Queen Mary's MidPlus computational facilities, supported by QMUL Research-IT and funded by EPSRC grant EP/K000128/1.\\}

\section*{References}
\bibliographystyle{myplain}
\bibliography{references}

\end{document}